\def\x{\mathbf{x}}
\def\y{\mathbf{y}}
\def\bx{\overline{\mathbf{x}}}
\def\by{\overline{\mathbf{y}}}
\def\z{\mathbf{z}}
\def\bz{{\overline{\mathbf{z}}}}
\def\q{\mathbf{q}}
\def\bq{{\overline{\mathbf{q}}}}
\def\w{\mathbf{w}}
\newcommand{\RR}{\mathbb{R}}
\newcommand{\ZZ}{\mathbb{Z}}
\newcommand{\nul}{\mathbf{0}}
\DeclareMathOperator{\rank}{rank}
\DeclareMathOperator{\dem}{dem}
\DeclareMathOperator{\Ker}{ker}
\DeclareMathOperator{\im}{im}
\DeclareMathOperator{\CoKer}{coker}
\DeclareMathOperator{\Circ}{Circ}
\DeclareMathOperator{\cart}{\Box}
\DeclareMathOperator{\Su}{S}
\theoremstyle{plain}
\newtheorem{theorem}{Theorem}
\newtheorem{lemma}[theorem]{Lemma}
\newtheorem{corollary}[theorem]{Corollary}
\newtheorem{proposition}[theorem]{Proposition}
\theoremstyle{definition}
\newtheorem{definition}[theorem]{Definition}
\newtheorem{example}[theorem]{Example}
\title{Nut digraphs}
\author[1,2,7]{Nino Ba\v{s}i\'{c}}
\author[3]{Patrick~W.~Fowler}
\author[4,5]{Maxine M. McCarthy}
\author[6,7]{Primo\v{z} Poto\v{c}nik}
\affil[1]{FAMNIT, University of Primorska, Koper, Slovenia}
\affil[2]{IAM, University of Primorska, Koper, Slovenia}
\affil[3]{Chemistry, School of Mathematical and Physical Sciences, \linebreak University of Sheffield, Sheffield S3 7HF, UK}
\affil[4]{Physics and Astronomy, School of Mathematical and Physical Sciences, \linebreak University of Sheffield, Sheffield S3 7RH, UK}
\affil[5]{Max Planck Institute for the Science of Light, Staudtstra{\ss}e 2, 91058 Erlangen, Germany}
\affil[6]{Faculty of Mathematics and Physics, University of Ljubljana, Slovenia}
\affil[7]{Institute of Mathematics, Physics and Mechanics, Ljubljana, Slovenia}
\begin{document}

\maketitle

\begin{abstract}
A \emph{nut graph} is a simple graph whose kernel is spanned by a single full vector (i.e.\ the adjacency matrix has a single zero eigenvalue and all non-zero 
kernel eigenvectors have no zero entry). 
We classify generalisations of nut graphs to nut digraphs:
a digraph whose kernel (resp.\ co-kernel) is spanned by a full vector is \emph{dextro-nut} (resp.\ \emph{laevo-nut});
a \emph{bi-nut} digraph is both laevo- and dextro-nut;
an \emph{ambi-nut} digraph is a bi-nut digraph where  kernel and co-kernel are spanned by the same vector;
a digraph is \emph{inter-nut} if the intersection of the kernel and co-kernel is spanned by a full vector.
It is known that a nut graph is connected, leafless and non-bipartite. It is shown here that an ambi-nut digraph is
strongly connected, non-bipartite (i.e.\ has a non-bipartite underlying graph) and has minimum in-degree and minimum 
out-degree of at least $2$. Refined notions of core and core-forbidden vertices apply to singular digraphs. Infinite families of
nut digraphs and systematic coalescence, cross-over and multiplier constructions are introduced.
Relevance of nut digraphs to topological physics is discussed.

\vspace{\baselineskip}
\noindent
\textbf{Keywords:} Nut graph, core graph, nullity, directed graph, nut digraph,
dextro-nut, laevo-nut, bi-nut, ambi-nut, inter-nut, dextro-core vertex, laevo-core vertex, graph spectra.

\vspace{\baselineskip}
\noindent
\textbf{Math.\ Subj.\ Class.\ (2020):} 
05C50, 
05C20, 
05C92 
\end{abstract}

\section{Introduction}

\emph{Nut graphs}~\cite{SciGut1998} are the graphs that have a one-dimensional nullspace where the non-trivial
kernel eigenvector $x = [x_1\, \ldots\, x_n ]^\intercal \in \Ker A (G)$ is 
\emph{full} (i.e.\ it  has no zero entries).
This class of graphs is the subject of an extensive mathematical literature~\cite{SciGut1998, ScirihaCoalescence, Basic2021regular, Basic2020, ScMaxCorSzSing09, Jan2020, GPS, GutmanSciriha-MaxSing, Sc1998,
CoolFowlGoed-2017, Sc2007, Damnjanovic2022a, DamnjanStevan2022, Damnjanovic2022b, Damnjanovic2022c, NutOrbitPaper, GroupNutPaper, DegreeNutPaper, DaBaPiZi2024, Sciriha2021, ScOnRnkGr99,QuarticBicirculantNuts,CubicPolyNuts}. 
Nut graphs also feature in mathematical chemistry in applications of graph theory to 
models of electron distribution, radical reactivity and ballistic conduction in molecular $\pi$ systems \cite{ScFo2008, nuttybook, PWF_JCP_2014, feuerpaper, Fowler2013}.
The definition has been extended from graphs to signed graphs \cite{Basic2020singular}.
 
Recently, after a talk on nut graphs given by one of us, 
a member of the audience raised the question of whether analogues could be found amongst the directed graphs. 
In other words, the questioner wanted to know whether \emph{nut digraphs} exist.  
This proved to be a fruitful line of enquiry.
The (undirected) nut graphs are connected,
leafless (have no vertices of degree~$1$),  non-bipartite, 
not edge-transitive, and non-trivial examples exist for all orders
$n \geq 7$~\cite{GutmanSciriha-MaxSing, NutOrbitPaper}. 
A natural follow-up question would be on how these properties transfer to non-trivial nut digraphs.
The present note reports our work to provide a full answer to both questions, which has entailed
consideration of alternative definitions of nut digraphs, cataloguing small examples and
devising systematic constructions for large nut digraphs.

\section{Definitions}
\label{sec:2}

Nut graphs sit within the larger class of core graphs. A \emph{core graph} is a singular graph for which every vertex has a non-zero entry in some non-trivial kernel eigenvector. 
It is traditional to exclude $K_1$ from both nut and core graph classes, discounting it as a trivial case~\cite{SciGut1998}.  Nut graphs are then
precisely the core graphs of nullity $1$.

 A {\em digraph} is a pair $(V,\to)$, where $V$ is a finite non-empty set of vertices and $\to$ is an
 arbitrary binary relation on $V$. The set of vertices of a digraph $G$ is  denoted by $V(G)$ and
 the corresponding relation $\to$ is denoted by $\to_G$.
If $u\to_G v$ for some $u,v \in V(G)$, then we say that $u$ is an {\em in-neighbour} of $v$, 
that $v$ is an {\em out-neighbour} of $u$ and that $(u,v)$ is an {\em arc} (also called a {\em directed edge})
of $G$. The set of all arcs of $G$ will sometimes be denoted by $E(G)$.
For a vertex $u\in V(G)$, let $G^+(u) $ denote the set of all out-neighbours and $G^-(u)$ the set of all in-neighbours of $u$.
The digraph $G^R$ whose vertex set is $V(G)$ and where $u \to_{G^R} v$ if and only if $v\to_G u$ is 
called the \emph{reverse digraph} 
(sometimes known as the \emph{opposite digraph} or \emph{converse digraph}) 
of $G$. We denote the in- and out-degrees of a vertex $u \in V(G)$ by $d^{-}(u)$ and $d^{+}(u)$, respectively.
A vertex $u$ is called a \emph{source} if $d^{-}(u) = 0$ and $d^{+}(u) > 0$. Similarly, a vertex $u$ is called a \emph{sink} if $d^{+}(u) = 0$ and $d^{-}(u) > 0$.
We use $d(u)$ to denote $d^{-}(u) + d^{+}(u)$.
The minimal in- and out-degrees of a digraph $G$ will be denoted by $\delta^{-}(G)$ and $\delta^{+}(G)$, respectively.
The minimal degree of an (undirected) graph $\Gamma$ will be denoted $\delta(\Gamma)$.

If the relation $\to_G$ is irreflexive (in the sense that no $v\in V(G)$ satisfies $v\to_G v$) and symmetric (in the sense that for all $u,v\in V(G)$,
 $u\to_G v$ implies $v\to_G u$),
 then we call $G$ a (simple) graph and the relation $\to_G$ becomes the usual adjacency relation.
Similarly, if $\to$ is asymmetric (in the sense that for no pair of possibly equal vertices $u$ and $v$ of $G$
do both $u\to_G v$ and $v\to_G u$ hold),
 then $G$ is said to be an \emph{oriented} (simple) \emph{graph}, as it can be obtained by orienting each edge of a simple graph.
The {\em underlying graph} of a directed graph $G$ is defined as the digraph with the same vertex-set as $G$ and with 
the adjacency relation being the symmetric closure of the adjacency relation of $G$. Note that the underlying graph is always a simple graph.

Our definition of bipartite digraphs is the same as the one used by Brualdi \cite{Brualdi2010}.
We will call a digraph $G$ \emph{bipartite} if its vertex-set $V(G)$ can be partitioned into subsets $V_1$ and $V_2$, such that for every arc $(u, v) \in E(G)$,
either $u \in V_1$ and $v \in V_2$ or $v \in V_1$ and $u \in V_2$. 
Let $U$ and $W$ be two non-empty disjoint sets of vertices of a digraph $G$.
Then $G[U]$ is the digraph with vertex-set $U$ with $u_1 \to_{G[U]} u_2$ if and only if $u_1 \to_{G} u_2$, i.e. the digraph induced on $U$.
Similarly,
$G[U,W]$ is the digraph with vertex-set $U \cup W$ and with $v_1\to_{G[U,W]} v_2$ if and only if $|\{v_1,v_2\} \cap U| = |\{v_1,v_2\} \cap W| = 1$ and $v_1\to_G v_2$.
Note that the digraph $G[U,W]$ is bipartite.

 A digraph $G$ can be viewed as a linear operator on
the $|V(G)|$-dimensional vector space $\RR^V$ of all functions $\x \colon V(G) \to \RR$, where the action of $G$ is given by
\begin{equation}
\label{eq:operator}
G\colon \x \mapsto (u \mapsto \sum_{v\in G^+(u)} \x(v)).
\end{equation}
An element of $\RR^V$ is {\em full} if $\x(v) \not = 0$ for every $v\in V$.
For simplicity, we write $\x_v$ instead of $\x(v)$, and when the vertices of $G$ are in some order, say $v_0, v_1, \ldots, v_{n - 1}$, then we identify
a function $\x \in \RR^V$ with the column vector 
$[\begin{matrix}\x_{v_0} & \x_{v_1} & \ldots & \x_{v_{n - 1}}\end{matrix}]^\intercal$.
Furthermore, for $v_i\in V$, let $\chi_i \in \RR^V$ denote the characteristic function of $v$, and observe that $\{\chi_i : i\in \ZZ_n\}$ is a basis for $\RR^V$.
The matrix that corresponds to $G$ in this basis is then precisely the adjacency matrix $A(G)$ of $G$, for which $A(G)_{ij}=1$
if and only if $v_i \to_G v_j$ (and is $0$ otherwise). Note that
\begin{equation}
A(G^R) = A(G)^\intercal.
\end{equation}
The kernel $\Ker G$ of the digraph $G$, viewed as an operator on $\RR^V$, consists of all the elements $\x\in \RR^V$ satisfying
\begin{equation}
\label{eq:zerosum}
\sum_{u\in G^+(v)}\x(u) = 0\> \hbox{ for all } v\in V(G).
\end{equation}
Similarly, the co-kernel $\CoKer G$ of the digraph $G$, defined as the kernel of the operator $G^R$, consists of all the elements $\x\in \RR^V$ such that 
\begin{equation}
\label{eq:zerosumn}
\sum_{u\in G^-(v)}\x(u) = 0\>  \hbox{ for all } v\in V(G).
\end{equation}

If $\x\in \RR^V$ is identified with the corresponding vector in $\RR^n$, then 
$\Ker G = \{ \x \in \RR^n : A(G) \x = \nul\}$ and $\CoKer G = \Ker G^R = \{ \x \in \RR^n : A(G)^\intercal \x = \nul\} = \{ \x \in \RR^n : \x^\intercal A(G) = \nul^\intercal\}$.
In other words, the kernel and the cokernel of a digraph are then precisely the kernel and the cokernel of the adjacency matrix of $G$.  
The nullity of a digraph $G$, denoted $\eta(G)$,
 is defined as $\eta(G) = \dim \Ker G$. 
Note that $\eta(G) = \dim \Ker G = \dim \CoKer G$.
A digraph $G$ is called \emph{singular} if $\eta(G) > 0$ and \emph{non-singular} otherwise. In other words, $G$ is non-singular if it is invertible as an operator in the sense of \eqref{eq:operator}.
Recall that for an undirected graph, its adjacency matrix is symmetric, and the algebraic multiplicity of an eigevalue matches its geometric multiplicity. In particular, the nullity of an
undirected graph equals the algebraic multiplicity of the $0$ eigenvalue. In the directed case, this is no longer true. With the definition adopted in this paper, the nullity of a directed graph is the geometric multiplicity of the $0$ eigenvalue,
but not necessarily its algebraic multiplicity.

We are now in a position to discuss plausible generalisations of the notion of a nut graph to digraphs.
 Each of the generalisations that we propose below will have the property that when applied to a digraph which is a graph (that is, the adjacency relation of which
 is symmetric) it will coincide with the standard definition of a nut graph. 

\begin{definition}
A digraph is called a \emph{dextro-nut} provided it has a one-dimensional kernel spanned by a full vector. Similarly, a digraph is a 
\emph{laevo-nut} if its co-kernel is one-dimensional and spanned by a full vector.
\end{definition}

\noindent
Observe that $G$ is a laevo-nut digraph if and only if its reverse $G^R$ is a dextro-nut digraph. Note that the conditions (\ref{eq:zerosum}) and (\ref{eq:zerosumn}) 
are refinements of the usual \emph{ local condition} for kernel eigenvectors in the setting of undirected graphs;
see Figure~\ref{fig:schematicLocal}.

\begin{figure}[!htbp]
\centering
\begin{tikzpicture}[scale=1]
\tikzstyle{vertex}=[draw,circle,font=\scriptsize,minimum size=5pt,inner sep=1pt,fill=orange!80!white]
\tikzstyle{edge}=[draw,thick,decoration={markings,mark=at position 0.5 with {\arrow{Stealth}}},postaction=decorate]
\node[vertex,label={180:$b_1$}] (b1) at (-1, 0.6) {};
\node[vertex,label={180:$b_2$}] (b2) at (-1, 0) {};
\node[fill=none,draw=none] (dots1) at (-1, -0.6) {$\cdots$};
\node[vertex,label={180:$b_\ell$}] (bl) at (-1, -1.2) {};
\node[vertex,label={0:$a_1$}] (a1) at (4, 0.6) {};
\node[vertex,label={0:$a_2$}] (a2) at (4, 0) {};
\node[fill=none,draw=none] (dots2) at (4, -0.6) {$\cdots$};
\node[vertex,label={0:$a_k$}] (ak) at (4, -1.2) {};
\node[vertex,label={-90:$v$}] (v) at (1.5, -0.3) {};
\path[edge] (b1) to[bend left=20] (v);
\path[edge] (b2) -- (v);
\path[edge] (bl) to[bend right=20] (v);
\path[edge] (v) to[bend left=20] (a1);
\path[edge] (v) -- (a2);
\path[edge] (v)  to[bend right=20] (ak);
\draw [thick, decorate, decoration={brace, amplitude=5pt}]  (-1.8, -1.5) -- (-1.8, 0.9) node[pos=0.5,left=6pt,black]{$G^-(v)$};
\draw [thick, decorate, decoration={brace, amplitude=5pt}]  (4.8, 0.9) -- (4.8, -1.5) node[pos=0.5,right=6pt,black]{$G^+(v)$};
\end{tikzpicture}
\caption{Local conditions in nut digraphs. Entries on $G^+(v)$ of $\x \in \Ker G$ are labeled $a_1, \ldots, a_k$ and entries on $G^-(v)$ of $\y \in \CoKer G$ are labeled $b_1, \ldots, b_\ell$.
For a dextro-nut digraph, the local condition is $a_1 + \cdots + a_k = 0$ and for a laevo-nut $b_1 + \cdots + b_\ell = 0$. For the underlying graph the local
condition would simply be the sum of the two: $a_1 + \cdots + a_k + b_1 + \cdots + b_\ell = 0$.}
\label{fig:schematicLocal}
\end{figure}
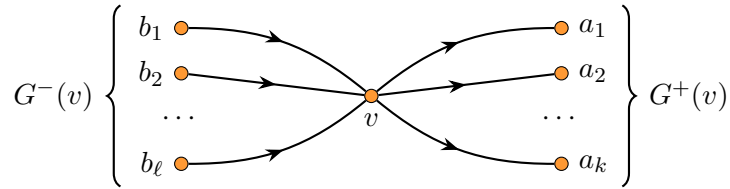

\begin{definition}
A digraph which is both a dextro-nut and a laevo-nut is a {\em bi-nut} digraph. 
\end{definition}

\begin{definition}
A bi-nut digraph whose kernel and the co-kernel are spanned by \emph{the same vector}
is an \emph{ambi-nut} digraph.
\end{definition}

\noindent
Both  kernel and co-kernel of a dextro-nut digraph are one-dimensional. Hence, if a dextro-nut digraph is not a bi-nut digraph, 
the vector that spans its co-kernel must contain some zero entries. Likewise, \emph{mutatis mutandis}, for laevo-nut digraphs.

Finally, one might consider digraphs for which the intersection of the kernel and the co-kernel is one-dimensional and is spanned by a full vector.
Such an object could be called an {\em inter-nut} digraph. 

\begin{definition}
A digraph $G$ is an \emph{inter-nut} digraph if $\Ker G \cap \CoKer G$  is one-dimensional and spanned by a full vector.
\end{definition}

\noindent
Note that a digraph is an ambi-nut if and only it is both
a bi-nut and an inter-nut. Moreover, an inter-nut that is a dextro-nut is automatically also a laevo-nut and therefore an ambi-nut. Figure~\ref{fig:nutUniverse} gives an overview of the  relations between these nut digraph classes.

In this paper, we will be concerned mainly with the strongest of these generalisations of nut graphs, namely with the ambi-nut digraphs.
However, we will prove results in as general a form as possible, thus providing results for the weaker forms of nut digraph \emph{en passant}.
\medskip

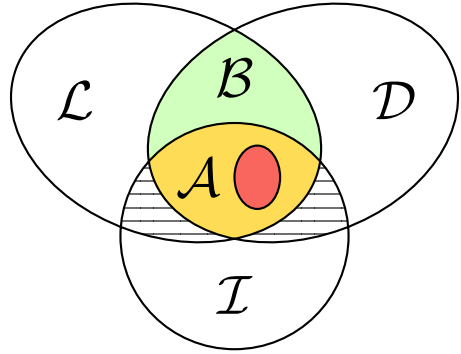
\begin{figure}[!htbp]
\centering
\begin{tikzpicture}[scale=0.6]
\definecolor{biColor}{RGB}{209,255,189}
\definecolor{ambiColor}{RGB}{254,220,86}
\definecolor{nutColor}{RGB}{248,102,94}
\def\laevoNuts{(-0.5,0) ellipse [x radius=3.5,y radius=2.5,rotate=160]}
\def\dextroNuts{(2.5,0) ellipse [x radius=3.5,y radius=2.5,rotate=20]}
\def\interNuts{(1,-2.5) ellipse [x radius=2.5,y radius=2.5,rotate=90]}
\def\nutGraphs{(1.5,-1.2) ellipse [x radius=0.5,y radius=0.7,rotate=0]}
\def\boundingbox{(-4,-5.1) rectangle (6,2.8)}
\begin{scope}
\clip \boundingbox \laevoNuts; \clip \dextroNuts; \fill[pattern={Lines[angle=45,distance=5pt]},pattern color=black] \interNuts;
\end{scope}
\begin{scope}
\clip \boundingbox \dextroNuts; \clip \laevoNuts; \fill[pattern={Lines[angle=-45,distance=5pt]},pattern color=black] \interNuts;
\end{scope}
\begin{scope}
\clip \boundingbox \interNuts; \clip \laevoNuts; \fill[biColor] \dextroNuts;
\end{scope}
\begin{scope}
\clip \laevoNuts; \clip \dextroNuts; \clip \interNuts; \fill[ambiColor] \boundingbox;
\end{scope}
\draw[thick] \laevoNuts \dextroNuts \interNuts;
\draw[thick,fill=nutColor] \nutGraphs;
\node[draw=none,fill=none] at (-2.5,0.5) {\huge$\mathcal{L}$};
\node[draw=none,fill=none] at (4.5,0.5) {\huge$\mathcal{D}$};
\node[draw=none,fill=none] at (1,1) {\huge$\mathcal{B}$};
\node[draw=none,fill=none] at (1,-3.8) {\huge$\mathcal{I}$};
\node[draw=none,fill=none] at (0.2,-1.2) {\huge$\mathcal{A}$};    
\end{tikzpicture}
\caption{Venn diagram of relationships amongst notions of nut digraphs.  Symbols $\mathcal{D}, \mathcal{L}, \mathcal{B}, \mathcal{A}$ and $\mathcal{I}$ denote dextro-,
laevo-, bi-, ambi- and inter-nut digraphs, respectively. Note that the usual nut graphs occupy the small red oval inside the ambi-nut region,
and that the hatched regions are empty.}
\label{fig:nutUniverse}
\end{figure}

\section{Examples and enumeration}

In this section, we give small examples for the five distinct notions of nut digraphs  introduced above.
To begin, we also enumerate nut digraphs of small orders. 

Let $n$ be the number of vertices of the digraphs under consideration.
Then let  $\mathcal{U}_n$ be the class of undirected graphs,
 $\mathcal{O}_n$ the class of oriented digraphs,
and
 $\mathcal{D}_n, \mathcal{B}_n$ and $\mathcal{A}_n$ be
the classes of dextro-nut, bi-nut and ambi-nut digraphs, respectively, all on $n$ vertices. 
 
Throughout this paper, a digraph will be called regular (resp.~$k$-regular) if its underlying graph is regular (resp.~$k$-regular). 
We will use $\mathcal{O}_n^d$ to denote the class of
$d$-regular digraphs of order $n$.
Moreover, let
$\mathcal{O}(\Gamma)$ denote the class of oriented digraphs whose underlying (undirected) graph is $\Gamma$. We use a similar
notation for the classes of nut digraphs, e.g., $\mathcal{B}(\Gamma)$ is the class of bi-nut digraphs for which the underlying graph
is $\Gamma$.

 Table~\ref{tbl:enumGeneral} lists counts of dextro-, bi- and ambi-nut digraphs among \emph{oriented} graphs on up to $8$ vertices. 
 These numbers were obtained naïvely, i.e.\ by generating all oriented graphs on $n$ vertices and filtering out nut digraphs.
The counts of laevo-nut digraphs are of course
equal to those of the dextro-nut digraphs, order by order. Figure~\ref{fig:smallGeneral} shows 
 small examples of nut digraphs from  Table~\ref{tbl:enumGeneral}.

\begin{table}[!htb]
\centering
\begin{tabular}{rrrrrr}
\hline
$n$ & $|\mathcal{U}_n|$ & $|\mathcal{O}_n|$ 
& $|\mathcal{D}_n|$ & $|\mathcal{B}_n|$ & $|\mathcal{A}_n|$ \\
\hline\hline
3 & 2 & 5 
 & 0 & 0 & 0 \\
4 & 6 & 34 
& 1 & 0 & 0 \\
5 & 21 & 535 
& 4 & 0 & 0 \\
6 & 112 & 20848 
& 153 & 2 & 2 \\
7 & 853 & 2120098 
& 17170 & 21 & 1 \\
8 & 11117 & 572849763  
& 5579793 & 9592 & 104 \\
\hline
\end{tabular}
\caption{Enumeration of nut digraphs among oriented graphs on $n \leq 8$ vertices.}
\label{tbl:enumGeneral}
\end{table}

\begin{figure}[!htb]
\centering
\subcaptionbox{\label{fig:exampleDextro}}{
\begin{tikzpicture}[scale=2]
\tikzstyle{vertex}=[draw,circle,font=\scriptsize,minimum size=5pt,inner sep=1pt,fill=magenta!80!white]
\tikzstyle{edge}=[draw,thick,decoration={markings,mark=at position 0.5 with {\arrow{Stealth}}},postaction=decorate]
\tikzstyle{sedge}=[draw,thick,decoration={markings,mark=at position 0.45 with {\arrow{Stealth}}},postaction=decorate]
\node[vertex,label=90:$1$] (v1) at (0, 0) {};
\node[vertex,label=90:$1$] (v2) at (1, 0) {};
\node[vertex,label=-90:$1$] (v3) at (1, -1) {};
\node[vertex,label=-90:$-1$] (v4) at (0, -1) {};
\path[edge] (v1) -- (v2);
\path[edge] (v2) -- (v3);
\path[sedge] (v3) -- (v1);
\path[edge] (v1) -- (v4);
\path[sedge] (v2) -- (v4);
\path[edge] (v3) -- (v4);
\end{tikzpicture}}
\quad
\subcaptionbox{}{
\begin{tikzpicture}[scale=1.0]
\tikzstyle{vertex}=[draw,circle,font=\scriptsize,minimum size=5pt,inner sep=1pt,fill=magenta!80!white]
\tikzstyle{edge}=[draw,thick,decoration={markings,mark=at position 0.5 with {\arrow{Stealth}}},postaction=decorate]
\tikzstyle{sedge}=[draw,thick,decoration={markings,mark=at position 0.45 with {\arrow{Stealth}}},postaction=decorate]
\tikzstyle{ledge}=[draw,thick,decoration={markings,mark=at position 0.65 with {\arrow{Stealth}}},postaction=decorate]
\node[vertex,label=180:$-1$] (v5) at (-1, 0) {};
\node[vertex,label=90:$-1$] (v1) at (1, 0) {};
\node[vertex,label={[yshift=-2pt]90:$1$}] (v6) at (0, -0.6) {};
\node[vertex,label={[yshift=2pt]-90:$1$}] (v3) at (0, -1.5) {};
\node[vertex,label=-90:$1$] (v0) at (-1, -2.1) {};
\node[vertex,label=-90:$-1$] (v4) at (1, -2.1) {};
\node[vertex,label=0:$-1$] (v2) at (-0.3, 0.7) {};
\path[ledge] (v3) -- (v4);
\path[ledge] (v3) -- (v6);
\path[ledge] (v0) -- (v3);
\path[ledge] (v0) -- (v4);
\path[ledge] (v4) -- (v1);
\path[ledge] (v4) -- (v6);
\path[ledge] (v1) -- (v5);
\path[ledge] (v1) -- (v6);
\path[ledge] (v6) -- (v0);
\path[ledge] (v6) -- (v5);
\path[ledge] (v5) -- (v0);
\path[ledge] (v5) -- (v2);
\end{tikzpicture}}
\quad
\subcaptionbox{$M_1(6)$\label{fig:exampleM1}}{
\begin{tikzpicture}[scale=1.2]
\tikzstyle{vertex}=[draw,circle,font=\scriptsize,minimum size=5pt,inner sep=1pt,fill=magenta!80!white]
\tikzstyle{edge}=[draw,thick,decoration={markings,mark=at position 0.5 with {\arrow{Stealth}}},postaction=decorate]
\tikzstyle{sedge}=[draw,thick,decoration={markings,mark=at position 0.45 with {\arrow{Stealth}}},postaction=decorate]
\tikzstyle{ledge}=[draw,thick,decoration={markings,mark=at position 0.65 with {\arrow{Stealth}}},postaction=decorate]
\node[vertex,label=90:$1$] (v0) at (90:1.2) {};
\node[vertex,label=180:$-1$] (v1) at ({90 + 60}:1.2) {};
\node[vertex,label=180:$1$] (v2) at ({90 + 60 * 2}:1.2) {};
\node[vertex,label=-90:$-1$] (v3) at ({90 + 60 * 3}:1.2) {};
\node[vertex,label=0:$1$] (v4) at ({90 + 60 * 4}:1.2) {};
\node[vertex,label=0:$-1$] (v5) at ({90 + 60 * 5}:1.2) {};
\path[edge] (v1) -- (v0);
\path[edge] (v2) -- (v1);
\path[edge] (v3) -- (v2);
\path[edge] (v4) -- (v3);
\path[edge] (v5) -- (v4);
\path[edge] (v0) -- (v5);
\path[edge] (v2) -- (v0);
\path[edge] (v3) -- (v1);
\path[edge] (v4) -- (v2);
\path[edge] (v5) -- (v3);
\path[edge] (v0) -- (v4);
\path[edge] (v1) -- (v5);
\end{tikzpicture}}
\quad
\subcaptionbox{$M_2(6)$\label{fig:exampleM2}}{
\begin{tikzpicture}[scale=1.2]
\tikzstyle{vertex}=[draw,circle,font=\scriptsize,minimum size=5pt,inner sep=1pt,fill=magenta!80!white]
\tikzstyle{edge}=[draw,thick,decoration={markings,mark=at position 0.5 with {\arrow{Stealth}}},postaction=decorate]
\tikzstyle{sedge}=[draw,thick,decoration={markings,mark=at position 0.45 with {\arrow{Stealth}}},postaction=decorate]
\tikzstyle{ledge}=[draw,thick,decoration={markings,mark=at position 0.65 with {\arrow{Stealth}}},postaction=decorate]
\node[vertex,label=90:$1$] (v0) at (90:1.2) {};
\node[vertex,label=180:$-1$] (v1) at ({90 + 60}:1.2) {};
\node[vertex,label=180:$1$] (v2) at ({90 + 60 * 2}:1.2) {};
\node[vertex,label=-90:$-1$] (v3) at ({90 + 60 * 3}:1.2) {};
\node[vertex,label=0:$1$] (v4) at ({90 + 60 * 4}:1.2) {};
\node[vertex,label=0:$-1$] (v5) at ({90 + 60 * 5}:1.2) {};
\path[edge] (v1) -- (v0);
\path[edge] (v2) -- (v1);
\path[edge] (v3) -- (v2);
\path[edge] (v4) -- (v3);
\path[edge] (v5) -- (v4);
\path[edge] (v0) -- (v5);
\path[edge] (v2) -- (v0);
\path[edge] (v1) -- (v3);
\path[edge] (v4) -- (v2);
\path[edge] (v3) -- (v5);
\path[edge] (v0) -- (v4);
\path[edge] (v5) -- (v1);
\end{tikzpicture}}

\subcaptionbox{\label{fig:exampleAmbi7}}{
\begin{tikzpicture}[scale=1.5]
\tikzstyle{vertex}=[draw,circle,font=\scriptsize,minimum size=5pt,inner sep=1pt,fill=magenta!80!white]
\tikzstyle{edge}=[draw,thick,decoration={markings,mark=at position 0.5 with {\arrow{Stealth}}},postaction=decorate]
\tikzstyle{sedge}=[draw,thick,decoration={markings,mark=at position 0.45 with {\arrow{Stealth}}},postaction=decorate]
\tikzstyle{ssedge}=[draw,thick,decoration={markings,mark=at position 0.4 with {\arrow{Stealth}}},postaction=decorate]
\tikzstyle{ledge}=[draw,thick,decoration={markings,mark=at position 0.65 with {\arrow{Stealth}}},postaction=decorate]
\tikzstyle{lledge}=[draw,thick,decoration={markings,mark=at position 0.8 with {\arrow{Stealth}}},postaction=decorate]
\node[vertex,label=180:$1$] (v0) at (180:1) {};
\node[vertex,label=0:$-2$] (v1) at (0:1) {};
\node[vertex,label=90:$-1$] (v5) at (60:1) {};
\node[vertex,label=90:$-1$] (v4) at (120:1) {};
\node[vertex,label=-90:$1$] (v3) at (-60:1) {};
\node[vertex,label=-90:$1$] (v6) at (-120:1) {};
\node[vertex,label=0:$1$] (v2) at (0, 0) {};
\path[edge] (v4) -- (v6);
\path[lledge] (v4) -- (v2);
\path[sedge] (v4) -- (v1);
\path[edge] (v5) -- (v4);
\path[ledge] (v5) -- (v0);
\path[edge] (v0) -- (v4);
\path[sedge] (v0) -- (v3);
\path[ssedge] (v2) -- (v5);
\path[ssedge] (v2) -- (v6);
\path[edge] (v1) -- (v5);
\path[edge] (v1) -- (v3);
\path[edge] (v6) -- (v0);
\path[edge] (v6) -- (v3);
\path[sedge] (v6) -- (v1);
\path[lledge] (v3) -- (v2);
\path[edge] (v3) -- (v5);
\end{tikzpicture}}
\qquad
\subcaptionbox{\label{fig:exampleBi7}}{
\begin{tikzpicture}[scale=1.4]
\tikzstyle{vertex}=[draw,circle,font=\scriptsize,minimum size=5pt,inner sep=1pt,fill=magenta!80!white]
\tikzstyle{edge}=[draw,thick,decoration={markings,mark=at position 0.5 with {\arrow{Stealth}}},postaction=decorate]
\tikzstyle{sedge}=[draw,thick,decoration={markings,mark=at position 0.45 with {\arrow{Stealth}}},postaction=decorate]
\tikzstyle{ledge}=[draw,thick,decoration={markings,mark=at position 0.65 with {\arrow{Stealth}}},postaction=decorate]
\node[vertex,label=90:$1$] (v2) at (90:1.2) {};
\node[vertex,label=180:$1$] (v0) at ({90 + 72}:1.2) {};
\node[vertex,label=180:$-1$] (v3) at ({90 + 72 * 2}:1.2) {};
\node[vertex,label=0:$-1$] (v1) at ({90 + 72 * 3}:1.2) {};
\node[vertex,label=0:$\mp 1$] (v4) at ({90 + 72 * 4}:1.2) {};
\path[edge] (v0) -- (v3);
\path[edge] (v0) -- (v2);
\path[edge] (v2) -- (v4);
\path[edge] (v4) -- (v1);
\path[edge] (v1) -- (v3);
\node[vertex,label={[yshift=-5]180:$1$}] (v5) at (-0.3, 0) {};
\node[vertex,label={[yshift=-5]0:$-1$}] (v6) at (0.3,0) {};
\path[edge] (v3) -- (v5);
\path[edge] (v3) -- (v6);
\path[edge] (v6) -- (v1);
\path[edge] (v1) -- (v5);
\path[edge] (v5) -- (v0);
\path[edge] (v6) -- (v4);
\path[edge] (v2) -- (v5);
\path[edge] (v6) -- (v2);
\path[ledge] (v5) -- (v6);
\path[ledge] (v6) to [bend right=30] (v0);
\path[sedge] (v4) to [bend right=30] (v5);
\end{tikzpicture}}
\caption{Examples of nut digraphs among \emph{oriented} graphs: (a) the smallest dextro-nut digraph, (b) a dextro-nut digraph with a leaf, (c)~\&~(d) the two smallest ambi-nut digraphs (labeled $M_1(6)$ and $M_2(6)$), (e) the unique ambi-nut digraph on
7 vertices, (f) one of the 20 bi-nut digraphs on 7 vertices that are not ambi-nuts. In all panels, vertex labels show the kernel vector. In case (f), the vectors from the kernel
and co-kernel differ in just one entry (labeled $\mp 1$, with $-1$ corresponding to the kernel vector).}
\label{fig:smallGeneral}
\end{figure}
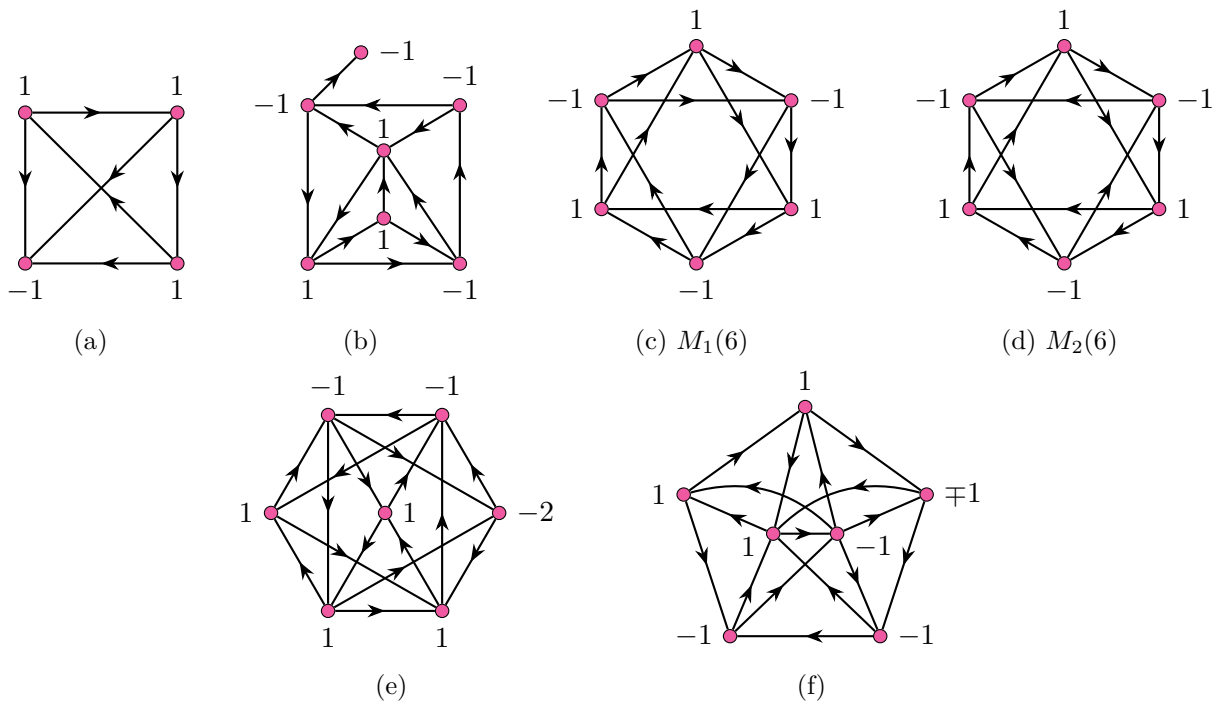

Table~\ref{tbl:enumQuartic} lists counts of dextro-, bi- and ambi-nut digraphs among $4$-regular oriented graphs on up to $11$ vertices.
Some of these digraphs are shown in Figure~\ref{fig:smallQuartic}. Note that the graphs (e) to (g) in Figure~\ref{fig:smallQuartic} all have the circulant $\Circ(8, \{1, 2\})$
as their underlying graph. (For definition, notation and properties of circulants see, e.g.~\cite{DamnjanStevan2022}.)
Circulant underlying graphs are also encountered in Figure~\ref{fig:smallGeneral}. 
This suggests definition of the following families.

\begin{definition}
For $k \in \{1, 2, 3\}$ and $n$ even, let $M_k(n)$ be a directed graph with the vertex set $\{0, 1, \ldots, n - 1\}$ and the arc set 
\begin{align*}
E(M_1(n)) & = \{ (i, i + 1) \mid 0 \leq i < n \} \cup \{ (i, i + 2) \mid 0 \leq i < n \}, \\
E(M_2(n)) & = \{ (i, i + 1) \mid 0 \leq i < n \} \cup \{ (i, i + 2) \colon 0 \leq i < n,\ i \text{ even} \} \cup \{ (i + 2, i) \mid 0 < i < n,\ i \text{ odd} \}, \\
E(M_3(n)) & = \{ (i, i + 1) \mid 0 \leq i < n \} \cup \{ (i + 2, i) \mid 0 \leq i < n \},
\end{align*}
where addition is done modulo $n$. (See Figures~\ref{fig:smallGeneral} and~\ref{fig:smallQuartic} for examples.)
\end{definition}

\noindent Note that all three digraphs $M_1(n), M_2(n)$ and $M_3(n)$ have the circulant $\Circ(n, \{1, 2\})$, the skeleton of the $\frac{n}{2}$-antiprism, as their underlying graph.

\begin{proposition}
\label{prof:maxineFamily}
The digraphs $M_1(n)$ and $M_2(n)$ are ambi-nut digraphs for every even $n \geq 6$. The graph $M_3(n)$ is an ambi-nut digraph for every even $n \geq 6$ that
satisfies $n \not\equiv 0 \pmod{6}$.
\end{proposition}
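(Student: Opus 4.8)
The plan is to work throughout with the circulant (for $M_1$ and $M_3$) or near-circulant (for $M_2$) adjacency matrices of the $M_k(n)$, so that the condition \eqref{eq:zerosum} defining membership in the kernel becomes a linear recurrence over $\ZZ_n$. Write $\bs\in\RR^{V}$ for the alternating vector, $\bs_j=(-1)^j$ ($j\in\ZZ_n$), which is full and (since $n$ is even) well defined. Before treating the three families I would set up two reductions. First, $\dim\CoKer G=\eta(G)=\dim\Ker G$ always, so once $\Ker M_k(n)$ is known to be one-dimensional, the co-kernel is automatically one-dimensional and it remains only to pin down a full vector lying in both. Second, I would check that each $M_k(n)$ is isomorphic to its reverse $M_k(n)^R$ via the vertex permutation $i\mapsto -i$ of $\ZZ_n$: one verifies arc-type by arc-type that this map sends every arc of $M_k(n)$ to an arc of $M_k(n)^R$ (for $M_2$ using that $-i$ and $i$ have the same parity). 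Consequently $\CoKer M_k(n)$ is the image of $\Ker M_k(n)$ under the coordinate permutation $j\mapsto -j$, and this permutation fixes $\bs$. Hence the whole proposition reduces to showing, for each $k$ and each admissible even $n\ge 6$, that $\Ker M_k(n)$ is one-dimensional and spanned by $\bs$; ambi-nuttiness then follows. (For $M_1$ and $M_3$ one can instead observe directly that the kernel and co-kernel recurrences coincide, and for $M_2$ verify by hand that $\bs$ also solves the co-kernel equations.)

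For $M_1(n)$ the plan is immediate: \eqref{eq:zerosum} at vertex $i$ is $\x_{i+1}+\x_{i+2}=0$, i.e.\ $\x_{j+1}=-\x_j$ for all $j\in\ZZ_n$; iterating around the single $n$-cycle of the shift-by-one map gives $\x_j=(-1)^j\x_0$, which closes up consistently exactly because $n$ is even, so $\Ker M_1(n)=\langle\bs\rangle$. For $M_2(n)$ I would first read off from the definition that $G^+(i)=\{i+1,i+2\}$ for $i$ even and $G^+(i)=\{i+1,i-2\}$ for $i$ odd; then the even-vertex equations give $\x_e=-\x_{e-1}$ for every even $e$, the odd-vertex equations give $\x_e=-\x_{e-3}$ for every even $e$, and combining the two forces all odd-indexed entries equal, say to $b$, whence every even-indexed entry equals $-b$. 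So $\Ker M_2(n)=\langle\bs\rangle$ here too. In both cases $\bs$ is full, so $M_1(n)$ and $M_2(n)$ are ambi-nut digraphs for every even $n\ge 6$.

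For $M_3(n)$ we have $G^+(i)=\{i+1,i-2\}$, so \eqref{eq:zerosum} reads $\x_{i+1}+\x_{i-2}=0$, equivalently $\x_{j+3}=-\x_j$ for all $j\in\ZZ_n$. The approach is to use the general fact that a sign-twisted recurrence $\x_{\sigma(j)}=-\x_j$, for a permutation $\sigma$ of $\ZZ_n$, has solution space of dimension equal to the number of cycles of $\sigma$ of even length (on an odd-length cycle the alternating pattern must close up to $0$). Here $\sigma$ is the shift by $3$, every cycle of which has length $n/\gcd(3,n)$. If $3\nmid n$ — equivalently, since $n$ is even, if $n\not\equiv 0\pmod 6$ — then $\sigma$ is a single $n$-cycle, of even length, so the solution space is one-dimensional; tracing the recurrence around that cycle shows every entry is $\pm\x_0$, so the spanning vector is full and is in fact a scalar multiple of $\bs$. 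Thus $\Ker M_3(n)=\langle\bs\rangle$ and $M_3(n)$ is an ambi-nut digraph for every such $n$.

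The step I expect to be the main obstacle is the $M_3(n)$ analysis, and in particular getting the cycle structure of the shift by $3$ exactly right, since the whole dichotomy hinges on the arithmetic of $\gcd(3,n)$. To confirm the hypothesis $n\not\equiv 0\pmod 6$ is sharp I would also check the excluded case: when $6\mid n$ the shift by $3$ has three cycles, each of even length $n/3$, so $\dim\Ker M_3(n)=3$ and $M_3(n)$ is then not even a dextro-nut. The remaining ingredients — fullness of $\bs$, the consistency condition $(-1)^n=1$ on each cycle, and the verification that $i\mapsto -i$ carries the arcs of each $M_k(n)$ onto those of its reverse — are short and mechanical.
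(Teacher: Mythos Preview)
Your proof is correct and follows essentially the same approach as the paper: exhibit the alternating vector $\bs_j=(-1)^j$, verify via the local conditions that it lies in both kernel and co-kernel, and compute $\dim\Ker M_k(n)$ by reading the kernel equations as a linear recurrence on $\ZZ_n$. The paper's own proof is a three-line sketch that simply asserts these facts; you have actually carried out the verifications, and your added observation that $i\mapsto -i$ gives an isomorphism $M_k(n)\cong M_k(n)^R$ fixing $\bs$ is a clean way to pass from kernel to co-kernel that the paper leaves implicit.
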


\begin{proof}
This is a straightforward consequence of local conditions~\eqref{eq:zerosum} and~\eqref{eq:zerosumn}. A vector that spans the (co-)kernel in all three cases is $\x(i) = (-1)^i$.
It is easy to verify that $\dim \Ker M_3(n) = 3$ if $n \equiv 0 \pmod{6}$ and $\dim \Ker M_3(n) = 1$ otherwise, whereas $\dim \Ker M_1(n) = \dim \Ker M_2(n) = 1$ for all even~$n$.
\end{proof}

\begin{table}[!htbp]
\centering
\begin{tabular}{rrrrrr}
\hline
$n$ & $|\mathcal{U}_n^4|$ & $|\mathcal{O}_n^4|$ 
& $|\mathcal{D}_n^4|$ & $|\mathcal{B}_n^4|$ & $|\mathcal{A}_n^4|$ \\
\hline\hline
5 & 1 & 12 
& 0 & 0 & 0 \\
6 & 1 & 112 
& 4 & 2 & 2 \\
7 & 2 & 1602 
& 9 & 0 & 0 \\
8 & 6 & 32263 
& 202 & 27 & 5 \\
9 & 16 & 748576 
& 2255 & 0 & 0 \\
10 & 59 & 19349594 
& 33034 & 2072 & 32 \\
11 & 265 & 548123668 
& 436947 & 0 & 0 \\
\hline
\end{tabular}
\caption{Enumeration of nut digraphs among $4$-regular oriented graphs on $n \leq 11$ vertices.}
\label{tbl:enumQuartic}
\end{table}

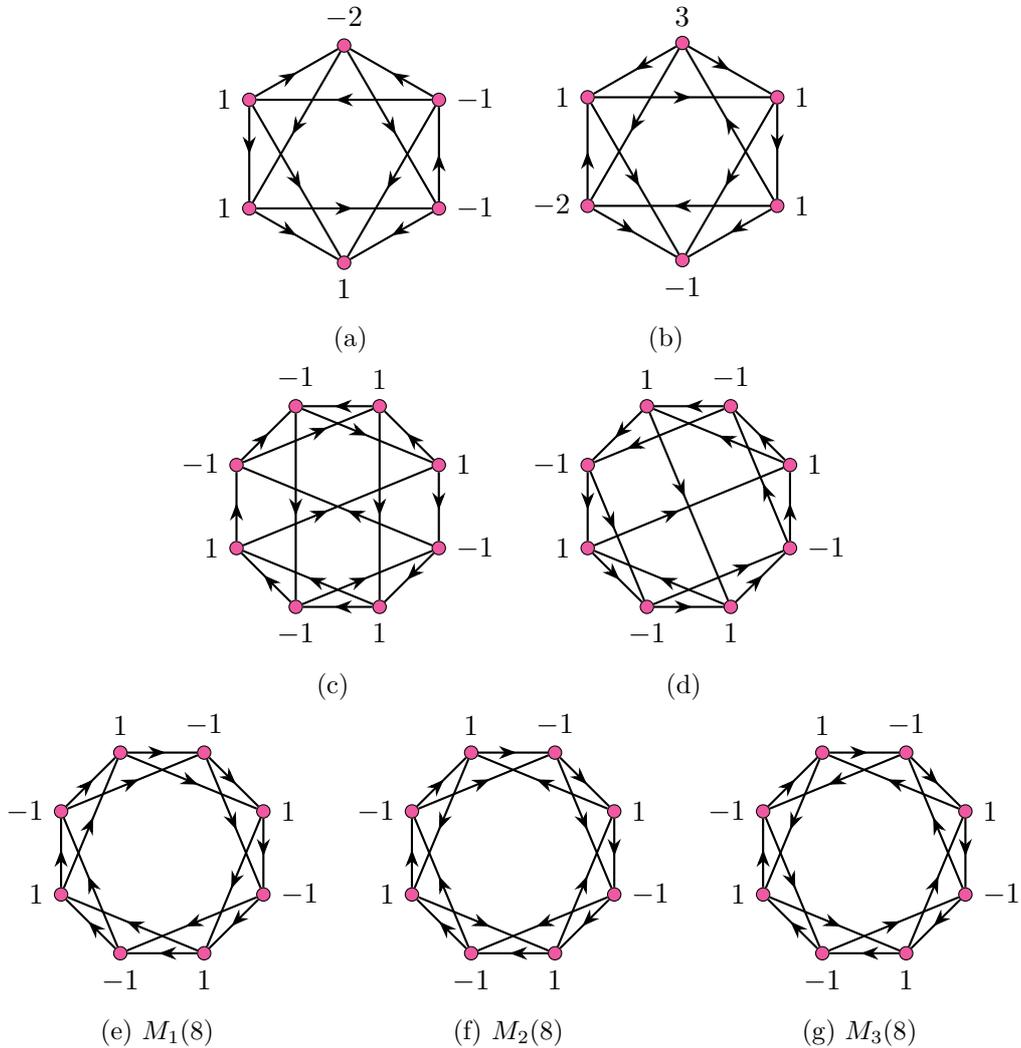
\begin{figure}[!htbp]
\centering
\subcaptionbox{\label{fig:exampleQ1}}{
\begin{tikzpicture}[scale=1.2]
\tikzstyle{vertex}=[draw,circle,font=\scriptsize,minimum size=5pt,inner sep=1pt,fill=magenta!80!white]
\tikzstyle{edge}=[draw,thick,decoration={markings,mark=at position 0.5 with {\arrow{Stealth}}},postaction=decorate]
\tikzstyle{sedge}=[draw,thick,decoration={markings,mark=at position 0.45 with {\arrow{Stealth}}},postaction=decorate]
\tikzstyle{ledge}=[draw,thick,decoration={markings,mark=at position 0.55 with {\arrow{Stealth}}},postaction=decorate]
\node[vertex,label=90:$-2$] (v0) at (90:1.2) {};
\node[vertex,label=180:$1$] (v1) at ({90 + 60}:1.2) {};
\node[vertex,label=180:$1$] (v2) at ({90 + 60 * 2}:1.2) {};
\node[vertex,label=-90:$1$] (v3) at ({90 + 60 * 3}:1.2) {};
\node[vertex,label=0:$-1$] (v4) at ({90 + 60 * 4}:1.2) {};
\node[vertex,label=0:$-1$] (v5) at ({90 + 60 * 5}:1.2) {};
\path[edge] (v1) -- (v0);
\path[edge] (v5) -- (v0);
\path[edge] (v1) -- (v2);
\path[edge] (v2) -- (v3);
\path[edge] (v4) -- (v3);
\path[edge] (v4) -- (v5);
\path[ledge] (v0) -- (v2);
\path[ledge] (v0) -- (v4);
\path[ledge] (v2) -- (v4);
\path[ledge] (v5) -- (v1);
\path[ledge] (v5) -- (v3);
\path[ledge] (v1) -- (v3);
\end{tikzpicture}}
\subcaptionbox{\label{fig:exampleQ2}}{
\begin{tikzpicture}[scale=1.2]
\tikzstyle{vertex}=[draw,circle,font=\scriptsize,minimum size=5pt,inner sep=1pt,fill=magenta!80!white]
\tikzstyle{edge}=[draw,thick,decoration={markings,mark=at position 0.5 with {\arrow{Stealth}}},postaction=decorate]
\tikzstyle{sedge}=[draw,thick,decoration={markings,mark=at position 0.45 with {\arrow{Stealth}}},postaction=decorate]
\tikzstyle{ledge}=[draw,thick,decoration={markings,mark=at position 0.55 with {\arrow{Stealth}}},postaction=decorate]
\node[vertex,label=90:$3$] (v0) at (90:1.2) {};
\node[vertex,label=180:$1$] (v1) at ({90 + 60}:1.2) {};
\node[vertex,label=180:$-2$] (v2) at ({90 + 60 * 2}:1.2) {};
\node[vertex,label=-90:$-1$] (v3) at ({90 + 60 * 3}:1.2) {};
\node[vertex,label=0:$1$] (v4) at ({90 + 60 * 4}:1.2) {};
\node[vertex,label=0:$1$] (v5) at ({90 + 60 * 5}:1.2) {};
\path[edge] (v0) -- (v1);
\path[edge] (v0) -- (v5);
\path[edge] (v2) -- (v1);
\path[edge] (v2) -- (v3);
\path[edge] (v4) -- (v3);
\path[edge] (v5) -- (v4);
\path[ledge] (v0) -- (v2);
\path[ledge] (v4) -- (v0);
\path[ledge] (v4) -- (v2);
\path[ledge] (v1) -- (v5);
\path[ledge] (v5) -- (v3);
\path[ledge] (v1) -- (v3);
\end{tikzpicture}}

\subcaptionbox{\label{fig:exampleQ3}}{
\begin{tikzpicture}[scale=1.2]
\tikzstyle{vertex}=[draw,circle,font=\scriptsize,minimum size=5pt,inner sep=1pt,fill=magenta!80!white]
\tikzstyle{edge}=[draw,thick,decoration={markings,mark=at position 0.5 with {\arrow{Stealth}}},postaction=decorate]
\tikzstyle{sedge}=[draw,thick,decoration={markings,mark=at position 0.45 with {\arrow{Stealth}}},postaction=decorate]
\tikzstyle{ledge}=[draw,thick,decoration={markings,mark=at position 0.55 with {\arrow{Stealth}}},postaction=decorate]
\tikzstyle{lledge}=[draw,thick,decoration={markings,mark=at position 0.65 with {\arrow{Stealth}}},postaction=decorate]
\node[vertex,label=0:$1$] (v6) at (22.5:1.2) {};
\node[vertex,label=90:$1$] (v0) at ({22.5 + 45}:1.2) {};
\node[vertex,label=90:$-1$] (v3) at ({22.5 + 45 * 2}:1.2) {};
\node[vertex,label=180:$-1$] (v5) at ({22.5 + 45 * 3}:1.2) {};
\node[vertex,label=180:$1$] (v2) at ({22.5 + 45 * 4}:1.2) {};
\node[vertex,label=-90:$-1$] (v7) at ({22.5 + 45 * 5}:1.2) {};
\node[vertex,label=-90:$1$] (v4) at ({22.5 + 45 * 6}:1.2) {};
\node[vertex,label=0:$-1$] (v1) at ({22.5 + 45 * 7}:1.2) {};
\path[ledge] (v0) -- (v3);
\path[ledge] (v0) -- (v4);
\path[ledge] (v1) -- (v4);
\path[sedge] (v1) -- (v5);
\path[ledge] (v2) -- (v5);
\path[sedge] (v2) -- (v6);
\path[ledge] (v3) -- (v7);
\path[edge] (v3) -- (v6);
\path[ledge] (v4) -- (v7);
\path[edge] (v4) -- (v2);
\path[lledge] (v5) -- (v0);
\path[ledge] (v5) -- (v3);
\path[ledge] (v6) -- (v0);
\path[ledge] (v6) -- (v1);
\path[edge] (v7) -- (v1);
\path[ledge] (v7) -- (v2);
\end{tikzpicture}}
\subcaptionbox{\label{fig:exampleQ4}}{
\begin{tikzpicture}[scale=1.2]
\tikzstyle{vertex}=[draw,circle,font=\scriptsize,minimum size=5pt,inner sep=1pt,fill=magenta!80!white]
\tikzstyle{edge}=[draw,thick,decoration={markings,mark=at position 0.5 with {\arrow{Stealth}}},postaction=decorate]
\tikzstyle{sedge}=[draw,thick,decoration={markings,mark=at position 0.45 with {\arrow{Stealth}}},postaction=decorate]
\tikzstyle{ledge}=[draw,thick,decoration={markings,mark=at position 0.55 with {\arrow{Stealth}}},postaction=decorate]
\tikzstyle{lledge}=[draw,thick,decoration={markings,mark=at position 0.75 with {\arrow{Stealth}}},postaction=decorate]
\node[vertex,label=0:$1$] (v5) at (22.5:1.2) {};
\node[vertex,label=90:$-1$] (v7) at ({22.5 + 45}:1.2) {};
\node[vertex,label=90:$1$] (v2) at ({22.5 + 45 * 2}:1.2) {};
\node[vertex,label=180:$-1$] (v4) at ({22.5 + 45 * 3}:1.2) {};
\node[vertex,label=180:$1$] (v0) at ({22.5 + 45 * 4}:1.2) {};
\node[vertex,label=-90:$-1$] (v3) at ({22.5 + 45 * 5}:1.2) {};
\node[vertex,label=-90:$1$] (v6) at ({22.5 + 45 * 6}:1.2) {};
\node[vertex,label=0:$-1$] (v1) at ({22.5 + 45 * 7}:1.2) {};
\path[ledge] (v0) -- (v3);
\path[sedge] (v0) -- (v5);
\path[ledge] (v1) -- (v5);
\path[sedge] (v1) -- (v7);
\path[ledge] (v2) -- (v4);
\path[sedge] (v2) -- (v6);
\path[lledge] (v3) -- (v1);
\path[ledge] (v3) -- (v6);
\path[ledge] (v4) -- (v0);
\path[sedge] (v4) -- (v3);
\path[edge] (v5) -- (v2);
\path[ledge] (v5) -- (v7);
\path[edge] (v6) -- (v0);
\path[ledge] (v6) -- (v1);
\path[ledge] (v7) -- (v2);
\path[lledge] (v7) -- (v4);
\end{tikzpicture}}

\subcaptionbox{$M_1(8)$\label{fig:exampleQ5}}{
\begin{tikzpicture}[scale=1.2]
\tikzstyle{vertex}=[draw,circle,font=\scriptsize,minimum size=5pt,inner sep=1pt,fill=magenta!80!white]
\tikzstyle{edge}=[draw,thick,decoration={markings,mark=at position 0.5 with {\arrow{Stealth}}},postaction=decorate]
\tikzstyle{sedge}=[draw,thick,decoration={markings,mark=at position 0.45 with {\arrow{Stealth}}},postaction=decorate]
\tikzstyle{ledge}=[draw,thick,decoration={markings,mark=at position 0.55 with {\arrow{Stealth}}},postaction=decorate]
\tikzstyle{lledge}=[draw,thick,decoration={markings,mark=at position 0.75 with {\arrow{Stealth}}},postaction=decorate]
\node[vertex,label=0:$1$] (v0) at (22.5:1.2) {};
\node[vertex,label=90:$-1$] (v4) at ({22.5 + 45}:1.2) {};
\node[vertex,label=90:$1$] (v6) at ({22.5 + 45 * 2}:1.2) {};
\node[vertex,label=180:$-1$] (v1) at ({22.5 + 45 * 3}:1.2) {};
\node[vertex,label=180:$1$] (v3) at ({22.5 + 45 * 4}:1.2) {};
\node[vertex,label=-90:$-1$] (v7) at ({22.5 + 45 * 5}:1.2) {};
\node[vertex,label=-90:$1$] (v5) at ({22.5 + 45 * 6}:1.2) {};
\node[vertex,label=0:$-1$] (v2) at ({22.5 + 45 * 7}:1.2) {};
\path[ledge] (v0) -- (v2);
\path[ledge] (v0) -- (v5);
\path[ledge] (v1) -- (v4);
\path[ledge] (v1) -- (v6);
\path[ledge] (v2) -- (v5);
\path[ledge] (v2) -- (v7);
\path[ledge] (v3) -- (v1);
\path[ledge] (v3) -- (v6);
\path[ledge] (v4) -- (v0);
\path[ledge] (v4) -- (v2);
\path[ledge] (v5) -- (v3);
\path[ledge] (v5) -- (v7);
\path[ledge] (v6) -- (v4);
\path[ledge] (v6) -- (v0);
\path[ledge] (v7) -- (v1);
\path[ledge] (v7) -- (v3);
\end{tikzpicture}}
\subcaptionbox{$M_2(8)$\label{fig:exampleQ6}}{
\begin{tikzpicture}[scale=1.2]
\tikzstyle{vertex}=[draw,circle,font=\scriptsize,minimum size=5pt,inner sep=1pt,fill=magenta!80!white]
\tikzstyle{edge}=[draw,thick,decoration={markings,mark=at position 0.5 with {\arrow{Stealth}}},postaction=decorate]
\tikzstyle{sedge}=[draw,thick,decoration={markings,mark=at position 0.45 with {\arrow{Stealth}}},postaction=decorate]
\tikzstyle{ledge}=[draw,thick,decoration={markings,mark=at position 0.55 with {\arrow{Stealth}}},postaction=decorate]
\tikzstyle{lledge}=[draw,thick,decoration={markings,mark=at position 0.75 with {\arrow{Stealth}}},postaction=decorate]
\node[vertex,label=0:$1$] (v0) at (22.5:1.2) {};
\node[vertex,label=90:$-1$] (v4) at ({22.5 + 45}:1.2) {};
\node[vertex,label=90:$1$] (v6) at ({22.5 + 45 * 2}:1.2) {};
\node[vertex,label=180:$-1$] (v1) at ({22.5 + 45 * 3}:1.2) {};
\node[vertex,label=180:$1$] (v3) at ({22.5 + 45 * 4}:1.2) {};
\node[vertex,label=-90:$-1$] (v7) at ({22.5 + 45 * 5}:1.2) {};
\node[vertex,label=-90:$1$] (v5) at ({22.5 + 45 * 6}:1.2) {};
\node[vertex,label=0:$-1$] (v2) at ({22.5 + 45 * 7}:1.2) {};
\path[ledge] (v0) -- (v2);
\path[ledge] (v1) -- (v4);
\path[ledge] (v1) -- (v6);
\path[ledge] (v2) -- (v5);
\path[ledge] (v2) -- (v7);
\path[ledge] (v3) -- (v1);
\path[ledge] (v4) -- (v0);
\path[ledge] (v4) -- (v2);
\path[ledge] (v5) -- (v7);
\path[ledge] (v6) -- (v4);
\path[ledge] (v7) -- (v1);
\path[ledge] (v7) -- (v3);
\path[ledge] (v0) -- (v6);
\path[ledge] (v6) -- (v3);
\path[ledge] (v3) -- (v5);
\path[ledge] (v5) -- (v0);
\end{tikzpicture}}
\subcaptionbox{$M_3(8)$\label{fig:exampleQ7}}{
\begin{tikzpicture}[scale=1.2]
\tikzstyle{vertex}=[draw,circle,font=\scriptsize,minimum size=5pt,inner sep=1pt,fill=magenta!80!white]
\tikzstyle{edge}=[draw,thick,decoration={markings,mark=at position 0.5 with {\arrow{Stealth}}},postaction=decorate]
\tikzstyle{sedge}=[draw,thick,decoration={markings,mark=at position 0.45 with {\arrow{Stealth}}},postaction=decorate]
\tikzstyle{ledge}=[draw,thick,decoration={markings,mark=at position 0.55 with {\arrow{Stealth}}},postaction=decorate]
\tikzstyle{lledge}=[draw,thick,decoration={markings,mark=at position 0.75 with {\arrow{Stealth}}},postaction=decorate]
\node[vertex,label=0:$1$] (v0) at (22.5:1.2) {};
\node[vertex,label=90:$-1$] (v4) at ({22.5 + 45}:1.2) {};
\node[vertex,label=90:$1$] (v6) at ({22.5 + 45 * 2}:1.2) {};
\node[vertex,label=180:$-1$] (v1) at ({22.5 + 45 * 3}:1.2) {};
\node[vertex,label=180:$1$] (v3) at ({22.5 + 45 * 4}:1.2) {};
\node[vertex,label=-90:$-1$] (v7) at ({22.5 + 45 * 5}:1.2) {};
\node[vertex,label=-90:$1$] (v5) at ({22.5 + 45 * 6}:1.2) {};
\node[vertex,label=0:$-1$] (v2) at ({22.5 + 45 * 7}:1.2) {};
\path[ledge] (v0) -- (v2);
\path[ledge] (v1) -- (v6);
\path[ledge] (v2) -- (v5);
\path[ledge] (v3) -- (v1);
\path[ledge] (v4) -- (v0);
\path[ledge] (v5) -- (v7);
\path[ledge] (v6) -- (v4);
\path[ledge] (v7) -- (v3);
\path[ledge] (v0) -- (v6);
\path[ledge] (v6) -- (v3);
\path[ledge] (v3) -- (v5);
\path[ledge] (v5) -- (v0);
\path[ledge] (v4) -- (v1);
\path[ledge] (v1) -- (v7);
\path[ledge] (v7) -- (v2);
\path[ledge] (v2) -- (v4);
\end{tikzpicture}}
\caption{Small $4$-regular nut digraphs. Panels (a) \& (b) show the two $4$-regular dextro-nut digraphs on $6$ vertices that are not bi-nut digraphs.
Both have the same underlying graph as $M_1(6)$, and since each of them
contains a sink, they are not isomorphic to $M_1(6)$ or $M_2(6)$. Panels (c) to (g) show 
the full set of  five $4$-regular ambi-nut digraphs on $8$ vertices.}
\label{fig:smallQuartic}
\end{figure}

Note that $\Circ(n, \{1, 2\})$ is itself a nut graph for every even $n \geq 6$ that satisfies $n \not\equiv 0 \pmod{6}$.
The graph $M_1(8)$ is an ambi-nut digraph and in fact its underlying graph is a nut graph. 

The following definition introduces three families that can be considered as directed analogues of Rose Window graphs~\cite{Stitch2008}.

\begin{definition}
For $k \in \{1, 2, 3\}$ and $n \geq 5$, let $D_k(n)$ be a directed graph with the vertex set $\{v_0, v_1, \ldots, v_{n - 1}\} \cup \{u_0, u_1, \ldots, u_{n - 1}\}$ and the arc set 
\begin{align*}
E(D_1(n)) & = \{ (v_i, v_{i + 1}), (u_i, u_{i + 2}), (v_i, u_i), (u_i, v_{i + 1}) \mid 0 \leq i < n \}, \\
E(D_2(n)) & = \{ (v_i, v_{i + 1}), (u_i, u_{i + 2}), (u_i, v_i), (v_{i + 1}, u_i) \mid 0 \leq i < n \},\\
E(D_3(n)) & = \{ (v_i, v_{i + 1}), (u_{i + 2}, u_i), (v_i, u_i), (u_i, v_{i + 1}) \mid 0 \leq i < n \}.
\end{align*}
where addition is done modulo $n$. See Figure~\ref{fig:daemons} for examples.
\end{definition}

\begin{figure}[!htbp]
\centering
\subcaptionbox{$D_1(5)$\label{fig:exampleSSS}}{
\begin{tikzpicture}[scale=1.3]
\tikzstyle{vertex}=[draw,circle,font=\scriptsize,minimum size=5pt,inner sep=1pt,fill=magenta!80!white]
\tikzstyle{edge}=[draw,thick,decoration={markings,mark=at position 0.5 with {\arrow{Stealth}}},postaction=decorate]
\tikzstyle{sedge}=[draw,thick,decoration={markings,mark=at position 0.45 with {\arrow{Stealth}}},postaction=decorate]
\tikzstyle{ledge}=[draw,thick,decoration={markings,mark=at position 0.90 with {\arrow{Stealth}}},postaction=decorate]
\node[vertex,label={[xshift=4pt]90:$1$}] (v0) at ({90 - 36}:0.7) {};
\node[vertex,label={[xshift=-4pt]90:$1$}] (v1) at ({90 + 72 - 36}:0.7) {};
\node[vertex,label=90:$1$] (v2) at ({90 + 2 * 72 - 36}:0.7) {};
\node[vertex,label=-90:$1$] (v3) at ({90 + 3 * 72 - 36}:0.7) {};
\node[vertex,label=90:$1$] (v4) at ({90 + 4 * 72 - 36}:0.7) {};
\node[vertex,label=90:$-1$] (u0) at (90:1.7) {};
\node[vertex,label={[xshift=-2pt]90:$-1$}] (u1) at ({90 + 72}:1.7) {};
\node[vertex,label=180:$-1$] (u2) at ({90 + 2 * 72}:1.7) {};
\node[vertex,label=0:$-1$] (u3) at ({90 + 3 * 72}:1.7) {};
\node[vertex,label=90:$-1$] (u4) at ({90 + 4 * 72}:1.7) {};
\path[edge] (u1) -- (u2);
\path[edge] (u2) -- (u3);
\path[edge] (u3) -- (u4);
\path[edge] (u4) -- (u0);
\path[edge] (u0) -- (u1);
\path[edge] (v0) -- (u0);
\path[edge] (u4) -- (v0) ;
\path[edge] (v4) -- (u4) ;
\path[edge] (u3) -- (v4);
\path[edge] (v3) -- (u3);
\path[edge] (u2) -- (v3) ;
\path[edge] (v2) -- (u2);
\path[edge] (u1) -- (v2) ;
\path[edge] (v1) -- (u1) ;
\path[edge] (u0) -- (v1);
\path[ledge] (v1) -- (v3);
\path[ledge] (v2) -- (v4);
\path[ledge] (v3) -- (v0);
\path[ledge] (v4) -- (v1);
\path[ledge] (v0) -- (v2);
\end{tikzpicture}
}
\subcaptionbox{$D_2(5)$\label{fig:exampleSDS}}{
\begin{tikzpicture}[scale=1.3]
\tikzstyle{vertex}=[draw,circle,font=\scriptsize,minimum size=5pt,inner sep=1pt,fill=magenta!80!white]
\tikzstyle{edge}=[draw,thick,decoration={markings,mark=at position 0.5 with {\arrow{Stealth}}},postaction=decorate]
\tikzstyle{sedge}=[draw,thick,decoration={markings,mark=at position 0.45 with {\arrow{Stealth}}},postaction=decorate]
\tikzstyle{ledge}=[draw,thick,decoration={markings,mark=at position 0.90 with {\arrow{Stealth}}},postaction=decorate]
\node[vertex,label={[xshift=4pt]90:$1$}] (v0) at ({90 - 36}:0.7) {};
\node[vertex,label={[xshift=-4pt]90:$1$}] (v1) at ({90 + 72 - 36}:0.7) {};
\node[vertex,label=90:$1$] (v2) at ({90 + 2 * 72 - 36}:0.7) {};
\node[vertex,label=-90:$1$] (v3) at ({90 + 3 * 72 - 36}:0.7) {};
\node[vertex,label=90:$1$] (v4) at ({90 + 4 * 72 - 36}:0.7) {};
\node[vertex,label=90:$-1$] (u0) at (90:1.7) {};
\node[vertex,label={[xshift=-2pt]90:$-1$}] (u1) at ({90 + 72}:1.7) {};
\node[vertex,label=180:$-1$] (u2) at ({90 + 2 * 72}:1.7) {};
\node[vertex,label=0:$-1$] (u3) at ({90 + 3 * 72}:1.7) {};
\node[vertex,label=90:$-1$] (u4) at ({90 + 4 * 72}:1.7) {};
\path[edge] (u1) -- (u2);
\path[edge] (u2) -- (u3);
\path[edge] (u3) -- (u4);
\path[edge] (u4) -- (u0);
\path[edge] (u0) -- (u1);
\path[edge] (u0) -- (v0);
\path[edge] (v0) -- (u4);
\path[edge] (u4) -- (v4);
\path[edge] (v4) -- (u3);
\path[edge] (u3) -- (v3);
\path[edge] (v3) -- (u2);
\path[edge] (u2) -- (v2);
\path[edge] (v2) -- (u1);
\path[edge] (u1) -- (v1);
\path[edge] (v1) -- (u0);
\path[ledge] (v1) -- (v3);
\path[ledge] (v2) -- (v4);
\path[ledge] (v3) -- (v0);
\path[ledge] (v4) -- (v1);
\path[ledge] (v0) -- (v2);
\end{tikzpicture}
}
\subcaptionbox{$D_3(5)$\label{fig:exampleSSD}}{
\begin{tikzpicture}[scale=1.3]
\tikzstyle{vertex}=[draw,circle,font=\scriptsize,minimum size=5pt,inner sep=1pt,fill=magenta!80!white]
\tikzstyle{edge}=[draw,thick,decoration={markings,mark=at position 0.5 with {\arrow{Stealth}}},postaction=decorate]
\tikzstyle{sedge}=[draw,thick,decoration={markings,mark=at position 0.45 with {\arrow{Stealth}}},postaction=decorate]
\tikzstyle{ledge}=[draw,thick,decoration={markings,mark=at position 0.90 with {\arrow{Stealth}}},postaction=decorate]
\node[vertex,label={[xshift=4pt]90:$1$}] (v0) at ({90 - 36}:0.7) {};
\node[vertex,label={[xshift=-4pt]90:$1$}] (v1) at ({90 + 72 - 36}:0.7) {};
\node[vertex,label=90:$1$] (v2) at ({90 + 2 * 72 - 36}:0.7) {};
\node[vertex,label=-90:$1$] (v3) at ({90 + 3 * 72 - 36}:0.7) {};
\node[vertex,label=90:$1$] (v4) at ({90 + 4 * 72 - 36}:0.7) {};
\node[vertex,label=90:$-1$] (u0) at (90:1.7) {};
\node[vertex,label={[xshift=-2pt]90:$-1$}] (u1) at ({90 + 72}:1.7) {};
\node[vertex,label=180:$-1$] (u2) at ({90 + 2 * 72}:1.7) {};
\node[vertex,label=0:$-1$] (u3) at ({90 + 3 * 72}:1.7) {};
\node[vertex,label=90:$-1$] (u4) at ({90 + 4 * 72}:1.7) {};
\path[edge] (u1) -- (u2);
\path[edge] (u2) -- (u3);
\path[edge] (u3) -- (u4);
\path[edge] (u4) -- (u0);
\path[edge] (u0) -- (u1);
\path[edge] (v0) -- (u0);
\path[edge] (u4) -- (v0) ;
\path[edge] (v4) -- (u4) ;
\path[edge] (u3) -- (v4);
\path[edge] (v3) -- (u3);
\path[edge] (u2) -- (v3) ;
\path[edge] (v2) -- (u2);
\path[edge] (u1) -- (v2) ;
\path[edge] (v1) -- (u1) ;
\path[edge] (u0) -- (v1);
\path[ledge] (v3) -- (v1);
\path[ledge] (v4) -- (v2);
\path[ledge] (v0) -- (v3);
\path[ledge] (v1) -- (v4);
\path[ledge] (v2) -- (v0);
\end{tikzpicture}
}
\caption{Small $4$-regular ambi-nut digraphs with a Rose Window graph as underlying graph.}
\label{fig:daemons}
\end{figure}
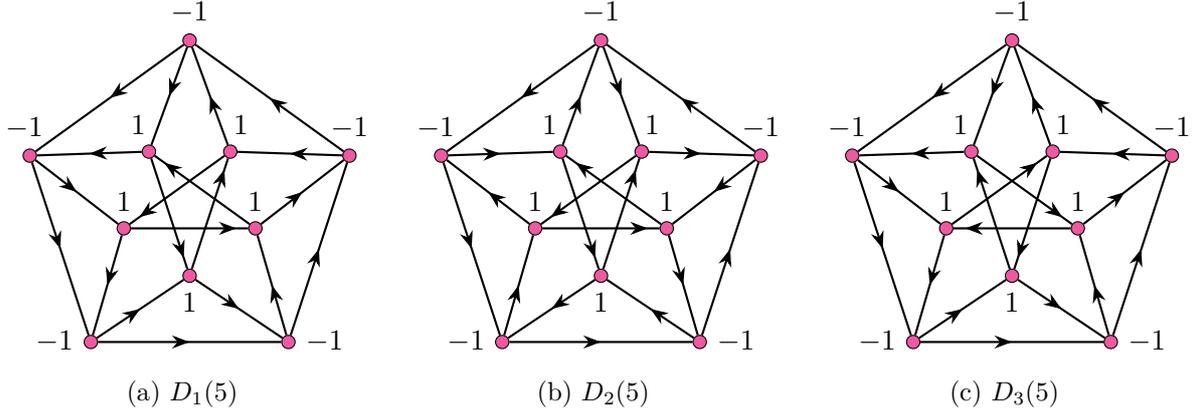

\begin{proposition}
The digraphs $D_1(n), D_2(n)$ and $D_3(n)$ are ambi-nut digraphs for every odd $n \geq 5$.
\end{proposition}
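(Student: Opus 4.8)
The plan is to follow the template of the proof of Proposition~\ref{prof:maxineFamily}: produce an explicit \emph{full} vector lying in both the kernel and the co-kernel, and then show by a short propagation argument that the kernel is one-dimensional (whence so is the co-kernel, since $\dim\Ker G = \dim\CoKer G$ for every digraph $G$). For each $k\in\{1,2,3\}$ and each odd $n\ge 5$, let $\x\in\RR^{V(D_k(n))}$ be defined by $\x(v_i)=1$ and $\x(u_i)=-1$ for all $i\in\ZZ_n$; this is the labelling shown in Figure~\ref{fig:daemons}. The key structural feature, read off directly from the three arc sets, is that in every one of $D_1(n),D_2(n),D_3(n)$ each out-neighbourhood and each in-neighbourhood of each vertex consists of exactly one vertex from $\{v_0,\ldots,v_{n-1}\}$ and exactly one from $\{u_0,\ldots,u_{n-1}\}$. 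Consequently the local conditions \eqref{eq:zerosum} and \eqref{eq:zerosumn} at every vertex reduce to $1+(-1)=0$, so $\x$ is full and lies in $\Ker D_k(n)\cap\CoKer D_k(n)$.

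Next I would prove that $\dim\Ker D_k(n)=1$. Let $\y\in\Ker D_k(n)$ and write out \eqref{eq:zerosum} at every $v_i$ and at every $u_i$; in each case these equations let one express the $u$-block of coordinates in terms of the $v$-block and vice versa. Eliminating one of the two blocks yields, for $D_1(n)$ and $D_3(n)$, the recurrence $\y(u_i)=\y(u_{i+2})$, and, for $D_2(n)$, the recurrence $\y(v_i)=\y(v_{i+4})$, valid for all $i$. Because $n$ is odd we have $\gcd(2,n)=\gcd(4,n)=1$, so iterating the recurrence forces $\y$ to be constant on the surviving block, and the remaining equations then propagate that constant to the other block. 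This gives $\y=c\,\x$ for some scalar $c$, so $\eta(D_k(n))=\dim\Ker D_k(n)=1$.

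It then remains only to assemble the pieces. By the previous step $\Ker D_k(n)=\langle\x\rangle$ is spanned by a full vector, so $D_k(n)$ is a dextro-nut digraph; and since $\dim\CoKer D_k(n)=\dim\Ker D_k(n)=1$ while $\x\in\CoKer D_k(n)$ by the first step, also $\CoKer D_k(n)=\langle\x\rangle$. Kernel and co-kernel are therefore spanned by one and the same full vector, which is exactly the definition of an ambi-nut digraph.

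The main point requiring care — and the reason the hypothesis is ``$n$ odd'' — is the propagation step: the recurrence relates indices two apart (four apart for $D_2(n)$), so it forces constancy precisely when $\gcd(2,n)=1$ (respectively $\gcd(4,n)=1$); for even $n$ the argument only forces $\y$ to be constant on each index-parity class, genuinely non-full kernel vectors appear, and so for even $n$ the digraph $D_k(n)$ need not even be dextro-nut. Beyond this, the work is routine bookkeeping of in- and out-neighbourhoods: the three arc sets differ only in the orientation of the $u$-cycle and of the two families of ``spokes'', so the three recurrences, while not literally the same, are all obtained and exploited in the same way.
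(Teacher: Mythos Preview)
Your proof is correct and follows essentially the same approach as the paper's: exhibit the full vector $\x(v_i)=1$, $\x(u_i)=-1$ in $\Ker D_k(n)\cap\CoKer D_k(n)$, and then verify $\dim\Ker D_k(n)=1$ for $n$ odd. In fact you supply more detail than the paper, which simply asserts that the kernel dimension is easy to verify; your propagation argument via the recurrences $\y(u_i)=\y(u_{i+2})$ (for $D_1,D_3$) and $\y(v_i)=\y(v_{i+4})$ (for $D_2$), together with the observation that $\gcd(2,n)=\gcd(4,n)=1$ for odd $n$, is exactly the kind of check the paper leaves to the reader.
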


\begin{proof}
The proof is similar to that of Proposition~\ref{prof:maxineFamily}, i.e. local conditions~\eqref{eq:zerosum} and~\eqref{eq:zerosumn} can be applied. 
A vector that spans the (co-)kernel in all three cases is $\x(v_i) = 1$ and $\x(u_i) = -1$ for $0 \leq i < n$.
It is easy to verify that $\dim \Ker D_1(n) = \dim \Ker D_2(n) = \dim \Ker D_3(n) = 1$ if $n$ is odd. Moreover, $\dim \Ker D_1(n) = \dim \Ker D_3(n) = 2$ if $n$ is even.
In the case of $D_2$ and even $n$ we have that $\dim \Ker D_2(n) = 4$ if $n \equiv 0 \pmod{4}$ and $\dim \Ker D_2(n) = 2$ if $n \not\equiv 0 \pmod{4}$.
\end{proof}

Note that $D_1(n), D_2(n)$ and $D_3(n)$ share the same underlying graph, the Rose Window graph $R_n(1, 2)$ \cite{Stitch2008}. 
In \cite{NutOrbitPaper} it was proved that $R_n(1, 2)$, $n \geq 5$, is a nut graph if and only if $n \not\equiv 0 \pmod{3}$. 

Ambi-nut digraphs can also be obtained as cartesian products of certain directed graphs which themselves are not
ambi-nut digraphs. Let $\vec{C}_n$ denote the directed cycle on $n$ vertices. Then we have:

\begin{proposition}
Let $m, n \geq 3$ and let $G = \vec{C}_n \cart \vec{C}_m$. The following statements are equivalent:
\begin{enumerate}[label=(\roman*)]
\item
$G$ is a dextro-nut digraph;
\item
$G$ is a laevo-nut digraph;
\item
$G$ is an ambi-nut digraph;
\item
 $mn \equiv 0 \pmod{2}$ and $\gcd(m, n) = 1$.
\end{enumerate}
\end{proposition}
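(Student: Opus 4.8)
The plan is to convert the three ``nut'' conditions into a single statement about the nullity $\eta(G)$, plus one explicit full vector. Order the vertices of $G$ as $\mathbb{Z}_n\times\mathbb{Z}_m$, so that $A(G)=A(\vec{C}_n)\otimes I_m+I_n\otimes A(\vec{C}_m)$. Each $A(\vec{C}_\ell)$ is the permutation matrix of a single $\ell$-cycle, hence diagonalisable with the $\ell$-th roots of unity as simple eigenvalues and the discrete-Fourier vectors $(\omega_\ell^{\,ja})_{a}$ as eigenvectors. Therefore the $nm$ tensor products of these Fourier vectors form an eigenbasis of $A(G)$, the pair $(j,k)$ carrying eigenvalue $\omega_n^{\,j}+\omega_m^{\,k}$. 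As a first step I would record that $\eta(G)$ (the geometric multiplicity of $0$, which here also equals the algebraic one since $A(G)$ is a sum of two commuting diagonalisable matrices) equals the number of pairs $(j,k)\in\mathbb{Z}_n\times\mathbb{Z}_m$ with $\omega_n^{\,j}=-\omega_m^{\,k}$.

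The main work — and the step I expect to be the principal obstacle — is the arithmetic count of those pairs. Passing to arguments, $\omega_n^{\,j}=-\omega_m^{\,k}$ is the condition $\tfrac{j}{n}-\tfrac{k}{m}\equiv\tfrac12\pmod 1$. The map $(j,k)\mapsto \tfrac{j}{n}-\tfrac{k}{m}$ is a homomorphism from $\mathbb{Z}_n\times\mathbb{Z}_m$ onto the cyclic subgroup of $\mathbb{Q}/\mathbb{Z}$ of order $L:=\operatorname{lcm}(m,n)$, so each non-empty fibre has exactly $nm/L=\gcd(m,n)$ elements, and $\tfrac12$ lies in the image iff $L$ is even, i.e.\ iff $mn$ is even. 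Hence $\eta(G)=\gcd(m,n)$ when $mn\equiv 0\pmod 2$ and $\eta(G)=0$ otherwise, so $\eta(G)=1$ exactly under condition~(iv). Since $\dim\Ker G=\dim\CoKer G=\eta(G)$, each of (i), (ii), (iii) forces $\eta(G)=1$ and therefore implies (iv); the care needed here is just getting the fibre size and the image-parity bookkeeping right, and checking that the eigenbasis argument really computes $\eta(G)$ over $\RR$ (it does, as rank is field-independent).

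For the converse I would avoid unwinding the Fourier description and instead exhibit the kernel vector directly. Under (iv) exactly one of $m,n$ is even; using $\vec{C}_n\cart\vec{C}_m\cong\vec{C}_m\cart\vec{C}_n$ I may assume $n$ is even, and set $\x_{(a,b)}=(-1)^a$, which is well defined (because $n$ is even) and full. The out-neighbours of $(a,b)$ are $(a+1,b)$ and $(a,b+1)$ with $(-1)^{a+1}+(-1)^a=0$, and the in-neighbours are $(a-1,b)$ and $(a,b-1)$ with the two entries again cancelling; so $\x$ satisfies both local conditions \eqref{eq:zerosum} and \eqref{eq:zerosumn}, i.e.\ $\x\in\Ker G\cap\CoKer G$. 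By the nullity computation $\eta(G)=1$, so $\x$ spans both $\Ker G$ and $\CoKer G$; they therefore coincide and are spanned by a full vector, which is precisely the ambi-nut condition~(iii). Finally (iii) trivially yields (i) and (ii), closing the chain of implications. (One could also note $G^R=\vec{C}_n^{\,R}\cart\vec{C}_m^{\,R}\cong G$, which makes laevo-nut equivalent to dextro-nut for these graphs, but the explicit vector renders this unnecessary.)
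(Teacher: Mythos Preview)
Your proof is correct and takes a genuinely different route from the paper. The paper works entirely in the real picture: from the local condition it derives the recurrence $\x(i,j)=-\x(i+1,j-1)$, analyses the subgroup $C=\langle(1,-1)\rangle\le\ZZ_n\times\ZZ_m$ and its index-$2$ subgroup $D=\langle(2,-2)\rangle$, and argues that a dextro-nut forces $C=V(G)$ (whence $\gcd(m,n)=1$) and $[C:D]=2$ (whence $mn$ even). You instead diagonalise $A(G)$ as a Kronecker sum and count the pairs $(j,k)$ with $\omega_n^{\,j}+\omega_m^{\,k}=0$ via the homomorphism $(j,k)\mapsto j/n-k/m$ into $\mathbb{Q}/\mathbb{Z}$.

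What each buys: your spectral argument yields more --- a closed formula $\eta(G)=\gcd(m,n)$ whenever $mn$ is even and $\eta(G)=0$ otherwise --- and extends transparently to products of longer directed cycles or other circulants. The paper's argument is more elementary (no complex roots of unity, no field-change remark) and stays closer to the combinatorial definition of the kernel, but only establishes $\eta(G)=1$ versus $\eta(G)\neq 1$. Both approaches converge on the same explicit full kernel vector for the forward direction.
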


\begin{proof}
Recall that $V(G) = \ZZ_n \times \ZZ_m$ with arcs of the form $(i, j) \to (i + 1, j)$ and $(i, j) \to (i, j + 1)$ where addition
in the first coordinate is done modulo $n$ and in the second modulo $m$. 

Since $G \cong G^R$, statements (i) and (ii) are equivalent. Moreover, (iii) implies (i) and (ii). Hence, it suffices to show that (i) implies (iv) and that (iv) implies (iii).

Let $\x \in \ker G$.  From the local condition \eqref{eq:zerosum} we get that 
\begin{equation}
\label{eq:groupStuff}
\x(i, j) = -\x(i + 1, j - 1).
\end{equation}
Let $C$ be the subgroup of $\ZZ_n \times \ZZ_m$ generated by $(1, -1)$ and let $D$ be the subgroup generated by $(2, -2)$. 
Let $a = \x(0, 0)$. From \eqref{eq:groupStuff} it follows that $\x(v) = a$ for $v \in D$ and $\x(v) = -a$ for $v \in C \setminus D$.

Suppose now that (i) holds. If $D = C$ then $a = 0$, which implies that $G$ is not a dextro-nut digraph, a contradiction. Hence, $D$ is a subgroup of $C$ of index $2$.
Let $\y \colon V(G) \to \RR$ such that
$\y(v) = 1$ for $v \in D$, $\y(v) = - 1$ for $v \in C \setminus D$ and $\y(v) = 0$ for $v \in V(G) \setminus C$. Then $\y \in \ker G$. Since $G$ is a dextro-nut digraph it follows
that $V(G) = C$. In particular, $\ZZ_n \times \ZZ_m$ is a cyclic group generated by the element $(1, -1)$. But then $\gcd(m, n) = 1$, and since $V(G)$ contains a subgroup
of index $2$, $mn$ is even. This proves that (i) implies (iv).

Suppose that (iv) holds. Then $C = V(G)$ and $D$ is a subgroup of $V(G)$ of index $2$ and thus $\y$, as defined above, is a full vector such that 
$\y \in \Ker G$ and $\y \in \CoKer G$. Since $(1, -1)$ generates the whole $V(G)$, \eqref{eq:groupStuff} implies $\y$ is a unique vector from $\ker G$ up to scalar multiplication.
Since $\eta(G) = 1$,  $G$ is an ambi-nut digraph, as required.
\end{proof}

We have found several families of ambi-nut digraphs whose underlying graphs are nut graphs. However, we have also encounted ambi-nut digraphs,
whose underlying graphs are not nut graphs. As the next proposition will show, in
this case the underlying graphs  are necessarily \emph{core graphs}.
In fact, this holds more generally, for the inter-nut digraphs.

\begin{proposition}
\label{prop:ambiCore}
If $G$ is an inter-nut digraph then its underlying graph $\Gamma$ is a core graph.
\end{proposition}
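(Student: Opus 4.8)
The plan is to show that the full vector witnessing the inter-nut property of $G$ is itself a kernel eigenvector of the underlying graph $\Gamma$. Since such a vector has no zero entry, every vertex of $\Gamma$ then lies in the support of a non-trivial element of $\Ker A(\Gamma)$; together with the fact that $\Gamma$ is consequently singular, this is exactly what it means for $\Gamma$ to be a core graph.

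In detail, fix a full vector $\x$ spanning $\Ker G \cap \CoKer G$. At every vertex $v$ the vector $\x$ then satisfies simultaneously the dextro-local condition~\eqref{eq:zerosum}, $\sum_{u \in G^+(v)} \x_u = 0$, and the laevo-local condition~\eqref{eq:zerosumn}, $\sum_{u \in G^-(v)} \x_u = 0$. The heart of the argument is the observation already depicted in Figure~\ref{fig:schematicLocal}: adding these two identities at $v$ yields $\sum_{u \in G^+(v)} \x_u + \sum_{u \in G^-(v)} \x_u = 0$, and since $G^+(v)$ and $G^-(v)$ are disjoint with union the $\Gamma$-neighbourhood of $v$, the left-hand side is precisely $(A(\Gamma)\x)_v$; equivalently, $A(\Gamma) = A(G) + A(G)^\intercal$, so $A(\Gamma)\x = A(G)\x + A(G)^\intercal\x = \nul$. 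Thus $\x \in \Ker A(\Gamma)$, and the conclusion follows at once: $\x \neq \nul$ forces $\eta(\Gamma) \geq 1$, so $\Gamma$ is singular, while fullness of $\x$ makes each coordinate $\x_v$ a non-zero entry of a non-trivial kernel eigenvector, so every vertex of $\Gamma$ is a core vertex, and hence $\Gamma$ is a core graph. (An equivalent contrapositive phrasing of the last step: if a vertex $w$ were core-forbidden, say $A(\Gamma)\q = \chi_w$, then $\x_w = \x^\intercal \chi_w = \x^\intercal A(\Gamma)\q = (A(\Gamma)\x)^\intercal \q = 0$, contradicting fullness of $\x$. One should also exclude the trivial digraph so that $\Gamma \neq K_1$: when $|V(G)| \geq 2$ the inter-nut hypothesis already rules out isolated vertices of $\Gamma$, since an isolated $w$ would put $\chi_w$ into $\Ker G \cap \CoKer G$ alongside the full vector $\x$.)

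The one step that genuinely needs care — and where I expect any difficulty to lie — is the identity $A(\Gamma) = A(G) + A(G)^\intercal$, i.e.\ the clean additivity of the two local conditions. This holds exactly when $G$ is an oriented digraph: a digon $u \to v$, $v \to u$ would place $u$ in both $G^+(v)$ and $G^-(v)$, hence count it twice in the summed local condition at $v$, even though it contributes only a single edge to the simple graph $\Gamma$. In the oriented setting — that of all the examples and enumerations above — the argument is then complete; for an arbitrary digraph one instead gets $A(\Gamma)\x = -A(S)\x$, where $S$ is the symmetric (digon) part of $G$, and one would additionally need $\x \in \Ker A(S)$ (i.e.\ every digon-neighbourhood sum of $\x$ to vanish), which the hypotheses do not by themselves supply.
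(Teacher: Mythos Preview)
Your approach is exactly the paper's: take a full vector $\x \in \Ker G \cap \CoKer G$, add the out-local condition~\eqref{eq:zerosum} to the in-local condition~\eqref{eq:zerosumn} at each vertex, and read off $\x \in \Ker \Gamma$. The paper states this in two lines; you spell out the matrix form $A(\Gamma)\x = A(G)\x + A(G)^\intercal \x$ and add the (harmless but superfluous) contrapositive and $K_1$ remarks.

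Your final paragraph, however, goes beyond the paper and is entirely justified. The identity $A(\Gamma) = A(G) + A(G)^\intercal$ indeed fails in the presence of digons, and the paper's proof silently relies on it just as yours does. This is not merely a cosmetic gap: the digraph in Figure~\ref{fig:nonOrientedAmbis}(b) is an ambi-nut (hence inter-nut) digraph whose underlying simple graph is $K_4$, which is non-singular and therefore not a core graph. So the proposition as stated is false for general digraphs and holds only for oriented digraphs, precisely the scope you identify. Since all uses of the proposition in the paper (the tournament argument and the enumeration strategy of Tables~\ref{tbl:ambiNutsImproved}--\ref{tbl:quarticAmbiNutsImproved}) are restricted to oriented graphs, nothing downstream is affected, but you have correctly located a genuine limitation that the paper's own proof does not acknowledge.
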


\begin{proof}
Let $G$ be an inter-nut digraph. Then there exists a full vector $\x \in \Ker G \cap \CoKer G$. Then \eqref{eq:zerosum}
and \eqref{eq:zerosumn} hold. These imply that $\sum_{u\in G^+(v)}\x(u) + \sum_{u\in G^-(v)}\x(u) = 0$. The latter is precisely the local condition for the underlying
graph $\Gamma$. Therefore $\x \in \Ker \Gamma$, which implies that $\Gamma$ is a core graph.
\end{proof}

In this survey of computational results, we also enumerated nut tournaments, i.e. nut digraphs whose underlying graph is $K_n$; see Table~\ref{tbl:enumTournaments}.  
As one can see, there are no ambi-nut tournaments listed in the table. This is a direct consequence of Proposition~\ref{prop:ambiCore}, as $K_n$ is not a core graph,
and therefore $\mathcal{A}(K_n) = \emptyset$ for all $n$. 

\begin{table}[!htbp]
\centering
\begin{tabular}{rrrrrrr}
\hline
$n$ & $|\mathcal{O}(K_n)|$ & $|\mathcal{D}(K_n)|$ & $|\mathcal{B}(K_n)|$ & $|\mathcal{A}(K_n)|$ \\
\hline\hline
4 & 4 & 1 & 0 & 0 \\
5 & 12 & 0 & 0 & 0 \\
6 & 56 & 3 & 0 & 0 \\
7 & 456 & 9 & 0 & 0 \\
8 & 6880 & 119 & 0 & 0 \\
9 & 191536 & 2373 & 10 & 0 \\
10 & 9733056 & 90782 & 567 & 0 \\
11 & 903753248 & 5918592 & 26629 & 0 \\
\hline
\end{tabular}
\caption{Enumeration of nut digraphs among tournaments on $n \leq 11$ vertices.}
\label{tbl:enumTournaments}
\end{table}

Proposition~\ref{prop:ambiCore} also suggests an improved strategy of enumerating ambi-nut digraphs. For a given order $n$, we first find all core
graphs on order $n$. Each core graph $\Gamma$ gives rise to a collection of oriented digraphs (for which $\Gamma$ is the underlying graph).
We can further restrict consideration to digraphs $G$ with $\delta^-(G) \ge 2$ and $\delta^+(G) \ge 2$ (see Lemma~\ref{lem:degreeBounds} in Section~\ref{sec:basic}).
Consequently, it is enough to generate core graphs $\Gamma$ with $\delta(\Gamma) \geq 4$.
Tables~\ref{tbl:ambiNutsImproved} and~\ref{tbl:quarticAmbiNutsImproved} show the results of such a search for ambi-nut digraphs based on general oriented cores and
$4$-regular oriented cores, respectively. The search on $4$-regular cores of odd order was performed only to illustrate the fact that $4$-regular cores of odd order may exist, 
even though ambi-nut digraphs of odd order do not.

Note that many core graphs
do not produce ambi-nut digraphs. See, for example, the graph $\Gamma^\dagger$ in Figure~\ref{fig:badAmbiCore}. As $\Gamma^\dagger$ is
a nut graph, the null space eigenvector is uniquely determined (up to scalar multiplication). The orientation of
edges in any ambi-nut digraph that has $\Gamma^\dagger$ as
underlying graph must  be consistent with this null space eigenvector.
This observation could be used to filter core graphs, and also to carry out substantial pruning of branches in the process of
generation of directed graphs.

\begin{table}[!htb]
\centering
\begin{tabular}{rrrrrrr}
\hline
$n$ & Core & Oriented & $|\mathcal{A}_n|$ & Good \\
\hline\hline
6 & 1 & 4 & 2 & 1 \\
7 & 1 & 26 & 1 & 1 \\
8 & 13 & 20958 & 104 & 10 \\
9 & 117 & 16677343 & 3371 & 68 \\
10 & 5299 & 65740041126 & 1404682 & 2544 \\
\hline
\end{tabular}
\caption{Enumeration of ambi-nut digraphs on $n \leq 10$ vertices by the method based on Proposition~\ref{prop:ambiCore}.
The column labelled `Core' gives the number of core graphs $\Gamma$ with $\delta(\Gamma) \geq 4$. The
`Oriented' column gives the number of oriented graphs $G$ that were obtained from the subset  satisfying the condition $\delta^-(G) \ge 2$ and $\delta^+(G) \ge 2$.
The column `Good' counts the graphs from the `Core' set that 
produce at least one ambi-nut digraph of the set $\mathcal{A}_n$.}
\label{tbl:ambiNutsImproved}
\end{table}

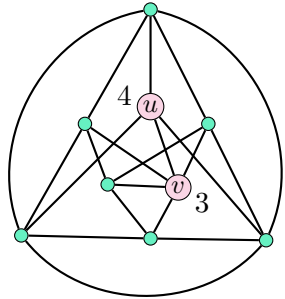
\begin{figure}[!htb]
\centering
\begin{tikzpicture}[scale=0.4]
\definecolor{aquamarine}{RGB}{102,241,194}
\tikzstyle{vertex}=[draw,circle,font=\small,minimum size=5pt,inner sep=1pt,fill=magenta!20!white]
\tikzstyle{edge}=[draw,thick]
\node[vertex,fill=aquamarine] (v0) at (-2.01, -1.01) {};
\node[vertex,label={[xshift=-2pt,yshift=4pt]-30:$3$}] (v1) at (0.31, -1.11) {$v$};
\node[vertex,fill=aquamarine] (v2) at (-2.757142857142857, 1) {};
\node[vertex,fill=aquamarine] (v3) at (1.30, 1) {};
\node[vertex,fill=aquamarine] (v4) at (-0.60, -2.80) {};
\node[vertex,fill=aquamarine] (v5) at (3.20, -2.86) {};
\node[vertex,label={[xshift=2pt,yshift=4pt]180:$4$}] (v6) at (-0.60, 1.57) {$u$};
\node[vertex,fill=aquamarine] (v7) at (-4.85, -2.70) {};
\node[vertex,fill=aquamarine] (v8) at (-0.60, 4.79) {};
\path[edge] (v0) -- (v1);
\path[edge] (v0) -- (v2);
\path[edge] (v0) -- (v3);
\path[edge] (v0) -- (v4);
\path[edge] (v1) -- (v4);
\path[edge] (v1) -- (v3);
\path[edge] (v1) -- (v2);
\path[edge] (v1) -- (v6);
\path[edge] (v6) -- (v5);
\path[edge] (v6) -- (v7);
\path[edge] (v6) -- (v8);
\path[edge] (v2) -- (v7);
\path[edge] (v2) -- (v8);
\path[edge] (v4) -- (v7);
\path[edge] (v4) -- (v5);
\path[edge] (v3) -- (v8);
\path[edge] (v3) -- (v5);
\path[edge] (v5) to[bend right=50] (v8);
\path[edge] (v8) to[bend right=50] (v7);
\path[edge] (v7) to[bend right=50] (v5);
\end{tikzpicture}
\caption{A `Bad' core graph, i.e., a nut graph $\Gamma^\dagger$ that is not the underlying graph of any ambi-nut digraph. 
The vertices labeled $u$ and $v$ carry respective entries $4$ and $3$ in the kernel eigenvector, while the rest carry entry $-1$. 
In a putative ambi-nut digraph obtained by orienting edges of $\Gamma^\dagger$,
vertex $u$ has $d^- (u) = d^+ (u) = 2$. From the entries of the kernel eigenvector, it is clear that this condition cannot be satisfied.}
\label{fig:badAmbiCore}
\end{figure}

\begin{table}[!htb]
\centering
\begin{tabular}{rrrrrrr}
\hline
$n$ & Core & Oriented & $|\mathcal{A}_n|$ & Good \\
\hline\hline
5 & 0 & 0 & 0 & 0 \\ 
6 & 1 & 4 & 2 & 1 \\
7 & 0 & 0 & 0 & 0 \\
8 & 5 & 47 & 5 & 3 \\
9 & 0 & 0 & 0 & 0 \\
10 & 21 & 1645 & 32 & 16 \\
11 & 0 & 0 & 0 & 0 \\
12 & 446 & 146371 & 860 & 225 \\
13 & 0 & 0 & 0 & 0 \\
14 & 20794 & ? & ? & ? \\
15 & 4 & 4945 & 0 & 0 \\
\hline
\end{tabular}
\caption{Enumeration of $4$-regular ambi-nut digraphs on $n \leq 15$ vertices by the method based on Proposition~\ref{prop:ambiCore}. The column `Core' gives the number of $4$-regular core graphs. The
column `Oriented' gives the number of oriented graphs $G$ that were obtained from the subset  that satisfy $\delta^-(G) = 2$ and $\delta^+(G) = 2$.
The column `Good' counts the graphs from `Core' that 
produce at least one ambi-nut digraph of the set $\mathcal{A}_n$.}
\label{tbl:quarticAmbiNutsImproved}
\end{table}

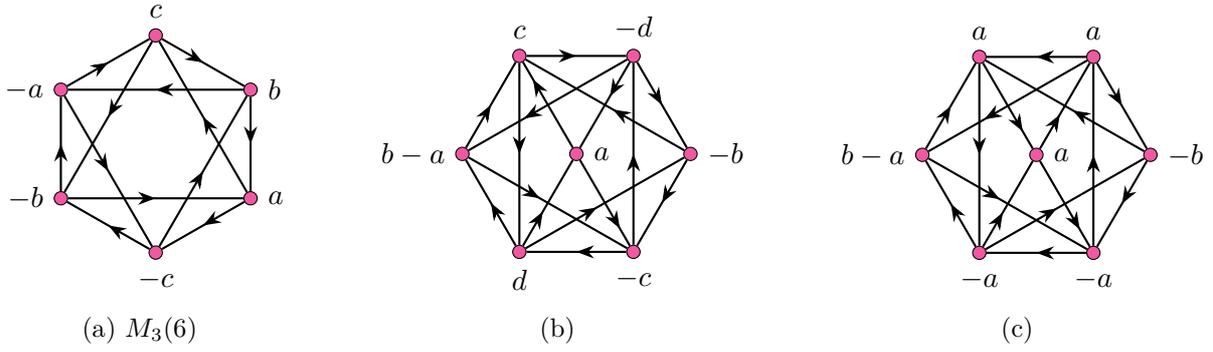
\begin{figure}[!htb]
\centering
\subcaptionbox{$M_3(6)$\label{fig:exampleM3_6}}{
\begin{tikzpicture}[scale=1.2]
\tikzstyle{vertex}=[draw,circle,font=\scriptsize,minimum size=5pt,inner sep=1pt,fill=magenta!80!white]
\tikzstyle{edge}=[draw,thick,decoration={markings,mark=at position 0.5 with {\arrow{Stealth}}},postaction=decorate]
\tikzstyle{sedge}=[draw,thick,decoration={markings,mark=at position 0.45 with {\arrow{Stealth}}},postaction=decorate]
\tikzstyle{ledge}=[draw,thick,decoration={markings,mark=at position 0.65 with {\arrow{Stealth}}},postaction=decorate]
\node[vertex,label=90:$c$] (v0) at (90:1.2) {};
\node[vertex,label=180:$-a$] (v1) at ({90 + 60}:1.2) {};
\node[vertex,label=180:$-b$] (v2) at ({90 + 60 * 2}:1.2) {};
\node[vertex,label=-90:$-c$] (v3) at ({90 + 60 * 3}:1.2) {};
\node[vertex,label=0:$a$] (v4) at ({90 + 60 * 4}:1.2) {};
\node[vertex,label=0:$b$] (v5) at ({90 + 60 * 5}:1.2) {};
\path[edge] (v1) -- (v0);
\path[edge] (v2) -- (v1);
\path[edge] (v3) -- (v2);
\path[edge] (v4) -- (v3);
\path[edge] (v5) -- (v4);
\path[edge] (v0) -- (v5);
\path[edge] (v0) -- (v2);
\path[edge] (v1) -- (v3);
\path[edge] (v2) -- (v4);
\path[edge] (v3) -- (v5);
\path[edge] (v4) -- (v0);
\path[edge] (v5) -- (v1);
\end{tikzpicture}}
\qquad
\subcaptionbox{\label{fig:example_core4}}{
\begin{tikzpicture}[scale=1.5]
\tikzstyle{vertex}=[draw,circle,font=\scriptsize,minimum size=5pt,inner sep=1pt,fill=magenta!80!white]
\tikzstyle{edge}=[draw,thick,decoration={markings,mark=at position 0.5 with {\arrow{Stealth}}},postaction=decorate]
\tikzstyle{sedge}=[draw,thick,decoration={markings,mark=at position 0.45 with {\arrow{Stealth}}},postaction=decorate]
\tikzstyle{ssedge}=[draw,thick,decoration={markings,mark=at position 0.4 with {\arrow{Stealth}}},postaction=decorate]
\tikzstyle{ledge}=[draw,thick,decoration={markings,mark=at position 0.65 with {\arrow{Stealth}}},postaction=decorate]
\tikzstyle{lledge}=[draw,thick,decoration={markings,mark=at position 0.8 with {\arrow{Stealth}}},postaction=decorate]
\node[vertex,label=180:$b-a$] (v0) at (180:1) {};
\node[vertex,label=0:$-b$] (v1) at (0:1) {};
\node[vertex,label=90:$-d$] (v5) at (60:1) {};
\node[vertex,label=90:$c$] (v4) at (120:1) {};
\node[vertex,label=-90:$-c$] (v3) at (-60:1) {};
\node[vertex,label=-90:$d$] (v6) at (-120:1) {};
\node[vertex,label=0:$a$] (v2) at (0, 0) {};
\path[edge] (v4) -- (v6);
\path[lledge] (v2) -- (v4);
\path[sedge] (v1) -- (v4);
\path[edge] (v4) -- (v5);
\path[ledge] (v5) -- (v0);
\path[edge] (v0) -- (v4);
\path[sedge] (v0) -- (v3);
\path[ssedge] (v5) -- (v2);
\path[ssedge] (v6) -- (v2);
\path[edge] (v5) -- (v1);
\path[edge] (v1) -- (v3);
\path[edge] (v6) -- (v0);
\path[edge] (v3) -- (v6);
\path[sedge] (v6) -- (v1);
\path[lledge] (v2) -- (v3);
\path[edge] (v3) -- (v5);
\end{tikzpicture}}
\qquad
\subcaptionbox{\label{fig:example_core2}}{
\begin{tikzpicture}[scale=1.5]
\tikzstyle{vertex}=[draw,circle,font=\scriptsize,minimum size=5pt,inner sep=1pt,fill=magenta!80!white]
\tikzstyle{edge}=[draw,thick,decoration={markings,mark=at position 0.5 with {\arrow{Stealth}}},postaction=decorate]
\tikzstyle{sedge}=[draw,thick,decoration={markings,mark=at position 0.45 with {\arrow{Stealth}}},postaction=decorate]
\tikzstyle{ssedge}=[draw,thick,decoration={markings,mark=at position 0.4 with {\arrow{Stealth}}},postaction=decorate]
\tikzstyle{ledge}=[draw,thick,decoration={markings,mark=at position 0.65 with {\arrow{Stealth}}},postaction=decorate]
\tikzstyle{lledge}=[draw,thick,decoration={markings,mark=at position 0.8 with {\arrow{Stealth}}},postaction=decorate]
\node[vertex,label=180:$b-a$] (v0) at (180:1) {};
\node[vertex,label=0:$-b$] (v1) at (0:1) {};
\node[vertex,label=90:$a$] (v5) at (60:1) {};
\node[vertex,label=90:$a$] (v4) at (120:1) {};
\node[vertex,label=-90:$-a$] (v3) at (-60:1) {};
\node[vertex,label=-90:$-a$] (v6) at (-120:1) {};
\node[vertex,label=0:$a$] (v2) at (0, 0) {};
\path[edge] (v4) -- (v6);
\path[lledge] (v4) -- (v2);
\path[sedge] (v1) -- (v4);
\path[edge] (v5) -- (v4);
\path[ledge] (v5) -- (v0);
\path[edge] (v0) -- (v4);
\path[sedge] (v0) -- (v3);
\path[ssedge] (v2) -- (v5);
\path[ssedge] (v6) -- (v2);
\path[edge] (v5) -- (v1);
\path[edge] (v1) -- (v3);
\path[edge] (v6) -- (v0);
\path[edge] (v3) -- (v6);
\path[sedge] (v6) -- (v1);
\path[lledge] (v2) -- (v3);
\path[edge] (v3) -- (v5);
\end{tikzpicture}}
\caption{Small examples of ambi-core digraphs with nullity strictly greater than $1$. 
Letters $a, b, c, \ldots$ are used to indicate a choice of independent parameters spanning the nullspace of each digraph.
The digraph $M_3(6)$ has nullity $3$ and
is the unique such digraph on $6$ vertices. There are four such digraphs on $7$ vertices. They all have the same underlying
graph. One (b) has nullity $4$, and the three others, one of which is shown in (c), have nullity $2$. Note that the ambi-nut digraph in
Figure~\ref{fig:smallGeneral}(d) has the same underlying graph.}
\label{fig:ambiCores}
\end{figure}

Furthermore, the notion of a core graph extends naturally to digraphs as follows. A digraph $G$ is dextro-core (resp.\ laevo-core) if $\Ker G$ (resp.\ $\CoKer G$) contains a full vector. 
A digraph $G$ is bi-core if it is both dextro-core and laevo-core. A digraph $G$ is ambi-core if $\Ker G = \CoKer G$ and $\Ker G$ contains a full vector. A digraph $G$ is 
inter-core if $\Ker G \cap \CoKer G$ contains a full vector. In Figure~\ref{fig:ambiCores} we show some small examples
of ambi-core digraphs with nullity greater than $1$.

The searches carried out to produce Tables~\ref{tbl:enumGeneral}--\ref{tbl:quarticAmbiNutsImproved} were deliberately limited to oriented graphs. 
If this restriction is lifted, ambi-nut digraphs that contain pairs of oppositely oriented arcs appear from small order.
See Figure~\ref{fig:nonOrientedAmbis} for examples. On $4$ vertices, there exists one ambi-nut digraph that is not an
oriented graph. On $6$ vertices, there are 14 ambi-nut digraphs that are not oriented graphs. Three of them are shown in the figure.
Of course, every (undirected) graph is in fact a directed graph where each edge is now viewed as a pair of oppositely oriented arcs.
Thus, every (undirected) nut graph is in fact an ambi-nut digraph whose arcs all appear in pairs.

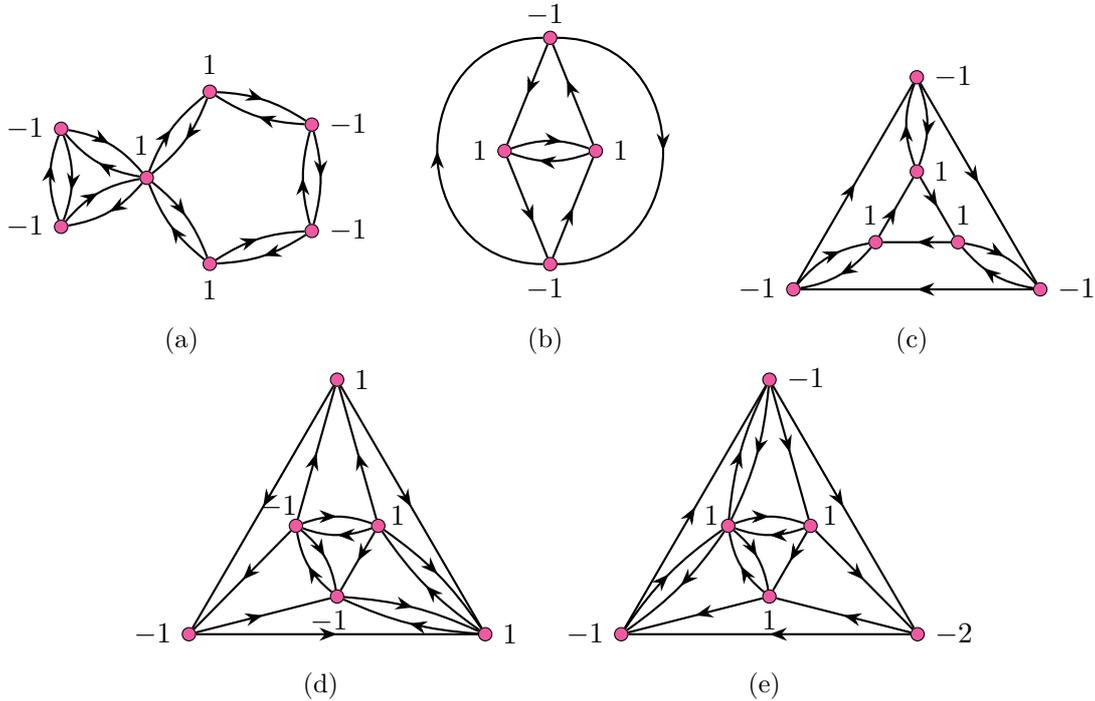
\begin{figure}[!htbp]
\centering
\subcaptionbox{}{
\begin{tikzpicture}[scale=1.0]
\tikzstyle{vertex}=[draw,circle,font=\scriptsize,minimum size=5pt,inner sep=1pt,fill=magenta!80!white]
\tikzstyle{edge}=[draw,thick,decoration={markings,mark=at position 0.5 with {\arrow{Stealth}}},postaction=decorate]
\tikzstyle{sedge}=[draw,thick,decoration={markings,mark=at position 0.45 with {\arrow{Stealth}}},postaction=decorate]
\tikzstyle{ledge}=[draw,thick,decoration={markings,mark=at position 0.65 with {\arrow{Stealth}}},postaction=decorate]
\node[vertex,label={[xshift=-2pt,yshift=2pt]90:$1$}] (v6) at ({180}:1.2) {};
\node[vertex,label=-90:$1$] (v3) at ({180 + 72}:1.2) {};
\node[vertex,label=0:$-1$] (v0) at ({180 + 72 * 2}:1.2) {};
\node[vertex,label=0:$-1$] (v5) at ({180 + 72 * 3}:1.2) {};
\node[vertex,label=90:$1$] (v2) at ({180 + 72 * 4}:1.2) {};
\node[vertex,label=180:$-1$] (v1) at ($ (v6) + (150:1.3) $) {};
\node[vertex,label=180:$-1$] (v4) at ($ (v6) + (210:1.3) $) {};
\path[ledge] (v1) to[bend left=20] (v4);
\path[ledge] (v4) to[bend left=20] (v1);
\path[edge] (v1) to[bend left=20] (v6);
\path[edge] (v6) to[bend left=20] (v1);
\path[edge] (v6) to[bend left=20] (v4);
\path[edge] (v4) to[bend left=20] (v6);
\path[edge] (v6) to[bend left=15] (v3);
\path[edge] (v3) to[bend left=15] (v6);
\path[edge] (v2) to[bend left=15] (v6);
\path[edge] (v6) to[bend left=15] (v2);
\path[edge] (v2) to[bend left=15] (v5);
\path[edge] (v5) to[bend left=15] (v2);
\path[edge] (v0) to[bend left=15] (v3);
\path[edge] (v3) to[bend left=15] (v0);
\path[edge] (v0) to[bend left=15] (v5);
\path[edge] (v5) to[bend left=15] (v0);
\end{tikzpicture}}
\subcaptionbox{}{
\begin{tikzpicture}[scale=1.5]
\tikzstyle{vertex}=[draw,circle,font=\scriptsize,minimum size=5pt,inner sep=1pt,fill=magenta!80!white]
\tikzstyle{edge}=[draw,thick,decoration={markings,mark=at position 0.5 with {\arrow{Stealth}}},postaction=decorate]
\tikzstyle{ledge}=[draw,thick,decoration={markings,mark=at position 0.65 with {\arrow{Stealth}}},postaction=decorate]
\node[vertex,label={[xshift=-2pt,yshift=-2pt]90:$-1$}] (v0) at (0, 1) {};
\node[vertex,label={[xshift=-2pt,yshift=1pt]-90:$-1$}] (v1) at (0, -1) {};
\node[vertex,label=180:$1$] (v2) at (-0.4, 0) {};
\node[vertex,label=0:$1$] (v3) at (0.4, 0) {};
\path[edge] (v0) -- (v2);
\path[ledge] (v2) -- (v1);
\path[ledge] (v3) -- (v0);
\path[edge] (v1) -- (v3);
\path[ledge] (v2) to [bend left=20] (v3);
\path[ledge] (v3) to [bend left=20] (v2);
\path[edge] (v0) .. controls ($ (v0) + (1.3, 0) $) and ($ (v1)  + (1.3, 0) $) .. (v1);
\path[edge] (v1) .. controls ($ (v1) + (-1.3, 0) $) and ($ (v0) + (-1.3, 0) $) .. (v0);
\end{tikzpicture}
}
\subcaptionbox{}{
\begin{tikzpicture}[scale=1.25] 
\tikzstyle{vertex}=[draw,circle,font=\scriptsize,minimum size=5pt,inner sep=1pt,fill=magenta!80!white]
\tikzstyle{edge}=[draw,thick,decoration={markings,mark=at position 0.5 with {\arrow{Stealth}}},postaction=decorate]
\tikzstyle{ledge}=[draw,thick,decoration={markings,mark=at position 0.65 with {\arrow{Stealth}}},postaction=decorate]
\node[vertex,label={0:$1$}] (v0) at (90:0.5) {};
\node[vertex,label={[xshift=-2pt,yshift=0pt]90:$1$}] (v1) at (210:0.5) {};
\node[vertex,label={[xshift=2pt,yshift=0pt]90:$1$}] (v2) at (-30:0.5) {};
\node[vertex,label=0:$-1$] (v3) at (90:1.5) {};
\node[vertex,label=180:$-1$] (v4) at (210:1.5) {};
\node[vertex,label=0:$-1$] (v5) at (-30:1.5) {};
\path[edge] (v1) -- (v0);
\path[edge] (v0) -- (v2);
\path[edge] (v2) -- (v1);
\path[edge] (v4) -- (v3);
\path[edge] (v3) -- (v5);
\path[edge] (v5) -- (v4);
\path[edge] (v0) to [bend left=20] (v3);
\path[ledge] (v3) to [bend left=20] (v0);
\path[edge] (v1) to [bend left=20] (v4);
\path[ledge] (v4) to [bend left=20] (v1);
\path[edge] (v2) to [bend left=20] (v5);
\path[ledge] (v5) to [bend left=20] (v2);
\end{tikzpicture}
}

\subcaptionbox{}{
\begin{tikzpicture}[scale=1.25]
\tikzstyle{vertex}=[draw,circle,font=\scriptsize,minimum size=5pt,inner sep=1pt,fill=magenta!80!white]
\tikzstyle{edge}=[draw,thick,decoration={markings,mark=at position 0.5 with {\arrow{Stealth}}},postaction=decorate]
\tikzstyle{sedge}=[draw,thick,decoration={markings,mark=at position 0.45 with {\arrow{Stealth}}},postaction=decorate]
\tikzstyle{ledge}=[draw,thick,decoration={markings,mark=at position 0.65 with {\arrow{Stealth}}},postaction=decorate]
\node[vertex,label=0:$1$] (v5) at (90:1.8) {};
\node[vertex,label={[xshift=7pt,yshift=7pt]180:$-1$}] (v3) at ({90 + 60}:0.5) {};
\node[vertex,label=180:$-1$] (v1) at ({90 + 60 * 2}:1.8) {};
\node[vertex,label={[xshift=-3pt,yshift=1pt]-90:$-1$}] (v4) at ({90 + 60 * 3}:0.5) {};
\node[vertex,label=0:$1$] (v2) at ({90 + 60 * 4}:1.8) {};
\node[vertex,label={[xshift=-2pt,yshift=4pt]0:$1$}] (v0) at ({90 + 60 * 5}:0.5) {};
\path[edge] (v3) -- (v5);
\path[edge] (v0) -- (v5);
\path[edge] (v5) -- (v1);
\path[edge] (v5) -- (v2);
\path[edge] (v1) -- (v2);
\path[edge] (v1) -- (v4);
\path[edge] (v3) -- (v1);
\path[edge] (v0) -- (v4);
\path[edge] (v3) to [bend left=20] (v0);
\path[edge] (v0) to [bend left=20] (v3);
\path[edge] (v3) to [bend left=20] (v4);
\path[edge] (v4) to [bend left=20] (v3);
\path[edge] (v0) to [bend left=10] (v2);
\path[edge] (v2) to [bend left=10] (v0);
\path[edge] (v4) to [bend left=10] (v2);
\path[edge] (v2) to [bend left=10] (v4);
\end{tikzpicture}
}
\subcaptionbox{}{
\begin{tikzpicture}[scale=1.25]
\tikzstyle{vertex}=[draw,circle,font=\scriptsize,minimum size=5pt,inner sep=1pt,fill=magenta!80!white]
\tikzstyle{edge}=[draw,thick,decoration={markings,mark=at position 0.5 with {\arrow{Stealth}}},postaction=decorate]
\tikzstyle{sedge}=[draw,thick,decoration={markings,mark=at position 0.45 with {\arrow{Stealth}}},postaction=decorate]
\tikzstyle{ledge}=[draw,thick,decoration={markings,mark=at position 0.65 with {\arrow{Stealth}}},postaction=decorate]
\node[vertex,label=0:$-1$] (v3) at (90:1.8) {};
\node[vertex,label={[xshift=3pt,yshift=4pt]180:$1$}] (v0) at ({90 + 60}:0.5) {};
\node[vertex,label=180:$-1$] (v4) at ({90 + 60 * 2}:1.8) {};
\node[vertex,label={[xshift=0pt,yshift=1pt]-90:$1$}] (v2) at ({90 + 60 * 3}:0.5) {};
\node[vertex,label=0:$-2$] (v1) at ({90 + 60 * 4}:1.8) {};
\node[vertex,label={[xshift=-2pt,yshift=4pt]0:$1$}] (v5) at ({90 + 60 * 5}:0.5) {};
\path[edge] (v3) -- (v5);
\path[edge] (v3) -- (v1);
\path[edge] (v5) -- (v2);
\path[edge] (v5) -- (v1);
\path[edge] (v1) -- (v4);
\path[edge] (v1) -- (v2);
\path[edge] (v4) -- (v3);
\path[edge] (v2) -- (v4);
\path[edge] (v5) to [bend left=20] (v0);
\path[edge] (v0) to [bend left=20] (v5);
\path[edge] (v2) to [bend left=20] (v0);
\path[edge] (v0) to [bend left=20] (v2);
\path[edge] (v0) to [bend left=10] (v3);
\path[edge] (v3) to [bend left=10] (v0);
\path[edge] (v0) to [bend left=10] (v4);
\path[edge] (v4) to [bend left=10] (v0);
\end{tikzpicture}
}
\caption{Small ambi-nut digraphs with pairs of oppositely oriented arcs. 
Panel (a) shows the Sciriha graph $S_1$ as we view it in the theory of directed graphs.
Panel (b) shows the unique example on $4$ vertices, while (c)--(e) show three out of
$14$ examples on $6$ vertices.}
\label{fig:nonOrientedAmbis}
\end{figure} 

Finally, in this survey of examples, we consider inter-nut digraphs $G$ that are not ambi-nut digraphs, i.e. $\dim \Ker (G) > 1$.
There are $2$ such inter-nut digraphs on $6$ vertices, and $27$ on $7$ vertices.  These $27$ inter-nut digraphs can all be obtained from just $6$
different underlying graphs by choosing a suitable orientation. It happens that those $6$ underlying graphs include the three nut graphs on $7$ 
vertices, i.e. the three Sciriha graphs~\cite{NutOrbitPaper}.

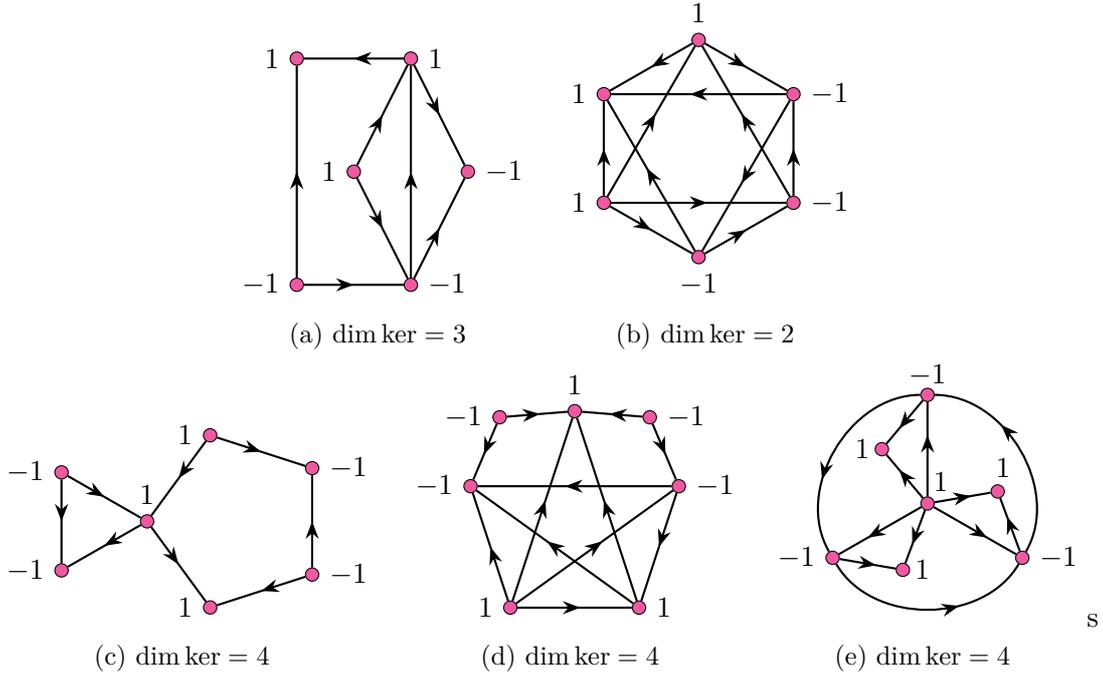
\begin{figure}[!htbp]
\centering
\subcaptionbox{$\dim\ker = 3$}{
\begin{tikzpicture}[scale=1.5]
\tikzstyle{vertex}=[draw,circle,font=\scriptsize,minimum size=5pt,inner sep=1pt,fill=magenta!80!white]
\tikzstyle{edge}=[draw,thick,decoration={markings,mark=at position 0.5 with {\arrow{Stealth}}},postaction=decorate]
\tikzstyle{sedge}=[draw,thick,decoration={markings,mark=at position 0.45 with {\arrow{Stealth}}},postaction=decorate]
\tikzstyle{ledge}=[draw,thick,decoration={markings,mark=at position 0.65 with {\arrow{Stealth}}},postaction=decorate]
\node[vertex,label=0:$1$] (v5) at (0, 2) {};
\node[vertex,label=0:$-1$] (v4) at (0, 0) {};
\node[vertex,label=0:$-1$] (v2) at (0.5, 1) {};
\node[vertex,label=-180:$1$] (v1) at (-0.5, 1) {};
\node[vertex,label=180:$1$] (v3) at (-1, 2) {};
\node[vertex,label=180:$-1$] (v0) at (-1, 0) {};
\path[edge] (v0) -- (v3);
\path[edge] (v0) -- (v4);
\path[edge] (v4) -- (v5);
\path[edge] (v4) -- (v2);
\path[edge] (v5) -- (v3);
\path[edge] (v5) -- (v2);
\path[edge] (v1) -- (v4);
\path[edge] (v1) -- (v5);
\end{tikzpicture}
}
\subcaptionbox{$\dim\ker = 2$}{
\begin{tikzpicture}[scale=1.2]
\tikzstyle{vertex}=[draw,circle,font=\scriptsize,minimum size=5pt,inner sep=1pt,fill=magenta!80!white]
\tikzstyle{edge}=[draw,thick,decoration={markings,mark=at position 0.5 with {\arrow{Stealth}}},postaction=decorate]
\tikzstyle{sedge}=[draw,thick,decoration={markings,mark=at position 0.45 with {\arrow{Stealth}}},postaction=decorate]
\tikzstyle{ledge}=[draw,thick,decoration={markings,mark=at position 0.55 with {\arrow{Stealth}}},postaction=decorate]
\node[vertex,label=90:$1$] (v3) at (90:1.2) {};
\node[vertex,label=180:$1$] (v4) at ({90 + 60}:1.2) {};
\node[vertex,label=180:$1$] (v0) at ({90 + 60 * 2}:1.2) {};
\node[vertex,label=-90:$-1$] (v2) at ({90 + 60 * 3}:1.2) {};
\node[vertex,label=0:$-1$] (v5) at ({90 + 60 * 4}:1.2) {};
\node[vertex,label=0:$-1$] (v1) at ({90 + 60 * 5}:1.2) {};
\path[edge] (v0) -- (v4);
\path[ledge] (v0) -- (v3);
\path[edge] (v0) -- (v2);
\path[ledge] (v0) -- (v5);
\path[ledge] (v2) -- (v4);
\path[edge] (v2) -- (v5);
\path[edge] (v5) -- (v1);
\path[ledge] (v5) -- (v3);
\path[ledge] (v1) -- (v2);
\path[ledge] (v1) -- (v4);
\path[edge] (v3) -- (v1);
\path[edge] (v3) -- (v4);
\end{tikzpicture}
}

\subcaptionbox{$\dim\ker = 4$}{
\begin{tikzpicture}[scale=1.0]
\tikzstyle{vertex}=[draw,circle,font=\scriptsize,minimum size=5pt,inner sep=1pt,fill=magenta!80!white]
\tikzstyle{edge}=[draw,thick,decoration={markings,mark=at position 0.5 with {\arrow{Stealth}}},postaction=decorate]
\tikzstyle{sedge}=[draw,thick,decoration={markings,mark=at position 0.45 with {\arrow{Stealth}}},postaction=decorate]
\tikzstyle{ledge}=[draw,thick,decoration={markings,mark=at position 0.65 with {\arrow{Stealth}}},postaction=decorate]
\node[vertex,label=90:$1$] (v6) at ({180}:1.2) {};
\node[vertex,label=180:$1$] (v3) at ({180 + 72}:1.2) {};
\node[vertex,label=0:$-1$] (v0) at ({180 + 72 * 2}:1.2) {};
\node[vertex,label=0:$-1$] (v5) at ({180 + 72 * 3}:1.2) {};
\node[vertex,label=180:$1$] (v2) at ({180 + 72 * 4}:1.2) {};
\node[vertex,label=180:$-1$] (v1) at ($ (v6) + (150:1.3) $) {};
\node[vertex,label=180:$-1$] (v4) at ($ (v6) + (210:1.3) $) {};
\path[edge] (v1) -- (v4);
\path[edge] (v1) -- (v6);
\path[edge] (v6) -- (v4);
\path[edge] (v6) -- (v3);
\path[edge] (v2) -- (v6);
\path[edge] (v2) -- (v5);
\path[edge] (v0) -- (v3);
\path[edge] (v0) -- (v5);
\end{tikzpicture}
}
\subcaptionbox{$\dim\ker = 4$}{
\begin{tikzpicture}[scale=1.2]
\tikzstyle{vertex}=[draw,circle,font=\scriptsize,minimum size=5pt,inner sep=1pt,fill=magenta!80!white]
\tikzstyle{edge}=[draw,thick,decoration={markings,mark=at position 0.5 with {\arrow{Stealth}}},postaction=decorate]
\tikzstyle{sedge}=[draw,thick,decoration={markings,mark=at position 0.45 with {\arrow{Stealth}}},postaction=decorate]
\tikzstyle{ledge}=[draw,thick,decoration={markings,mark=at position 0.55 with {\arrow{Stealth}}},postaction=decorate]
\node[vertex,label=90:$1$] (v5) at ({90}:1.2) {};
\node[vertex,label=180:$-1$] (v4) at ({90 + 72}:1.2) {};
\node[vertex,label=180:$1$] (v0) at ({90 + 72 * 2}:1.2) {};
\node[vertex,label=0:$1$] (v6) at ({90 + 72 * 3}:1.2) {};
\node[vertex,label=0:$-1$] (v3) at ({90 + 72 * 4}:1.2) {};
\node[vertex,label=180:$-1$] (v2) at ({90 + 36}:1.4) {};
\node[vertex,label=0:$-1$] (v1) at ({90 -36}:1.4) {};
\path[edge] (v0) -- (v4);
\path[ledge] (v0) -- (v5);
\path[ledge] (v0) -- (v3);
\path[ledge] (v0) -- (v6);
\path[ledge] (v6) -- (v4);
\path[ledge] (v6) -- (v5);
\path[edge] (v3) -- (v6);
\path[ledge] (v3) -- (v4);
\path[ledge] (v2) -- (v4);
\path[ledge] (v2) -- (v5);
\path[ledge] (v1) -- (v3);
\path[ledge] (v1) -- (v5);
\end{tikzpicture}
}
\subcaptionbox{$\dim\ker = 4$}{
\begin{tikzpicture}[scale=1.2]
\tikzstyle{vertex}=[draw,circle,font=\scriptsize,minimum size=5pt,inner sep=1pt,fill=magenta!80!white]
\tikzstyle{edge}=[draw,thick,decoration={markings,mark=at position 0.5 with {\arrow{Stealth}}},postaction=decorate]
\tikzstyle{sedge}=[draw,thick,decoration={markings,mark=at position 0.45 with {\arrow{Stealth}}},postaction=decorate]
\tikzstyle{ledge}=[draw,thick,decoration={markings,mark=at position 0.65 with {\arrow{Stealth}}},postaction=decorate]
\node[vertex,label={[xshift=5pt,yshift=-2pt]90:$1$}] (v6) at (0, 0) {};
\node[vertex,label={[yshift=-3pt]90:$-1$}] (v5) at ({90}:1.2) {};
\node[vertex,label=180:$-1$] (v0) at ({90 +120}:1.2) {};
\node[vertex,label=0:$-1$] (v4) at ({90 - 120}:1.2) {};
\node[vertex,label={[xshift=2pt]180:$1$}] (v2) at ($ (90:0.6) + (180:0.5) $) {};
\node[vertex,label={[xshift=-2pt]0:$1$}] (v3) at ($ (210:0.6) + (300:0.5) $) {};
\node[vertex,label={[yshift=-1pt,xshift=2pt]90:$1$}] (v1) at ($ (-30:0.6) + (60:0.5) $) {};
\path[ledge] (v6) -- (v1);
\path[ledge] (v6) -- (v2);
\path[ledge] (v6) -- (v3);
\path[ledge] (v6) -- (v4);
\path[ledge] (v6) -- (v5);
\path[ledge] (v6) -- (v0);
\path[ledge] (v0) -- (v3);
\path[ledge] (v4) -- (v1);
\path[ledge] (v5) -- (v2);
\path[ledge] (v5) to [bend right=60] (v0);
\path[ledge] (v0) to [bend right=60] (v4);
\path[ledge] (v4) to [bend right=60] (v5);
\end{tikzpicture}}s
\caption{Examples of inter-nut digraphs that are not ambi-nut digraphs. Panels (a) and (b) show both such inter-nut digraphs on $6$ vertices. Panels (c) to (e) show three
of $27$ such inter-nut digraphs on $7$ vertices. Note that their underlying graphs are the Sciriha graphs $S_1$, $S_2$ and $S_3$, respectively~\cite{NutOrbitPaper}.}
\label{fig:examplesInternuts}
\end{figure}

\section{Basic properties of nut digraphs}
\label{sec:basic}

In the undirected universe, nut graphs are connected, non-bipartite and leafless. All these properties have their counterparts in the directed universe.
We establish some useful machinery which will be used in the present section and also invoked for some constructions in Section~\ref{sec:constructions}.
For a digraph $G$, a function $\x \colon V(G) \to \RR$ and a vertex $v$ of $G$, let
\begin{equation}
\label{eq:inAndOutLocal}
\Su_\x^+(v) = \sum_{u\in G^+(v)}\x(u)\quad \hbox{ and } \quad
\Su_\x^-(v) = \sum_{u\in G^-(v)}\x(u)
\end{equation}
and recall from \eqref{eq:zerosum} and \eqref{eq:zerosumn} that $\x\in\Ker G$ provided $\Su_\x^+(v) = 0$ for every $v\in V(G)$ and that $\x\in\CoKer G$ provided $\Su_\x^-(v) = 0$ for every $v\in V(G)$.

In the undirected universe, vertices of singular graphs may be
partitioned into \emph{core} and \emph{core-forbidden} vertices. Namely,  a vertex $v$ of a singular graph $G$ is a core vertex if there exists some $\x \in \Ker G$ such that
$\x(u) \neq 0$, otherwise $v$ is  a core-forbidden vertex (see \cite[Definition 1]{Sciriha2021}). 
In a further refinement, core-forbidden vertices are partitioned into \emph{middle} and \emph{upper}, accordingly as $\eta(G - v) = \eta(G)$ or $\eta(G - v) = \eta(G) + 1$.
As $\Ker G$ and $\CoKer G$ are not necessarily the same in the digraph universe, we
have to take this fact into consideration in framing corresponding definitions for digraphs.

\begin{definition}
Let $G$ be a singular digraph. A vertex $u \in V(G)$  is a \emph{dextro-core} vertex if there exists some $\x \in \Ker G$ such that
$\x(u) \neq 0$, otherwise $v$ is a \emph{dextro-core-forbidden} vertex. A vertex $u \in V(G)$  is a \emph{laevo-core} vertex if there exists 
some $\y \in \CoKer G$ such that $\y(u) \neq 0$, otherwise $v$ is a \emph{laevo-core-forbidden} vertex.
\end{definition}

The following lemma shows that if $G$ is a laevo-nut digraph,  then a function $\y$ happens to be an element of $\Ker G$ under slightly weaker assumptions.

\begin{lemma}
\label{lem:ext}
Let $G$ be a digraph and let $w \in V(G)$.
If a function $\y \in \RR^V$ satisfies the condition $\Su_\y^+(u) = 0$ for all $u\in V(G) \setminus \{w\}$ and if 
$w$ is a laevo-core vertex
then $\Su_\y^+(w) = 0$, and thus $\y \in \Ker G$. 
Similarly, if a function $\y\in \RR^V$ satisfies the condition $\Su_\y^-(u) = 0$ for all $u\in V(G) \setminus \{w\}$
and 
$w$ is a dextro-core vertex
then $\Su_\y^-(w) = 0$, and thus $\y\in \CoKer G$.
\end{lemma}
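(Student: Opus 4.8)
The plan is to use a ``sum over arcs'' identity that couples $\Ker G$ with $\CoKer G$ --- in matrix language, to read off $\z^\intercal A(G) \y$ in two ways. Start with the first assertion. Since $w$ is a laevo-core vertex, fix some $\z \in \CoKer G$ with $\z(w) \neq 0$; by definition of the co-kernel this means $\Su_\z^-(u) = 0$ for every $u \in V(G)$.

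Next I would evaluate the quantity $S := \sum_{v \in V(G)} \z(v)\,\Su_\y^+(v)$ in two different ways. Expanding the definition of $\Su_\y^+$ and regrouping the resulting double sum over the arc set of $G$ gives
\[
S = \sum_{(v,u)\in E(G)} \z(v)\,\y(u) = \sum_{u\in V(G)} \y(u)\!\!\sum_{v\in G^-(u)}\!\! \z(v) = \sum_{u\in V(G)} \y(u)\,\Su_\z^-(u) = 0,
\]
where the last equality uses $\z \in \CoKer G$. On the other hand, the hypothesis $\Su_\y^+(v) = 0$ for all $v \neq w$ annihilates every term of $S$ except the one indexed by $w$, so $S = \z(w)\,\Su_\y^+(w)$. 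Comparing the two evaluations yields $\z(w)\,\Su_\y^+(w) = 0$, and since $\z(w) \neq 0$ we conclude $\Su_\y^+(w) = 0$. Together with the hypothesis this says $\Su_\y^+(v) = 0$ for all $v \in V(G)$, i.e.\ $\y \in \Ker G$.

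For the second assertion I would simply apply the first one to the reverse digraph $G^R$: a function satisfies $\Su_\y^-(u) = 0$ in $G$ exactly when it satisfies $\Su_\y^+(u) = 0$ in $G^R$, we have $\CoKer G = \Ker G^R$ and $\Ker G = \CoKer G^R$, and a vertex is dextro-core in $G$ precisely when it is laevo-core in $G^R$; hence the hypotheses of the second assertion for $G$ are exactly the hypotheses of the first for $G^R$, and its conclusion $\y \in \CoKer G$ is $\y \in \Ker G^R$. I do not expect a genuine obstacle here; the only points that need a little care are the interchange of summation order (each arc $(v,u)$ is counted once, with $v$ ranging over $G^-(u)$ for fixed $u$) and stating the $G \leftrightarrow G^R$ duality precisely. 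If a matrix formulation is preferred, the hypothesis says $A(G)\y$ is supported on $\{w\}$, hence $A(G)\y = \Su_\y^+(w)\,\chi_w$, and then $0 = \bigl(\z^\intercal A(G)\bigr)\y = \z^\intercal\bigl(A(G)\y\bigr) = \Su_\y^+(w)\,\z(w)$ forces $\Su_\y^+(w) = 0$.
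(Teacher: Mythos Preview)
Your argument is correct and is essentially identical to the paper's own proof: both pick a co-kernel vector nonzero at $w$, evaluate the bilinear ``sum over arcs'' quantity (the paper writes it as $\langle \x,\y\rangle$, you as $S$ or $\z^\intercal A(G)\y$) in two ways, and then reduce the second assertion to the first via $G^R$.
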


\begin{proof}
For two functions $\w,\z \in \RR^V$, let
\begin{equation}
\label{eq:<>}
\langle \w,\z \rangle = \sum_{u,v \in V(G) \atop v\to u} \w(v)\z(u),
\end{equation}
and observe that
\begin{equation}
 \langle \x, \y\rangle = \sum_{u \in V(G)} \sum_{v\in G^-(u)} \x(v)\y(u) = \sum_{u \in V(G)} \y(u) \Su_\x^-(u) = 0.
\end{equation}
On the other hand,
\begin{equation}
 \langle \x, \y\rangle = \sum_{v \in V(G)} \sum_{u\in G^+(v)} \x(v)\y(u) = \sum_{v \in V(G) \setminus\{w\}}  \x(v) \Su_\y^+(v)  + \x(w) \Su_\y^+(w) = \x(w) \Su_\y^+(w).
\end{equation}
Therefore, $\x(w) \Su_\y^+(w)=0$, and since $\x(w) \not = 0$, we see that $\Su_\y^+(w) = 0$, as claimed.
The second claim of the lemma can be proved by applying the first claim to
 the reverse digraph $G^R$.
\end{proof}

\subsection{Connectedness and vertex deletion}

A digraph $G$ is said to be {\em connected} if its underlying graph is connected. 

\begin{lemma}
Every dextro-nut digraph, every laevo-nut digraph and every inter-nut digraph is connected.
\end{lemma}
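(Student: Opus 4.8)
The plan is to argue by contradiction. Suppose $G$ is a disconnected digraph that is a dextro-nut, laevo-nut, or inter-nut digraph, and write its underlying graph as a disjoint union $\Gamma = \Gamma_1 \sqcup \Gamma_2$ with $V_1 = V(\Gamma_1)$ and $V_2 = V(\Gamma_2)$ both non-empty. Since there are no arcs between $V_1$ and $V_2$ in $G$, the adjacency matrix $A(G)$ is block-diagonal with blocks $A(G[V_1])$ and $A(G[V_2])$, so $\Ker G = \Ker G[V_1] \oplus \Ker G[V_2]$, and likewise $\CoKer G = \CoKer G[V_1] \oplus \CoKer G[V_2]$. The key point is that a full vector spanning the (co-)kernel (or their intersection) restricts to a \emph{non-zero} element of each block's (co-)kernel, forcing the relevant kernel-type space to be at least two-dimensional, contradicting one-dimensionality.

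First I would handle the dextro-nut case. Let $\x$ be a full vector spanning $\Ker G$. Its restrictions $\x|_{V_1}$ and $\x|_{V_2}$ lie in $\Ker G[V_1]$ and $\Ker G[V_2]$ respectively, and both are non-zero because $\x$ is full and $V_1, V_2 \neq \emptyset$. Extending each restriction by zero gives two linearly independent vectors in $\Ker G$, so $\eta(G) = \dim \Ker G \ge 2$, contradicting the assumption that $\Ker G$ is one-dimensional. The laevo-nut case is identical after replacing $G$ by $G^R$ (recall a digraph is laevo-nut iff its reverse is dextro-nut, and $\Gamma$ disconnected implies the underlying graph of $G^R$ is disconnected as well). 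For the inter-nut case, let $\x$ be a full vector spanning $\Ker G \cap \CoKer G$; then $\x|_{V_1}$ lies in $\Ker G[V_1] \cap \CoKer G[V_1]$ and is non-zero, and similarly for $V_2$, so their zero-extensions are two independent vectors in $\Ker G \cap \CoKer G$, again a contradiction.

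There is no real obstacle here; the only mild subtlety is making sure the restrict-and-zero-extend operation is stated cleanly and that one observes it preserves membership in the kernel, the co-kernel, and hence their intersection, which follows immediately from the block structure of $A(G)$. One could phrase the whole argument uniformly by noting that in all three cases the relevant subspace $K$ of $\RR^V$ (namely $\Ker G$, $\CoKer G$, or $\Ker G \cap \CoKer G$) decomposes as $K = (K \cap \RR^{V_1}) \oplus (K \cap \RR^{V_2})$ under the block decomposition, and a full vector in $K$ has non-zero component in each summand, so $\dim K \ge 2$.
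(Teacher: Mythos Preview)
Your proof is correct and follows essentially the same approach as the paper: both arguments exploit the block-diagonal structure of $A(G)$ to zero out the full kernel vector on one component, producing a non-full element of the relevant space (the paper stops at one such vector, while you produce two independent ones, but the underlying idea is identical).
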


\begin{proof}
If $G$ is a dextro-nut digraph then there exists a full vector $\x \in \Ker G$. Let $V(G) = V_1 \cup V_2$, $V_1 \neq \emptyset$ and $V_2 \neq \emptyset$, such that
$G[V_1]$ is a connected component. (Then $G[V_2]$ is a disjoint union of one or more connected components.) Let us define a function $\y \colon V(G) \to \mathbb{R}$ by 
$\y(u) = \x(u)$ for $u \in V_1$ and $\y(u) = 0$ for $u \in V_2$. As~\eqref{eq:zerosum} holds for $\y$, we have that $\y \in \Ker G$. This contradicts the antecedent
that $G$ is a dextro-nut digraph. An analogous argument applies to laevo-nut digraphs. 

For inter-nut digraphs, the above arguments also apply with minor adjustments. Namely, one has to take $\x \in \Ker G \cap \CoKer G$ and observe that then the function 
$\y \in \Ker G \cap \CoKer G$ as it satisfies both~\eqref{eq:zerosum} and~\eqref{eq:zerosumn}.
\end{proof}

All examples of ambi-nut digraphs which we have encountered happen to be \emph{strongly connected} (i.e.\ there exists a directed path between every pair of vertices).
We introduce notions needed to prove that this is a general property of ambi-nut digraphs. First, we prove an elementary fact from linear algebra~\cite{Strang2023}.

\begin{lemma}
\label{thm:linAlgThm}
Let $A \in \mathbb{C}^{n \times m}$ (i.e.\ $A$ is an $n \times m$ matrix over the field $\mathbb{C}$). Let $B$ be a submatrix obtained from $A$ by deleting any row and any column.
Then
\begin{equation}
\eta(A) - 1 \leq \eta(B) \leq \eta(A) + 1 \quad \text{and} \quad \eta(A^\intercal) - 1 \leq \eta(B^\intercal) \leq \eta(A^\intercal) + 1.
\end{equation}
\end{lemma}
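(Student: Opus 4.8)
The plan is to split the passage from $A$ to $B$ into two elementary one-step moves --- deleting a single row, then deleting a single column --- and to bound the effect of each on the nullity separately. Throughout, view $A \in \mathbb{C}^{n\times m}$ as the linear map $\mathbb{C}^m \to \mathbb{C}^n$ it represents, and recall the rank--nullity identity $\eta(A) = m - \rank A$. Deleting the $i$-th row of $A$ is the same as post-composing $A$ with the coordinate projection $\pi\colon \mathbb{C}^n \to \mathbb{C}^{n-1}$ that forgets the $i$-th coordinate; its kernel is the $1$-dimensional line spanned by the $i$-th standard basis vector. Deleting the $j$-th column is the same as pre-composing $A$ with the inclusion $\iota\colon \mathbb{C}^{m-1}\hookrightarrow \mathbb{C}^m$ of the coordinate hyperplane $\{x_j = 0\}$. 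The submatrix $B$ is then exactly the composite $\pi A \iota$.

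First I would handle the row deletion. Since $\Ker(\pi A) = A^{-1}(\Ker \pi) \supseteq \Ker A$, we get $\eta(\pi A) \ge \eta(A)$; and the map $A$ induces an injection of the quotient $\Ker(\pi A)/\Ker A$ into $\Ker\pi$, so $\eta(\pi A) - \eta(A) \le \dim\Ker\pi = 1$. Hence $\eta(A) \le \eta(\pi A) \le \eta(A)+1$. (Equivalently, the row space of $\pi A$ is spanned by all but one of the rows of $A$, so its dimension drops by $0$ or $1$; combine with $\eta = m - \rank$.) Next, the column deletion: for \emph{any} linear map $L\colon \mathbb{C}^m \to \mathbb{C}^{n'}$ one has $\Ker(L\iota) = \iota^{-1}(\Ker L) = \Ker L \cap \{x_j = 0\}$, the intersection of $\Ker L$ with a hyperplane of $\mathbb{C}^m$, whence $\eta(L) - 1 \le \eta(L\iota) \le \eta(L)$. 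Applying this with $L = \pi A$ gives $\eta(\pi A) - 1 \le \eta(B) \le \eta(\pi A)$.

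Chaining the two estimates yields
\[
\eta(A) - 1 \;\le\; \eta(\pi A) - 1 \;\le\; \eta(B) \;\le\; \eta(\pi A) \;\le\; \eta(A) + 1,
\]
which is the first displayed inequality. For the second, observe that $B^\intercal$ is obtained from $A^\intercal$ by deleting one row (the one indexed by $j$) and one column (the one indexed by $i$); so the inequality just proved, applied with $A$ replaced by $A^\intercal$, gives $\eta(A^\intercal) - 1 \le \eta(B^\intercal) \le \eta(A^\intercal) + 1$.

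The argument is almost entirely routine; the one point that actually requires a moment's care is that the two one-step bounds move in \emph{opposite} directions (row deletion can only raise the nullity, column deletion only lower it), so one must not naively add the two $\pm 1$ errors into a $\pm 2$ swing --- it is precisely the monotonicity built into each step that keeps the net change to $\pm 1$. An alternative route via rank-interlacing inequalities for bordered matrices would also work, but the map-theoretic bookkeeping above seems the cleanest to write down.
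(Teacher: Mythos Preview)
Your proof is correct, and at the structural level it matches the paper's: both factor the passage $A \to B$ through an intermediate matrix obtained by a single row or column deletion and bound each step separately. The bookkeeping differs, though. The paper tracks \emph{rank}: adding back one row and one column each raise the rank by at most $1$, giving $\rank B \le \rank A \le \rank B + 2$, and then the rank--nullity theorem (together with the fact that $B$ has one fewer column than $A$) converts this $2$-unit rank window into the desired $\pm 1$ nullity window. You instead track \emph{nullity} directly and exploit the monotonicity you highlight --- row deletion can only raise $\eta$, column deletion can only lower it --- so the two one-step errors cannot accumulate and the $\pm 1$ bound falls out without the column-count correction. Your route is a little slicker for that reason; the paper's is perhaps more elementary in that it only appeals to ``adding a vector to a span changes the dimension by $0$ or $1$.'' Either way, the substance is the same two-step decomposition.
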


\begin{proof}
Without loss of generality, assume that we delete the first row and the first column. We can write
\begin{equation}
A = \begin{bmatrix}
 z & \mathbf{c}^\intercal \\
 \mathbf{d} & B 
\end{bmatrix},
\end{equation}
where $z \in \mathbb{C}$ and $\mathbf{c}, \mathbf{d} \in  \mathbb{C}^{(n - 1) \times 1}$. Let $D \coloneqq \begin{bsmallmatrix} \mathbf{c}^\intercal \\ B \end{bsmallmatrix}$.
When $\mathbf{c}^\intercal$ is added to the row-space of $B$, the rank is either unchanged or increases by $1$, i.e.\ $\rank(B) \leq \rank(D) \leq \rank(B) + 1$. When the vector
$\begin{bsmallmatrix} z \\ \mathbf{d} \end{bsmallmatrix}$ is added to the column-space of $D$, again, the rank is either unchanged or increases by $1$. Therefore, 
\begin{equation}
\rank(B) \leq \rank(A) \leq \rank(B) + 2.
\label{eq:rankStuff}
\end{equation}
Recall that $\rank(A) + \eta(A) = m$ and $\rank(A^\intercal) + \eta(A^\intercal) = n$. 
Similarly, $\rank(B) + \eta(B) = m - 1$ and $\rank(B^\intercal) + \eta(B^\intercal) = n - 1$.
Moreover, $\rank(A) = \rank(A)^\intercal$ and $\rank(B) = \rank(B)^\intercal$. From \eqref{eq:rankStuff} we obtain $\eta(A) - 1 \leq \eta(B) \leq \eta(A) + 1$ and, 
similarly, $\eta(A^\intercal) - 1 \leq \eta(B^\intercal) \leq \eta(A^\intercal) + 1$.
\end{proof}

Lemma~\ref{thm:linAlgThm} has an immediate consequence.

\begin{corollary}
\label{cor:nullityCorollary}
Let $G$ be a digraph of order $n \geq 2$ and $v \in V(G)$ an arbitrary vertex. Then 
\begin{equation}
\eta(G) - 1 \leq \eta(G - v) \leq \eta(G) + 1.
\end{equation}
\end{corollary}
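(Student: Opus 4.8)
The plan is to observe that deleting a vertex $v$ from $G$ corresponds, at the level of adjacency matrices, to deleting simultaneously the row and the column indexed by $v$ from $A(G)$, so that $A(G-v)$ is precisely the principal submatrix of $A(G)$ obtained in this way. This places the statement directly within the scope of Lemma~\ref{thm:linAlgThm}.

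Concretely, first I would set $A = A(G)$, an $n \times n$ matrix, and $B = A(G-v)$. Reordering the vertices if necessary so that $v$ is listed first, $A$ takes the block form used in the proof of Lemma~\ref{thm:linAlgThm}, with the scalar $z = A(G)_{vv}$ in the top-left corner; then $B$ is exactly the bottom-right block, obtained by deleting the first row and the first column of $A$. Next I would invoke Lemma~\ref{thm:linAlgThm} with this choice of $A$ and $B$, which yields $\eta(A) - 1 \leq \eta(B) \leq \eta(A) + 1$. Finally, since $\eta(G) = \dim \Ker G = \dim \Ker A(G) = \eta(A)$ and, using $n \geq 2$ so that $G - v$ is a nonempty digraph, $\eta(G-v) = \dim \Ker A(G-v) = \eta(B)$, the desired chain of inequalities follows at once.

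Since the corollary is an essentially routine specialisation of Lemma~\ref{thm:linAlgThm}, I do not expect a genuine obstacle; the only subtlety to flag explicitly is that a single vertex deletion removes one row and one column carrying the \emph{same} index, which is permitted by the lemma, as it allows the deletion of an arbitrary row together with an arbitrary column (in particular a matching pair). Note also that the companion bound $\eta(G^R) - 1 \leq \eta(G^R - v) \leq \eta(G^R) + 1$ comes for free from the transpose half of Lemma~\ref{thm:linAlgThm}, although we do not need it here since $\eta(A) = \eta(A^\intercal)$ for a square matrix.
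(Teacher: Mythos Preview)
Your proposal is correct and matches the paper's own treatment: the paper simply records the corollary as an immediate consequence of Lemma~\ref{thm:linAlgThm}, which is exactly the specialisation you carry out. Your additional remarks (reordering so $v$ is first, the $n\ge 2$ caveat, and the observation about the transpose bound) are fine elaborations but not needed beyond what the paper states.
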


It is known that the above corollary holds for undirected graphs (see, e.g., \cite[Theorem 1.2]{Ali2016}), where the usual proof relies on the Cauchy Interlacing Theorem for symmetric matrices,
which is not applicable in the directed universe.

In the undirected universe, if a core-vertex $v$ is deleted from a graph $G$,
then $\eta(G - v) = \eta(G) - 1$ (see, e.g., \cite[Corollary 13]{ScirihaCoalescence}), whereas if a core-forbidden vertex $v$ is deleted, then $\eta(G - v) = \eta(G)$ or $\eta(G - v) = \eta(G) + 1$
(see, e.g., \cite{Omniconductors}). We will now generalise this to digraphs.

\begin{lemma}
\label{lem:atLeastOneCore}
Let $G$ be a singular digraph with at least $2$ vertices and let $v \in V(G)$ be any vertex.
If $v$ is laevo-core then $\eta(G - v) \leq \eta(G)$. If, in addition, $v$ is dextro-core then $\eta(G - v)  = \eta(G) - 1$.
\end{lemma}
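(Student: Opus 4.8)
The plan is to exhibit an explicit injective linear map $\Phi \colon \Ker(G-v) \to \Ker G$ given by extension by zero, and then to compute its image exactly. First I would take an arbitrary $\x' \in \Ker(G-v)$ and define $\x \in \RR^V$ by $\x(u) = \x'(u)$ for $u \neq v$ and $\x(v) = 0$. For every $u \in V(G)\setminus\{v\}$ the out-neighbourhood of $u$ in $G-v$ is $G^+(u)\setminus\{v\}$, and since the possibly missing term would carry weight $\x(v)=0$, the sum $\Su_\x^+(u)$ computed in $G$ agrees with $\Su_{\x'}^+(u)$ computed in $G-v$ and hence vanishes. Thus $\x$ satisfies $\Su_\x^+(u)=0$ for all $u\neq v$, and since $v$ is laevo-core, Lemma~\ref{lem:ext} (with $w=v$) forces $\Su_\x^+(v)=0$ as well, so $\x\in\Ker G$. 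The map $\Phi\colon\x'\mapsto\x$ is visibly linear and injective, whence $\eta(G-v)=\dim\Ker(G-v)\le\dim\Ker G=\eta(G)$, which is the first assertion.

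For the second assertion I would identify $\im\Phi$. By construction every element of $\im\Phi$ is a kernel vector of $G$ vanishing at $v$; conversely, restricting any $\x\in\Ker G$ with $\x(v)=0$ to $V(G)\setminus\{v\}$ yields, by the same neighbourhood bookkeeping, an element of $\Ker(G-v)$ that $\Phi$ sends back to $\x$. Hence $\im\Phi=\{\x\in\Ker G:\x(v)=0\}$, i.e.\ the kernel of the evaluation functional $\x\mapsto\x(v)$ on $\Ker G$. When $v$ is in addition dextro-core, this functional is not identically zero on $\Ker G$, so its kernel has codimension exactly $1$, and therefore $\eta(G-v)=\dim\im\Phi=\eta(G)-1$. (This is of course consistent with the lower bound $\eta(G-v)\ge\eta(G)-1$ of Corollary~\ref{cor:nullityCorollary}, which could alternatively be invoked together with the first part to conclude.)

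I do not expect a genuine obstacle. The only delicate point is the bookkeeping of neighbourhoods when passing between $G$ and $G-v$: one must check that deleting $v$ removes exactly the single term $\x(v)$ — which is zero for the vectors in play — from each sum $\Su_\x^+(u)$, so that the local kernel conditions transfer in both directions. The real content of the argument is Lemma~\ref{lem:ext}, which converts the laevo-core hypothesis on $v$ into the fact that a function annihilated by every local out-condition except possibly the one at $v$ still lies in $\Ker G$; without it the extension $\x$ would only be known to satisfy all but one of the defining equations. An equivalent matrix-theoretic route would delete the corresponding row and column of $A(G)$ and use a left null vector with nonzero $v$-entry to control the kernel of the deleted matrix, but the operator/local-condition formulation meshes more directly with the machinery already set up in this section.
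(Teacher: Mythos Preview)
Your proof is correct and follows essentially the same approach as the paper: extend kernel vectors of $G-v$ by zero at $v$ and invoke Lemma~\ref{lem:ext} via the laevo-core hypothesis to land in $\Ker G$, then use the dextro-core hypothesis to find a kernel vector outside the image. Your framing via the evaluation functional $\x\mapsto\x(v)$ is a mild streamlining, since identifying $\im\Phi$ as its kernel gives codimension exactly $1$ directly, whereas the paper obtains only strict inequality $\eta'<\eta$ from the extra vector and then appeals to Corollary~\ref{cor:nullityCorollary} to close the gap.
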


\begin{proof}
Let $\eta = \eta(G)$ and $\eta' = \eta(G - v)$. Let 
$\{ \x_1, \x_2, \ldots, \x_{\eta'} \}$ be a basis for $\ker(G - v)$, and let $\widetilde{\x}_i \colon V(G) \to \mathbb{R}$, $1 \leq i \leq \eta'$, be defined by $\widetilde{\x}_i(u) = \x_i(u)$ for $u \neq v$
 and $\widetilde{\x}_i(v) = 0$. It is easy to see that $\widetilde{\x}_1, \widetilde{\x}_2, \ldots, \widetilde{\x}_{\eta'}$ are linearly independent and each of them satisfies the assumptions of Lemma~\ref{lem:ext}. Since $v$ is a laevo-core vertex there exists $\widetilde{\y} \in \CoKer G$ such that $\widetilde{\y}(v) \neq 0$. This allows us to use Lemma~\ref{lem:ext} to conclude that $\langle \widetilde{\x}_1, \ldots, \widetilde{\x}_{\eta'} \rangle \subseteq \Ker G$ and hence $\eta' \leq \eta$. 
 
 If $v$ is also dextro-core, then there exists $\widetilde{\x}_0 \in \Ker G$ such that $\widetilde{\x}_0 (v) \neq 0$. Then
 $\widetilde{\x}_0 \notin \langle \widetilde{\x}_1, \ldots, \widetilde{\x}_{\eta'} \rangle$, implying that $\eta'  < \eta$.
 From Corollary~\ref{cor:nullityCorollary} it follows that $\eta(G - v)  = \eta(G) - 1$, as claimed.
 \end{proof}
 
 \begin{lemma}
\label{lem:atLeastOneForbidden}
Let $G$ be a singular digraph with at least $2$ vertices and let $v \in V(G)$ be any vertex.
If $v$ is laevo-core-forbidden or dextro-core-forbidden then $\eta(G - v) \in \{ \eta(G), \eta(G) + 1 \}$.
\end{lemma}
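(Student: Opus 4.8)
The plan is to combine the two-sided bound from Corollary~\ref{cor:nullityCorollary} with a one-sided bound extracted directly from the core-forbidden hypothesis. Corollary~\ref{cor:nullityCorollary} already gives $\eta(G) - 1 \leq \eta(G - v) \leq \eta(G) + 1$, so it suffices to exclude the value $\eta(G) - 1$; that is, to show $\eta(G - v) \geq \eta(G)$ whenever $v$ is dextro-core-forbidden or laevo-core-forbidden.

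First I would treat the dextro-core-forbidden case. By definition this means $\x(v) = 0$ for every $\x \in \Ker G$. Fix a basis $\x_1, \ldots, \x_\eta$ of $\Ker G$, where $\eta = \eta(G)$, and let $\x_i'$ denote the restriction of $\x_i$ to $V(G - v) = V(G) \setminus \{w : w = v\}$. The key point is that each $\x_i'$ lies in $\Ker(G - v)$: for any $u \neq v$, the local sum $\Su_{\x_i'}^{+}(u)$ taken over the out-neighbours of $u$ in $G - v$ differs from $\Su_{\x_i}^{+}(u)$ only by the single term $\x_i(v)$ (which occurs exactly when $v \in G^+(u)$), and this term is zero; hence $\Su_{\x_i'}^{+}(u) = \Su_{\x_i}^{+}(u) = 0$. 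Moreover the $\x_i'$ are still linearly independent, since the $\x_i$ are independent and each $\x_i$ agrees with its restriction (all of them vanish at $v$). Therefore $\eta(G - v) \geq \eta = \eta(G)$, and combining with Corollary~\ref{cor:nullityCorollary} gives $\eta(G - v) \in \{\eta(G), \eta(G) + 1\}$.

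For the laevo-core-forbidden case I would pass to the reverse digraph. If $v$ is laevo-core-forbidden in $G$, then, because $\CoKer G = \Ker G^R$, the vertex $v$ is dextro-core-forbidden in $G^R$. Applying the previous paragraph to $G^R$ and using $\eta(G^R) = \eta(G)$ together with $\eta(G^R - v) = \eta\bigl((G - v)^R\bigr) = \eta(G - v)$ yields the same conclusion. Since the statement asks only that \emph{one} of the two disjuncts holds, handling each case separately is enough.

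The argument is essentially routine linear algebra; the only point I would be careful to spell out is the verification that restricting a kernel vector which vanishes at $v$ yields a genuine element of $\Ker(G - v)$, which is precisely where the core-forbidden hypothesis enters. Note that there is no analogue of Lemma~\ref{lem:ext} needed here: in that lemma one extends by zero and must invoke a core vertex on the other side to push a local condition through, whereas here one merely restricts, and the vanishing entry makes every local condition survive automatically.
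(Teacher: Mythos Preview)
Your proof is correct and follows essentially the same approach as the paper: restrict a basis of $\Ker G$ to $V(G - v)$, use the dextro-core-forbidden hypothesis to see that these restrictions are linearly independent elements of $\Ker(G - v)$, conclude $\eta(G - v) \geq \eta(G)$, and then invoke Corollary~\ref{cor:nullityCorollary}; the laevo case is handled by passing to $G^R$. Your write-up is slightly more explicit about why the local condition survives restriction, but the argument is identical in substance.
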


\begin{proof}
Let $\eta = \eta(G)$. 
Suppose first that $v$ is dextro-core-forbidden.
Let $\widetilde{\z}_1, \widetilde{\z}_2, \ldots, \widetilde{\z}_\eta$ be a basis of $\ker G$ and let  $\z_i \colon V(G - v) \to \mathbb{R}$, $1 \leq i \leq \eta$, be defined by $\z_i(u) = \widetilde{\z}_i(u)$ 
for $u \in V(G - v)$.
Since $v$ is dextro-core-forbidden, $\widetilde{\z}_1(v) = \cdots = \widetilde{\z}_\eta(v) = 0$, implying that $\z_1, \ldots, \z_\eta$ are linearly independent and
 $\langle  \z_1, \ldots, \z_\eta \rangle \subseteq \ker (G - v)$.  Therefore, $\eta(G - v) \geq \eta$. By Corollary~\ref{cor:nullityCorollary} it follows that $\eta(G - v) = \eta$ or $\eta(G - v) = \eta + 1$.
 
 If $v$ is laevo-core-forbidden, then $v$ is dextro-core-forbidden in the reverse digraph $G^R$. Since $\eta(G) = \eta(G^R)$, the results follows by the previous paragraph.
\end{proof}

\begin{theorem}
\label{thm:niceThm}
Let $G$ be a singular digraph of order $n \geq 2$ and let $v \in V(G)$ be any vertex.
\begin{enumerate}[label=(\roman*)]
\item 
If $v$ is dextro-core and laevo-core then $\eta(G - v) = \eta(G) - 1$.
\item 
If $v$ is dextro-core-forbidden and laevo-core then $\eta(G - v) = \eta(G)$.
\item 
If $v$ is dextro-core and laevo-core-forbidden then $\eta(G - v) = \eta(G)$.
\item 
If $v$ is dextro-core-forbidden and laevo-core-forbidden then $\eta(G - v) \in \{ \eta(G), \eta(G) + 1 \}$. Moreover, if $G$ is bipartite then $\eta(G - v) = \eta(G) + 1$.
\end{enumerate}
\end{theorem}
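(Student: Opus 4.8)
These are short deductions from the machinery already in place. Part~(i) is precisely the second sentence of Lemma~\ref{lem:atLeastOneCore}. For~(ii): since $v$ is laevo-core, the first sentence of Lemma~\ref{lem:atLeastOneCore} gives $\eta(G-v)\le\eta(G)$, while Lemma~\ref{lem:atLeastOneForbidden} applied to the dextro-core-forbidden vertex $v$ gives $\eta(G-v)\in\{\eta(G),\eta(G)+1\}$; together these force $\eta(G-v)=\eta(G)$. Part~(iii) reduces to (ii) by passing to the reverse digraph: since $\CoKer G^R=\Ker G$ and $\Ker G^R=\CoKer G$, a vertex is dextro-core (resp.\ laevo-core-forbidden) in $G$ exactly when it is laevo-core (resp.\ dextro-core-forbidden) in $G^R$; moreover $(G-v)^R=G^R-v$ and $\eta(H)=\eta(H^R)$ for every digraph $H$, so (ii) applied to $G^R$ yields (iii). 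Finally, the first assertion of (iv) is immediate from Lemma~\ref{lem:atLeastOneForbidden}.

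\textbf{The bipartite refinement of (iv): setup.}
It remains to show that if $G$ is bipartite and $v$ is both dextro- and laevo-core-forbidden then $\eta(G-v)=\eta(G)+1$, and here I would exploit the block structure of the adjacency matrix of a bipartite digraph. Fix a bipartition $(V_1,V_2)$ witnessing bipartiteness and assume, without loss of generality, that $v\in V_1$. Ordering the vertices of $V_1$ before those of $V_2$, we may write
\[
A(G)=\begin{bmatrix} 0 & B \\ C & 0 \end{bmatrix},
\]
where $B$ (rows indexed by $V_1$, columns by $V_2$) records the arcs from $V_1$ to $V_2$ and $C$ (rows $V_2$, columns $V_1$) those from $V_2$ to $V_1$. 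Since row and column operations decouple across the two blocks, $\rank A(G)=\rank B+\rank C$, whence $\eta(G)=n-\rank B-\rank C$; likewise $\eta(G-v)=(n-1)-\rank B'-\rank C'$, where $B'$ is $B$ with the row $v$ deleted and $C'$ is $C$ with the column $v$ deleted (these being the blocks of $A(G-v)$).

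\textbf{The bipartite refinement of (iv): translating the hypotheses.}
From the block form, $\Ker G$ splits as the (right) null space of $C$, supported on $V_1$, together with the null space of $B$, supported on $V_2$; similarly $\CoKer G=\Ker A(G)^\intercal$ splits as the null space of $B^\intercal$ (the left null space of $B$) on $V_1$ together with the null space of $C^\intercal$ on $V_2$. Writing $e_v$ for the standard basis vector at $v$, it follows that, for $v\in V_1$: ``$v$ is dextro-core-forbidden'' means every vector of the null space of $C$ vanishes at $v$, i.e.\ $e_v$ lies in the row space of $C$; and ``$v$ is laevo-core-forbidden'' means every vector of the left null space of $B$ vanishes at $v$, i.e.\ $e_v$ lies in the column space of $B$. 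The key sub-claim is then: if $e_v$ lies in the column space of $B$ then $\rank B'=\rank B-1$, and symmetrically if $e_v$ lies in the row space of $C$ then $\rank C'=\rank C-1$. Both are proved by the same contradiction: if, say, $\rank B'=\rank B$, then the deleted row $v$ of $B$ is a linear combination of the remaining rows, producing a vector in the left null space of $B$ with $v$-entry $-1$, which is impossible since $e_v$ is orthogonal to that null space. Substituting, $\eta(G-v)=(n-1)-(\rank B-1)-(\rank C-1)=n+1-\rank B-\rank C=\eta(G)+1$, as required.

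\textbf{Anticipated obstacle.}
Everything outside the bipartite case is bookkeeping with the three preceding lemmas and the reversal symmetry. The one step needing genuine care is the bipartite computation: pointing the correspondence between the two core-forbidden conditions and the rank drops in the two off-diagonal blocks in the right direction, and verifying that both hypotheses are genuinely used (one controls $\rank C$, the other $\rank B$). I expect this to be the main obstacle, though it is not deep.
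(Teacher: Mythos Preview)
Your proof is correct and follows essentially the same approach as the paper's: parts (i)--(iii) and the first half of (iv) are assembled from Lemmas~\ref{lem:atLeastOneCore} and~\ref{lem:atLeastOneForbidden} plus reversal in exactly the way you describe, and the bipartite refinement is handled via the same block decomposition of the adjacency matrix, translating the two core-forbidden hypotheses into rank conditions on the two off-diagonal blocks so that the total rank changes by~$2$. The only cosmetic difference is that the paper singles out $v$ as the first row/column and argues that adjoining $v$'s row and column to $A(G-v)$ each increase the rank by one, whereas you keep $v$ inside the full blocks $B,C$ and argue that deleting $v$'s row from $B$ and $v$'s column from $C$ each decrease the rank by one; the linear algebra is identical.
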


\begin{proof}
Statement (i) follows directly from Lemma~\ref{lem:atLeastOneCore} and the first part of statement (iv) follows directly from Lemma~\ref{lem:atLeastOneForbidden}.
Statement (ii) is obtained by combining Lemmas \ref{lem:atLeastOneCore} and \ref{lem:atLeastOneForbidden}. Finally, statement (iii) follows by application of (ii) to the reverse digraph $G^R$. 

Now we prove the second part of statement (iv).
Without loss of generality, for $G$ bipartite the adjacency matrices of $G$ and $G - v$ can be written as
\begin{align}
A(G) & = \begin{bmatrix}
0 & \mathbf{0}^\intercal & \mathbf{u}^\intercal \\
\mathbf{0} & O_{p - 1} & B \\
\mathbf{w} & C & O_{q} \\
\end{bmatrix}, &
A(G - v) & = \begin{bmatrix}
 O_{p - 1} & B \\
 C & O_{q} \\
\end{bmatrix},
\end{align}
where $O_t$ denotes the all-zero $t \times t$ matrix. 
If there exists a column vector $\mathbf{a}$ such that $\mathbf{w} = C \mathbf{a}$, then $\begin{bsmallmatrix}-1 & \mathbf{a} & 0 \end{bsmallmatrix}^\intercal \in \Ker A(G)$,
contradicting the assumption that $v$ is a dextro-core-forbidden vertex. 
Therefore, $\mathbf{w} \notin \im C$.
Recall that $\im C$ denotes the image of $C$, i.e.\ $\im C = \{ C\x \mid \x \in \mathbb{R}^{p - 1} \}$.
Similarly, since $v$ is also laevo-core-forbidden, it is dextro-core-forbidden in $G^R$, implying that $\mathbf{u} \notin \im B^\intercal$.
In particular, this implies that $\mathbf{w}$ is not in the column-space of $C$ and $\mathbf{u}^\intercal$ is not in the row-space of $B$. 
But then $\rank A(G) = \rank A(G - v) + 2$. As in the proof of Lemma~\ref{thm:linAlgThm}, by the Rank-Nullity Theorem it then follows 
that $\eta(G - v) = \eta(G) + 1$.
\end{proof}

\noindent
Theorem~\ref{thm:niceThm} gives us a way of retrieving the middle/upper stratification of core-forbidden vertices. 

\begin{definition}
Let $G$ be a singular digraph.
A vertex $v \in V(G)$ that is both dextro-core-forbidden and laevo-core-forbidden is called \emph{upper} if $\eta(G - v) = \eta(G) + 1$,
 and \emph{middle} if $\eta(G - v) = \eta(G)$.
\end{definition}

For a bipartite (undirected) graph $G$  the parity of $\eta(G)$ is the same as the parity of $|V(G)|$ by the Pairing Theorem, and hence middle vertices are absent. 
However, for bipartite digraphs, as Theorem~\ref{thm:niceThm} shows, it is also possible that deletion of a vertex leaves the nullity unchanged.
 
\begin{example} 
The digraphs in Figure~\ref{fig:niceTheoremAll} show that all scenarios in Theorem~\ref{thm:niceThm} may occur. 

Consider the digraph $G_1$ in Figure~\ref{fig:niceTheoremAll}(a) with $\eta(G_1) = 2$. Vertices $1$ and $2$ belong to case (i), i.e.\ they are dextro-core and laevo-core, and thus
$\eta(G_1 - 1) = \eta(G_1 - 2) = 1$. Vertices $3$ and $4$ belong to cases (ii) and (iii), respectively, and thus $\eta(G_1 - 3) = \eta(G_1 - 4) = 2$. Vertices $5$ and $6$ belong to case (iv),
where vertex $5$ is upper and vertex $6$ is middle, i.e.\ $\eta(G_1 - 5) = 3$ and $\eta(G_1 - 6) = 2$.

The digraph $G_2$ in Figure~\ref{fig:niceTheoremAll}(b) is bipartite with $\eta(G_2) = 2$. Vertices $1$ and $2$ belong to case (i) and 
thus $\eta(G_2 - 1) = \eta(G_2 - 2) = 1$. Vertices $3$ and $4$ belong to cases (ii) and (iii), 
respectively, and thus $\eta(G_2 - 3) = \eta(G_2 - 4) = 2$. The vertex $5$ belongs to case (iv), but since $G_2$ is bipartite, it can only be upper, with $\eta(G_2 - 5) = 3$.

\begin{figure}[!htbp]
\centering
\subcaptionbox{$G_1$}{
\begin{tikzpicture}[scale=1.0]
\tikzstyle{vertex}=[draw,circle,font=\scriptsize,minimum size=5pt,inner sep=1pt,fill=magenta!80!white]
\tikzstyle{edge}=[draw,thick,decoration={markings,mark=at position 0.6 with {\arrow{Stealth}}},postaction=decorate]
\node[vertex,label=180:$1$] (v1) at (0, 1) {};
\node[vertex,label=180:$3$] (v4) at (-1, 0) {};
\node[vertex,label=0:$5$] (v5) at (1, 0) {};
\node[vertex,label=90:$2$] (v2) at (0, 0) {};
\node[vertex,label=180:$4$] (v0) at (0, -1) {};
\node[vertex,label=0:$6$] (v3) at (1.5, -1) {};
\path[edge] (v1) -- (v4);
\path[edge] (v5) -- (v1);
\path[edge] (v2) -- (v4);
\path[edge] (v0) -- (v4);
\path[edge] (v5) -- (v2);
\path[edge] (v0) -- (v5);
\path[edge] (v0) -- (v3);
\path[edge] (v3) -- (v5);
\end{tikzpicture}
}
\qquad
\subcaptionbox{$G_2$}{
\begin{tikzpicture}[scale=1.0]
\tikzstyle{vertex}=[draw,circle,font=\scriptsize,minimum size=5pt,inner sep=1pt,fill=magenta!80!white]
\tikzstyle{edge}=[draw,thick,decoration={markings,mark=at position 0.6 with {\arrow{Stealth}}},postaction=decorate]
\node[vertex,label=180:$1$] (v1) at (0, 1) {};
\node[vertex,label=180:$3$] (v4) at (-1, 0) {};
\node[vertex,label=0:$5$] (v5) at (1, 0) {};
\node[vertex,label=90:$2$] (v2) at (0, 0) {};
\node[vertex,label=180:$4$] (v0) at (0, -1) {};
\path[edge] (v1) -- (v4);
\path[edge] (v5) -- (v1);
\path[edge] (v2) -- (v4);
\path[edge] (v0) -- (v4);
\path[edge] (v5) -- (v2);
\path[edge] (v0) -- (v5);
\end{tikzpicture}
}
\caption{Examples of digraphs covering all scenarios in Theorem~\ref{thm:niceThm}.}
\label{fig:niceTheoremAll}
\end{figure}
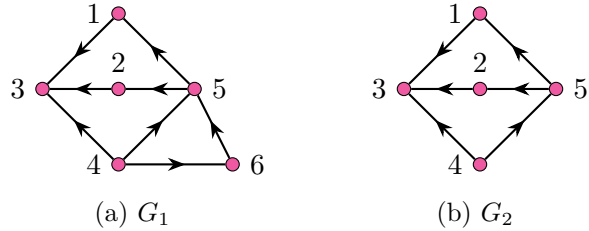
\end{example}

Since every vertex of a bi-nut digraph is dextro-core and laevo-core, Theorem~\ref{thm:niceThm} implies the following corollary, which
generalises \cite[Corollary 15]{ScirihaCoalescence} from nut graphs to bi-nut digraphs. 

\begin{corollary}
\label{lem:extendedSciriha}
Let $G$ be a bi-nut digraph and let $w \in V(G)$ be an arbitrary vertex. Then $G - w$ is a non-singular digraph.
\end{corollary}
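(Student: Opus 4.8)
The plan is to deduce this immediately from Theorem~\ref{thm:niceThm}(i), using the defining property of a bi-nut digraph that both its kernel and its co-kernel are spanned by full vectors. First I would record that a bi-nut digraph $G$ has $\eta(G) = 1$: by definition it is a dextro-nut digraph, so $\Ker G$ is one-dimensional, and hence $\eta(G) = \dim \Ker G = 1$. (In particular $G$ is singular, and since $K_1$ is excluded from the nut classes we may assume $|V(G)| \geq 2$, so that Theorem~\ref{thm:niceThm} applies.)

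Next I would observe that \emph{every} vertex of $G$ is simultaneously dextro-core and laevo-core. Indeed, let $\x$ be a full vector spanning $\Ker G$ (which exists because $G$ is dextro-nut) and let $\y$ be a full vector spanning $\CoKer G$ (which exists because $G$ is laevo-nut). For an arbitrary $w \in V(G)$ we have $\x(w) \neq 0$ with $\x \in \Ker G$, so $w$ is dextro-core, and $\y(w) \neq 0$ with $\y \in \CoKer G$, so $w$ is laevo-core.

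Finally, since $w$ is both dextro-core and laevo-core, Theorem~\ref{thm:niceThm}(i) gives $\eta(G - w) = \eta(G) - 1 = 1 - 1 = 0$, i.e.\ $G - w$ is non-singular, which is exactly the claim.

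There is essentially no serious obstacle here; the statement is a direct corollary. The only point that requires a moment's thought is the reduction in the second paragraph: one must notice that fullness of the spanning vectors of $\Ker G$ and $\CoKer G$ is precisely what makes the hypotheses of case (i) of Theorem~\ref{thm:niceThm} hold at every single vertex, after which the conclusion is automatic. I would also mention, for completeness, that this recovers \cite[Corollary~15]{ScirihaCoalescence} when $G$ is an undirected nut graph, since then $\Ker G = \CoKer G$ is spanned by a single full vector and the bi-nut hypothesis is satisfied.
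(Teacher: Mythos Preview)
Your proof is correct and follows exactly the approach the paper takes: the paper states the corollary as an immediate consequence of Theorem~\ref{thm:niceThm}(i), after observing that every vertex of a bi-nut digraph is both dextro-core and laevo-core. Your write-up simply makes the details explicit (that $\eta(G)=1$ and $|V(G)|\ge 2$), which is fine.
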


We will now make use of another concept from linear algebra.
An $n \times n$ matrix $A$ is \emph{reducible} (in the sense of \cite[Definition 6.2.21]{Horn2013}) if it is permutation-equivalent to a matrix 
\begin{equation}
\begin{bmatrix}
B & C \\
O & D
\end{bmatrix},
\end{equation}
where neither $B$ nor $D$ is an empty matrix (i.e. a matrix with no rows or no columns). A matrix that is not reducible is called \emph{irreducible}. A digraph $G$ is strongly 
connected if and only if $A(G)$ is irreducible (see, e.g.,~\cite[Theorem 6.2.24]{Horn2013} or \cite[Theorem 3.2.1]{BrualdiBook}). Now, we have all the tools required to prove the next theorem.

\begin{theorem}
\label{thm:stronglyConnectedBi}
Every bi-nut digraph is strongly connected.
\end{theorem}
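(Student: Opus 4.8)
The plan is to argue by contradiction: suppose a bi-nut digraph $G$ is not strongly connected, equivalently that $A(G)$ is reducible, and then construct a non-trivial element of $\Ker G$ or $\CoKer G$ supported on a proper subset of $V(G)$, contradicting the fact that a bi-nut digraph has a one-dimensional kernel (and co-kernel) spanned by a full vector. Concretely, since $A(G)$ is reducible, after a suitable permutation of the vertices we may write $V(G) = V_1 \cup V_2$ with $V_1,V_2 \neq \emptyset$ and no arcs from $V_2$ to $V_1$; in block form $A(G) = \left[\begin{smallmatrix} A_1 & C \\ O & A_2 \end{smallmatrix}\right]$, where $A_i = A(G[V_i])$. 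The point of taking $G$ bi-nut rather than merely dextro-nut is that we get to play both the kernel and the co-kernel condition, and exactly one of the two blocks $G[V_1]$, $G[V_2]$ will be the one we can exploit, depending on which of $A_1$, $A_2$ turns out to be singular.

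First I would record the elementary linear-algebra fact that in the block-triangular form above, $\eta(A(G)) \ge \max\{\eta(A_1),\eta(A_2)\}$ need not hold directly, but something cleaner does: a vector $\x = \left[\begin{smallmatrix} \nul \\ \x_2 \end{smallmatrix}\right]$ lies in $\Ker A(G)$ iff $A_2 \x_2 = \nul$ and $C\x_2 = \nul$; symmetrically, a vector $\y = \left[\begin{smallmatrix}\y_1 \\ \nul\end{smallmatrix}\right]$ lies in $\CoKer A(G) = \Ker A(G)^\intercal$ iff $\y_1^\intercal A_1 = \nul^\intercal$ and $\y_1^\intercal C = \nul^\intercal$. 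Now use that $G$ is singular: $0 = \eta(A(G))$… no — $\eta(A(G)) = 1$, and since $\det A(G) = \det A_1 \det A_2 = 0$, at least one of $A_1, A_2$ is singular. Split into the two cases. If $A_2$ is singular, pick $\nul \neq \x_2 \in \Ker A_2$; then $\x = \left[\begin{smallmatrix}\nul \\ \x_2\end{smallmatrix}\right]$ satisfies $A(G)\x = \left[\begin{smallmatrix} C\x_2 \\ \nul \end{smallmatrix}\right]$, which is not obviously zero — so this direction is not immediate, and that is precisely the obstacle.

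The key idea to get around this is to iterate the block decomposition down to a \emph{strongly connected component that is a source or a sink} in the condensation of $G$. Take a terminal strong component $S$ (one with no arcs leaving it); then $G[S]$ contributes a block $A_S$ sitting in the bottom-right with $C$-block structure such that the only arcs incident to $S$ from outside point \emph{into} $S$, i.e. $A(G)$ restricted to rows/columns reads $\left[\begin{smallmatrix}* & * \\ O & A_S\end{smallmatrix}\right]$ and moreover there are \emph{no} nonzero entries in the off-diagonal block in the $S$-\emph{columns} coming from below — wait; the cleaner statement: for a \emph{sink} component $S$, $A(G)$ has the form $\left[\begin{smallmatrix}A' & C' \\ O & A_S\end{smallmatrix}\right]$ where additionally, reading the co-kernel, $\y = \left[\begin{smallmatrix}\nul \\ \y_S\end{smallmatrix}\right]$ lies in $\CoKer G$ iff $\y_S^\intercal A_S = \nul^\intercal$ (the $C'$ block multiplies $\y$'s top part, which is zero). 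Symmetrically, for a \emph{source} component $S$, $\x = \left[\begin{smallmatrix}\x_S \\ \nul\end{smallmatrix}\right] \in \Ker G$ iff $A_S \x_S = \nul$. So: if $G$ is not strongly connected it has a source component and a sink component which are distinct (or equal only if $G$ itself is strongly connected). If the sink component $S$ is singular, $\CoKer G$ contains a vector vanishing on $V(G)\setminus S \neq \emptyset$, contradicting that the co-kernel is spanned by a full vector. If the source component is singular, likewise $\Ker G$ contains a non-full vector, contradiction.

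It remains to show that a source or sink component must be singular. Here is where I'd use $\det A(G) = 0$ again together with the condensation structure: order the strong components $S_1,\ldots,S_r$ topologically, so $A(G)$ is block upper-triangular with diagonal blocks $A(G[S_i])$, hence $0 = \det A(G) = \prod_i \det A(G[S_i])$, so \emph{some} $G[S_j]$ is singular. That $S_j$ need not be a source or sink, so the naive argument stalls; the fix is to apply the bi-nut hypothesis more carefully — take the singular block $A(G[S_j])$, get $\x_j \in \Ker A(G[S_j])$, extend by zero \emph{downward} (on components reachable from $S_j$) — no, that fails the $C$-conditions. The genuinely clean route, and the one I'd commit to: among all singular diagonal blocks choose $S_j$ \emph{minimal} in the topological order (no component before it, among singular ones) — still not enough. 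I think the correct and honest plan is: extend $\x_j\in\Ker A(G[S_j])$ to $\x$ on $\bigcup_{i\le j} S_i \cup S_j$ by solving the triangular system upward, which is always possible because the relevant diagonal blocks $A(G[S_i])$ for the components needed are... not necessarily invertible. So the real main obstacle is exactly this propagation step, and the resolution is to instead invoke Corollary~\ref{lem:extendedSciriha}/Theorem~\ref{thm:niceThm}: every vertex of a bi-nut digraph is dextro-core and laevo-core, so no proper nonempty vertex subset can carry a kernel/co-kernel vector that vanishes elsewhere unless that vector is zero — combined with the observation that the source-component restriction $\x_S$ above genuinely lies in $\Ker G$ (zero-extended) and is nonzero, forcing $S = V(G)$. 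So the argument collapses to: $A(G)$ reducible $\Rightarrow$ $G$ has a source strong component $S\subsetneq V(G)$ with $G[S]$ having a block with no outgoing arcs $\Rightarrow$ if $A(G[S])$ singular we win via $\Ker$, and $A(G[S])$ \emph{is} singular because if it were invertible we could delete an argument... The cleanest fix: if every source component and every sink component of $G$ were nonsingular, peel them off and induct on the number of strong components, each peeling preserving "bi-nut-ness modulo the off-diagonal blocks" — this is the step I expect to require the most care, and I would handle it by a direct rank computation showing $\det A(G) = \det A(G[\text{source}]) \cdot \det(\text{rest})$ so that singularity of $A(G)$ forces singularity of a source or a sink component after at most $r$ peelings. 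With that, the contradiction with the one-dimensional full kernel (resp. co-kernel) of a bi-nut digraph is immediate, completing the proof.
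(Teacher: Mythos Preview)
Your approach is genuinely different from the paper's, and it can be made to work, but as written it has a concrete gap at exactly the point you flag. You try to construct a non-full element of $\Ker G$ or $\CoKer G$ directly from the block-triangular decomposition along strong components; the paper instead uses Corollary~\ref{lem:extendedSciriha} (deletion of any vertex from a bi-nut digraph leaves a non-singular digraph): writing $A(G) = \left[\begin{smallmatrix} B & C \\ O & D \end{smallmatrix}\right]$, deleting the first vertex gives $\det B' \det D \neq 0$ so $\det D \neq 0$, and deleting the last gives $\det B \neq 0$, contradicting $\det B \det D = \det A(G) = 0$. That is a five-line argument once the corollary is in hand.

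The gap in your version is the propagation step, and your diagnosis of it is mistaken. You write that after picking a singular block $S_j$ and trying to extend $\x_j \in \Ker A(G[S_j])$ upward by solving the triangular system, ``the relevant diagonal blocks $A(G[S_i])$ for the components needed are\ldots not necessarily invertible''. But if you choose $j$ to be the \emph{first} index (in the topological order) with $A(G[S_j])$ singular, then by definition $A(G[S_i])$ is invertible for every $i<j$, and the upward propagation $x_i = -A(G[S_i])^{-1}\sum_{i<k\le j} C_{ik} x_k$ goes through without obstruction, yielding $\x \in \Ker G$ supported on $S_1\cup\cdots\cup S_j$ and vanishing on $S_{j+1},\ldots,S_r$. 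This is non-full provided $j<r$. If instead $j=r$, then the \emph{last} singular index $j'$ equals $r\ge 2$, and the dual argument (propagating a co-kernel vector downward from $S_{j'}$ through the non-singular blocks $S_{j'+1},\ldots,S_r$, here vacuously) gives a non-full element of $\CoKer G$ vanishing on $S_1,\ldots,S_{r-1}$. Either way you contradict bi-nut. Your ``peeling'' plan is really the same thing in disguise: peeling off a non-singular source and inducting amounts to constructing this propagated vector one block at a time; the inductive hypothesis is not that the peeled graph is bi-nut (it is not), but that the peeled graph is singular with $\ge 2$ strong components, and the lifting of its non-full kernel/cokernel vector back to $G$ uses precisely the invertibility of the peeled block. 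Once you see that the ``first singular block'' choice makes the propagation automatic, the argument is complete without any appeal to Corollary~\ref{lem:extendedSciriha} --- which is a mild advantage of your route over the paper's, at the cost of a slightly longer computation.
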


\begin{proof}
For contradiction, suppose that a bi-nut digraph $G$ is not strongly connected. Then its adjacency matrix is reducible and can be written, without loss of generality, as
\begin{equation}
A(G) = \begin{bmatrix}
B & C \\
O & D
\end{bmatrix},
\end{equation}
where $O$ is an all-zero matrix.
Since $G$ is a bi-nut digraph, it is singular and thus
\begin{equation}
\label{eq:detProductBD}
\det A(G) = (\det B) (\det D) = 0.
\end{equation}

Now, let $G'$ be obtained from $G$ by deleting the first vertex, i.e.\ the vertex that corresponds to the first row and first column in $A(G)$. The adjacency matrix of $G'$ is of the form
\begin{equation}
A(G') = \begin{bmatrix}
B' & C' \\ O' & D
\end{bmatrix},
\end{equation}
where $O'$ is an all-zero matrix.
By Corollary~\ref{lem:extendedSciriha}, $G'$ is a non-singular digraph, and thus $\det A(G') = (\det B') (\det D) \neq 0$. In particular, $\det D \neq 0$.
Let $G''$ be obtained from $G$ by deleting the last vertex. By similar reasoning we get that $\det B \neq 0$. But this contradicts \eqref{eq:detProductBD}.
\end{proof}

Note that every ambi-nut digraph is a bi-nut digraph, and hence ambi-nut digraphs are strongly connected.

\subsection{Bipartiteness}

Nut graphs are non-bipartite. The same is true in the universe of directed graphs, as we now show.
Note that a digraph $G$ is bipartite if and only if its underlying graph is bipartite.

\begin{lemma}
Every dextro-nut digraph, every laevo-nut digraph and every inter-nut digraph is non-bipartite.
\end{lemma}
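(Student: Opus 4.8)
The plan is to exploit the sign-flip trick across a bipartition. Suppose, for contradiction, that a dextro-nut digraph $G$ is bipartite, and fix a partition $V(G) = V_1 \cup V_2$ witnessing this, so that every arc of $G$ has exactly one end in each part. I would first record that $G$ has at least one arc: otherwise $A(G) = 0$ and $\Ker G = \RR^{V(G)}$, which is one-dimensional only when $|V(G)| = 1$, i.e.\ $G = K_1$, which is excluded by convention. Consequently both $V_1$ and $V_2$ are non-empty.

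Next, let $\x$ be a full vector spanning $\Ker G$ and define $\x' \in \RR^{V(G)}$ by $\x'(v) = \x(v)$ for $v \in V_1$ and $\x'(v) = -\x(v)$ for $v \in V_2$. The key step is to check that $\x' \in \Ker G$. For $v \in V_1$, every out-neighbour of $v$ lies in $V_2$, so $\Su_{\x'}^+(v) = -\Su_{\x}^+(v) = 0$; for $v \in V_2$, every out-neighbour of $v$ lies in $V_1$, so $\Su_{\x'}^+(v) = \Su_{\x}^+(v) = 0$. Hence $\x'$ satisfies \eqref{eq:zerosum}, i.e.\ $\x' \in \Ker G$. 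Since $\Ker G$ is one-dimensional, $\x' = \lambda \x$ for some scalar $\lambda$; comparing entries on the non-empty set $V_1$, where $\x$ is nowhere zero, forces $\lambda = 1$, and then comparing entries at any vertex of the non-empty set $V_2$ gives $-\x(v) = \x(v)$, so $\x(v) = 0$, contradicting that $\x$ is full.

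For the laevo-nut case I would simply apply the dextro-nut case to $G^R$, since $G$ is bipartite if and only if $G^R$ is, and $G$ is laevo-nut if and only if $G^R$ is dextro-nut. For the inter-nut case, take $\x$ a full vector spanning $\Ker G \cap \CoKer G$ and the same $\x'$: the computation above gives $\x' \in \Ker G$, and the mirror computation with in-neighbours (using $\Su_{\x'}^-$ and \eqref{eq:zerosumn}) gives $\x' \in \CoKer G$, so $\x' \in \Ker G \cap \CoKer G$; one-dimensionality of the intersection then yields the identical contradiction. I do not anticipate a real obstacle here; the only point needing a little care is the non-emptiness of $V_1$ and $V_2$, which is why the preliminary observation that a nut digraph has an arc is included.
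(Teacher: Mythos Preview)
Your proof is correct. Both you and the paper exploit the bipartition to manufacture a second kernel vector, but the specific constructions differ: the paper uses the \emph{zero-out} trick (keep $\x$ on one part, set it to $0$ on the other), whereas you use the \emph{sign-flip} trick (keep $\x$ on one part, negate it on the other). The paper's route is marginally shorter, since its auxiliary vector is visibly non-full and hence cannot lie in a one-dimensional kernel spanned by a full vector, with no need to solve for a scalar $\lambda$; your route has the minor advantage that the auxiliary vector remains full, so the contradiction is squeezed out purely from one-dimensionality. Your explicit treatment of the degenerate case (ruling out arc-free digraphs to ensure both parts are non-empty) is a point the paper leaves implicit.
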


\begin{proof}
 Suppose that $G$ is a bipartite digraph and that $U$ and $W$ are the parts of the bipartition of its underlying graph.
 If $\x \in \Ker G$, then the function $\x' \colon V(G) \to \RR$ coinciding with $\x$ on $U$ and mapping each vertex of $W$ to $0$
 is also in $\Ker G$, implying that $G$ is not a dextro-nut digraph. Since the reverse digraph of a bipartite digraph is also bipartite, 
 the same argument applied to the reverse digraph $G^R$ shows that it is not a dextro-nut digraph, and thus $G$ is not a laevo-digraph nut.
 
 For inter-nut digraphs, the above arguments also apply with minor adjustments. Namely, one has to take $\x \in \Ker G \cap \CoKer G$ and observe that then the function 
$\x' \in \Ker G \cap \CoKer G$, implying that $G$ is not an inter-nut digraph.
\end{proof}

Clearly, the above lemma applies to ambi-nut digraphs and bi-nut digraphs, as they are dextro-nut digraphs.

\subsection{Vertex degrees and leaflessness}

Nut graphs have no vertices of degree $1$. In the universe of directed graphs, the situation is more complex.
Leaflessness generalises as follows. 

\begin{lemma}
\label{lem:degreeBounds}
If $G$ is a dextro-nut digraph, then $\delta^-(G) \ge 1$ and $\delta^+(G) \not =1$.
Similarly, if $G$ is a laevo-nut digraph, then then $\delta^+(G) \ge 1$ and $\delta^-(G) \not =1$.
Consequently, if $G$ is a bi-nut digraph, then $\delta^-(G) \ge 2$ and $\delta^+(G) \ge 2$.
\end{lemma}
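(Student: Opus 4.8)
The plan is to reduce the statement to two short facts about a dextro-nut digraph $G$. The laevo-nut assertion will then follow by applying the dextro-nut assertion to the reverse digraph $G^R$: recall that $G$ is laevo-nut if and only if $G^R$ is dextro-nut, that $A(G^R)=A(G)^\intercal$, and that $\delta^-(G)=\delta^+(G^R)$ and $\delta^+(G)=\delta^-(G^R)$. The bi-nut assertion will then follow formally, since a bi-nut digraph is by definition both dextro- and laevo-nut, so that $\delta^-(G)\ge 1$ together with $\delta^-(G)\neq 1$ forces $\delta^-(G)\ge 2$, and likewise $\delta^+(G)\ge 1$ together with $\delta^+(G)\neq 1$ forces $\delta^+(G)\ge 2$. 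Throughout we may assume $|V(G)|\ge 2$ (the one-vertex digraph being excluded by the usual convention, exactly as for undirected nut graphs). So fix a dextro-nut digraph $G$ and a full vector $\x$ spanning the one-dimensional space $\Ker G$.

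For the claim $\delta^+(G)\neq 1$ I would argue by contradiction: if some vertex $v$ has out-degree exactly $1$, say $G^+(v)=\{w\}$, then the local kernel condition~\eqref{eq:zerosum} at $v$ reads $\Su_\x^+(v)=\x(w)=0$, contradicting the fullness of $\x$. Note that out-degree $0$ is genuinely permitted: at a sink the sum in~\eqref{eq:zerosum} is empty and hence automatically zero, which is precisely why the conclusion is the exact statement $\delta^+(G)\neq 1$ rather than $\delta^+(G)\ge 2$ — and indeed dextro-nut digraphs possessing sinks (even leaves) do occur, as the examples show.

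For the claim $\delta^-(G)\ge 1$: suppose some vertex $v$ has in-degree $0$. Then the column of $A(G)$ indexed by $v$ is identically zero (its $j$-th entry being $1$ exactly when $v_j\to_G v$), so $A(G)\chi_v=\nul$, i.e. the characteristic function $\chi_v$ of $v$ lies in $\Ker G$. But $\Ker G=\langle\x\rangle$ with $\x$ full, so every nonzero element of $\Ker G$ is full; since $\chi_v$ has a zero entry whenever $|V(G)|\ge 2$, this is a contradiction. Combining this with the reverse-digraph reduction yields $\delta^+(G)\ge 1$ and $\delta^-(G)\neq 1$ for laevo-nut digraphs, and then the bi-nut case is immediate as indicated above.

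I do not expect a genuine obstacle: the argument is just an unwinding of the local conditions~\eqref{eq:zerosum} and~\eqref{eq:zerosumn}. The only points that call for a little care are bookkeeping — keeping straight that~\eqref{eq:zerosum} and the out-degree go with $\Ker G$ while~\eqref{eq:zerosumn} and the in-degree go with $\CoKer G$, so that passing to $G^R$ transposes the two assertions in the right way — and the harmless single-vertex edge case, which is disposed of by the standing convention that excludes it.
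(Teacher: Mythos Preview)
Your proof is correct and follows essentially the same approach as the paper: both arguments derive $\delta^+(G)\neq 1$ directly from the local condition~\eqref{eq:zerosum} and fullness, and both handle $\delta^-(G)\ge 1$ by exhibiting a non-full kernel vector when some vertex has in-degree $0$. The only cosmetic difference is the choice of witness: the paper zeroes out the $v$-entry of $\x$ to obtain a second kernel vector, whereas you observe that $\chi_v$ itself lies in $\Ker G$; both exploit the same fact that a zero column of $A(G)$ leaves the $v$-coordinate unconstrained.
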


\begin{proof}
Let $G$ be a dextro-nut digraph with its kernel spanned by $\x$. If there exists a vertex $v\in V(G)$ with no in-neighbours,
then a function $\x' \colon V(G) \to \RR$ coinciding with $\x$ in each vertex except in $v$ while mapping $v$ to $0$ is
also in $\Ker G$, contradicting the fact that $\Ker G$ is one-dimensional. Hence, $\delta^-(G) \ge 1$.
Furthermore, if there is vertex $v\in V(G)$ with exactly one out-neighbour, say $w$, then $\sum_{u \in G^+(v)} \x(u) = \x(w) \not = 0$,
contradicting the assumption that $\x \in \Ker G$.
The claim about laevo-nut digraphs follows, by applying the  preceding reasoning to the reverse digraph $G^R$.
\end{proof}

In other words, Lemma~\ref{lem:degreeBounds} says that dextro-nut digraphs may not possess sources. However, dextro-nut digraphs can have sinks; see Figure~\ref{fig:smallGeneral}(a).
Similarly, by Lemma~\ref{lem:degreeBounds}, laevo-nut digraphs may not possess sinks, but can have sources. Note that the underlying graph of a dextro- or laevo-nut digraph may have leaves; see, e.g., Figure~\ref{fig:smallGeneral}(b). Also note that a given inter-nut digraph $G$ may have both sinks and sources, as seen from examples in Figure~\ref{fig:examplesInternuts}.

Lemma~\ref{lem:degreeBounds} implies that if $G$ is a bi-nut digraph then the minimum degree of its underlying graph is at least $4$.
Table~\ref{tbl:enumQuartic} gives us some information about the simplest case, where the underlying graph is quartic.
From Table~\ref{tbl:enumQuartic} it appears that bi-nut digraphs (and therefore ambi-nut digraphs) do not exist if the underlying graph is quartic
and of odd order. The next proposition generalises this observation.

\begin{proposition}
Let $G$ be a bi-nut digraph, such that its underlying graph is $4$-regular. Then $G$ is of even order $2n$. Moreover,  $\Ker G$ is spanned
by a vector $\x$ that consists of entries from $\{+1, -1\}$, $n$ of which are $+1$, and $\CoKer G$ is spanned by a vector $\y$ that consists of entries from $\{+1, -1\}$, $n$ of which are $+1$.
\end{proposition}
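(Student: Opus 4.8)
The plan is to exploit the local conditions \eqref{eq:zerosum} and \eqref{eq:zerosumn} together with $4$-regularity to pin down the kernel and co-kernel vectors up to scaling. Let $G$ be a bi-nut digraph whose underlying graph $\Gamma$ is $4$-regular, and let $\x$ span $\Ker G$. By Lemma~\ref{lem:degreeBounds} we have $\delta^-(G)\ge 2$ and $\delta^+(G)\ge 2$; since $d^-(v)+d^+(v)=4$ for every vertex $v$, this forces $d^-(v)=d^+(v)=2$ for all $v\in V(G)$. So every vertex has exactly two out-neighbours, say $v\to a$ and $v\to b$, and the local condition at $v$ reads $\x(a)+\x(b)=0$, i.e. $\x(b)=-\x(a)$.

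First I would show all entries of $\x$ have equal absolute value. Since $G$ is strongly connected (Theorem~\ref{thm:stronglyConnectedBi}), fix any vertex $w$ and let $c=|\x(w)|>0$ (nonzero since $\x$ is full). I claim $|\x(u)|=c$ for every $u$. The relation $v\to a,\,v\to b$ with $\x(a)=-\x(b)$ shows that the two out-neighbours of any vertex have opposite entries, hence the same absolute value; thus the map $u\mapsto|\x(u)|$ is constant along the equivalence classes generated by ``sharing a common in-neighbour''. Because the underlying graph is connected and each vertex has out-degree $2$, one checks these classes exhaust $V(G)$: starting from $w$, its two out-neighbours have absolute value $c$, their out-neighbours likewise, and strong connectivity propagates this to all of $V(G)$. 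Rescaling $\x$ so that $c=1$, we get $\x(u)\in\{+1,-1\}$ for all $u$. Apply the identical argument to $G^R$ (whose co-kernel vector $\y$ satisfies the analogous in-degree-$2$ local conditions) to obtain $\y(u)\in\{+1,-1\}$ after rescaling.

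Next I would establish that $G$ has even order and that exactly half the entries of $\x$ are $+1$. Partition $V(G)=P\cup N$ where $P=\{u:\x(u)=+1\}$, $N=\{u:\x(u)=-1\}$. Every vertex $v$ has one out-neighbour in $P$ and one in $N$ (since its two out-neighbours carry $+1$ and $-1$). Hence the number of arcs from $V(G)$ into $P$ is exactly $|V(G)|$, and likewise the number of arcs into $N$ is $|V(G)|$. On the other hand, the number of arcs into $P$ equals $\sum_{u\in P}d^-(u)=2|P|$, and similarly the number of arcs into $N$ equals $2|N|$. Therefore $2|P|=2|N|=|V(G)|$, so $|V(G)|$ is even, say $|V(G)|=2n$, and $|P|=|N|=n$. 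The same double-counting applied to $G^R$ (using that every vertex has one in-neighbour with $\y$-value $+1$ and one with $\y$-value $-1$, and $d^+(u)=2$) yields $n$ entries of $\y$ equal to $+1$.

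The main obstacle I anticipate is the propagation step in the second paragraph: carefully justifying that the ``equal absolute value'' property spreads to all vertices via strong connectivity, rather than merely within a weakly connected piece — one must phrase it as: the set $\{u:|\x(u)|=c\}$ is nonempty, and if $v$ lies in it then both out-neighbours of $v$ lie in it (by the local relation), so by strong connectivity every vertex reachable from $w$ — that is, every vertex — lies in it. Once that is in hand the rest is the clean double-counting above. A minor point to get right is that the rescalings of $\x$ and of $\y$ are independent (the definition of bi-nut does not require $\x=\y$), so the two ``$n$ of which are $+1$'' conclusions are proved separately, each by its own counting argument on $G$ and on $G^R$ respectively.
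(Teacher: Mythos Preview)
Your propagation argument has a genuine gap. You claim that ``if $v$ lies in $\{u:|\x(u)|=c\}$ then both out-neighbours of $v$ lie in it (by the local relation)'', but the local relation at $v$ says only that $\x(a)=-\x(b)$ for the two out-neighbours $a,b$ of $v$; it tells you $|\x(a)|=|\x(b)|$, not that this common value equals $|\x(v)|$. The kernel condition at $v$ does not involve $\x(v)$ at all. What does propagate correctly is the \emph{co-out-neighbour} relation: if $w\to v$ and $w\to v'$ then $|\x(v)|=|\x(v')|$. But strong connectivity does not force the equivalence classes of this relation to be all of $V(G)$. For instance, take $V=\{1,\dots,6\}$ with arcs $1,2\to 3,4$; $3,4\to 5,6$; $5,6\to 1,2$ (each of the listed tails goes to each listed head). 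This digraph is strongly connected with $d^+=d^-=2$ everywhere, yet the co-out-neighbour classes are $\{1,2\},\{3,4\},\{5,6\}$, and indeed $\Ker G$ is $3$-dimensional here. So strong connectivity alone cannot close the argument; you must invoke the one-dimensionality of $\Ker G$.

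The fix, which is essentially what the paper does, is to use the nut property directly rather than connectivity. Let $S$ be one co-out-neighbour equivalence class and define $\x'$ to agree with $\x$ on $S$ and vanish elsewhere. Since every vertex has both out-neighbours in the same class, $\x'$ still satisfies all the local conditions, so $\x'\in\Ker G$. As $\Ker G$ is one-dimensional and spanned by a full vector, $\x'$ must be a nonzero multiple of $\x$, forcing $S=V(G)$. The paper phrases this as a labelling algorithm that propagates $\pm a$ labels and leaves some vertices unlabelled; the resulting $\{0,\pm1\}$-vector lies in $\Ker G$, and the dextro-nut hypothesis then rules out any zero entries. Your double-counting for the even order and the balanced $\pm1$ split is correct and matches the paper's final step.
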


\begin{proof}
By Lemma~\ref{lem:degreeBounds}, $d^+(v) = d^-(v) = 2$ for every $v \in V(G)$.
Let $\ell\colon V(G) \to \{+a, -a, \star\}$ denote a mapping that assigns to each $v$ vertex of $G$ value $+a$ or value $-a$ or leaves 
 $\ell(v)$ undefined, which is indicated by the $\star$ symbol.
 We describe an algorithm which finds a non-trivial vector from $\ker G$.

Initially, set $\ell(u) = \star$ for every $u \in V(G)$. Choose an arbitrary vertex $v_0 \in V(G)$ and set $\ell(v_0) = +a$. While there still exists a vertex $v \in V(G)$ with out-neighbours 
$v'$ and $v''$, such that $\ell(v') \neq \star$ and $\ell(v'') = \star$, update $\ell(v'')$ by setting $\ell(v'') = -\ell(v')$. When this algorithm stops, for each vertex $v \in V(G)$ with out-neighbours
$v'$ and $v''$ either (i) $\ell(v') = \ell(v'') = \star$ or (ii) $\ell(v') \neq \star$ and $\ell(v'') \neq \star$ holds.

If there exists a vertex $v \in V(G)$ with out-neighbours 
$v'$ and $v''$ such that $\ell(v') = \ell(v'') = +a$ or $\ell(v') = \ell(v'') = -a$, then it follows that $a = 0$ and thus $\ker G$ is trivial or contains a non-full vector. This contradicts the assumption
that $G$ is a bi-nut digraph. Therefore, if (ii) holds it must be that $\ell(v') \neq \ell(v'')$.

Let us now define the vector $\x\colon V(G) \to \mathbb{R}$ by
\begin{equation}
\x(v) = \begin{cases}
\phantom{-}0 & \text{if } \ell(v) = \star, \\
+1 & \text{if } \ell(v) = +a, \\
-1 & \text{if } \ell(v) = -a.
\end{cases}
\end{equation}
The vector $\x$ satisfies \eqref{eq:zerosum}, so $\x \in \ker G$. The existence of a vertex $v \in V(G)$ such that $\x(v) = 0$ would contradict the asuumption that $G$ is a bi-nut digraph. It follows that
$\x(v) \in \{ +1, -1\}$ for every $v \in V(G)$ and vector $\x$ spans $\ker G$.
The fact that half of the vertices carry entry  $+1$ and the other half $-1$ is straightforward to prove by standard combinatorial  `double counting'.

The same approach is used to prove the existence of the vector $\y$ from the proposition. The only difference is that we need to consider in-neighbours instead of out-neighbours.
\end{proof}

Note that the algorithm in the above proof emulates the standard pencil-and-paper approach~\cite{PencilPaper,Zivkovic1972} to determining 
the null space of a chemical graph.

\section{Constructions}
\label{sec:constructions}

In the realm of undirected graphs,
several constructions have been described for producing larger nut graphs from smaller.
Examples include the \emph{bridge construction} (insertion of two vertices on a bridge) \cite{SciGut1998}, the \emph{subdivision construction} (insertion of four vertices
on an edge) \cite{SciGut1998}, and the \emph{coalescence construction} (identification of a vertex in one nut graph with a vertex of another) \cite{ScirihaCoalescence}.
Multiplier constructions, so called because they produce a nut graph  from a $(2t)$-regular parent  graph,
in which the order of the new graph is a fixed multiple of the order of the parent, were also introduced previously \cite{NutOrbitPaper}.
Analogues can be defined for nut digraphs.

\subsection{Subdivision construction}

One possible construction that in some sense mimics subdivision is illustrated in Figure~\ref{fig:subdivisionGeneralisations}.

\begin{theorem}
\label{thm:subdivInter}
Let $G$ be a digraph and let $u \to v$ be an arc in $G$. Let $\widetilde{G}$ be the digraph obtained from $G$ by removing the arc $u \to v$, and adding four new vertices
$u', u'', v', v''$ and five new arcs $u \to v'', u' \to v'', u' \to v', u'' \to v', u'' \to v$. 
If $G$ is an inter-nut digraph, then $\widetilde{G}$ is an inter-nut digraph that is not an ambi-nut digraph.
\end{theorem}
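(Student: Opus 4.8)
The plan is to produce a full vector spanning $\Ker\widetilde{G}\cap\CoKer\widetilde{G}$ by extending a spanning vector of $\Ker G\cap\CoKer G$ across the four new vertices, and then to verify that this extension is essentially unique. Since $G$ is an inter-nut digraph, fix a full vector $\x\in\Ker G\cap\CoKer G$ and define $\widetilde{\x}\colon V(\widetilde{G})\to\RR$ by $\widetilde{\x}(w)=\x(w)$ for $w\in V(G)$ and
\[
\widetilde{\x}(u')=-\x(u),\qquad \widetilde{\x}(u'')=\x(u),\qquad \widetilde{\x}(v')=-\x(v),\qquad \widetilde{\x}(v'')=\x(v).
\]
Because $\x$ is full and in particular $\x(u),\x(v)\neq 0$, the vector $\widetilde{\x}$ is full. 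To see $\widetilde{\x}\in\Ker\widetilde{G}$, check \eqref{eq:zerosum} vertex by vertex: at every $w\in V(G)\setminus\{u\}$ the out-neighbourhood is unchanged, so $\Su_{\widetilde{\x}}^+(w)=\Su_{\x}^+(w)=0$; at $u$, where $\widetilde{G}^+(u)=(G^+(u)\setminus\{v\})\cup\{v''\}$, one gets $\Su_{\widetilde{\x}}^+(u)=\Su_{\x}^+(u)-\x(v)+\widetilde{\x}(v'')=0$; at $u'$ the sum is $\widetilde{\x}(v'')+\widetilde{\x}(v')=\x(v)-\x(v)=0$; at $u''$ it is $\widetilde{\x}(v')+\x(v)=0$; and $v',v''$ are sinks. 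The verification that $\widetilde{\x}\in\CoKer\widetilde{G}$ is the mirror image, using $\x\in\CoKer G$ and $\widetilde{G}^-(v)=(G^-(v)\setminus\{u\})\cup\{u''\}$, with the non-trivial balances at $v$, $v'$ and $v''$. Hence $\widetilde{\x}\in\Ker\widetilde{G}\cap\CoKer\widetilde{G}$ is a full vector.

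For the reverse inclusion, take any $\widetilde{\y}\in\Ker\widetilde{G}\cap\CoKer\widetilde{G}$ and let $\y$ denote its restriction to $V(G)$. Applying \eqref{eq:zerosum} at $u''$ forces $\widetilde{\y}(v')=-\y(v)$, and then at $u'$ forces $\widetilde{\y}(v'')=-\widetilde{\y}(v')=\y(v)$; dually, applying \eqref{eq:zerosumn} at $v''$ and then at $v'$ forces $\widetilde{\y}(u')=-\y(u)$ and $\widetilde{\y}(u'')=\y(u)$. Consequently \eqref{eq:zerosum} at $u$ reads $\Su_{\y}^+(u)-\y(v)+\widetilde{\y}(v'')=0$, i.e.\ $\Su_{\y}^+(u)=0$, while at every other vertex $w$ of $V(G)$ it already gives $\Su_{\y}^+(w)=0$, so $\y\in\Ker G$. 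Symmetrically, \eqref{eq:zerosumn} at $v$ together with $\widetilde{\y}(u'')=\y(u)$ yields $\Su_{\y}^-(v)=0$, and at the other vertices of $V(G)$ it is automatic, so $\y\in\CoKer G$. Since $G$ is an inter-nut digraph, $\y=c\x$ for some $c\in\RR$, and the displayed formulas then force $\widetilde{\y}=c\widetilde{\x}$. Thus $\Ker\widetilde{G}\cap\CoKer\widetilde{G}$ is one-dimensional and spanned by the full vector $\widetilde{\x}$, so $\widetilde{G}$ is an inter-nut digraph.

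It remains to note that $\widetilde{G}$ is not an ambi-nut digraph. The new vertex $u'$ has in-degree $0$ in $\widetilde{G}$, so $\delta^-(\widetilde{G})=0$; by Lemma~\ref{lem:degreeBounds} a dextro-nut digraph has $\delta^-\geq 1$, hence $\widetilde{G}$ is not a dextro-nut digraph and therefore neither a bi-nut nor an ambi-nut digraph. (Equivalently, $v'$ is a sink, so $\widetilde{G}$ is not a laevo-nut digraph either.)

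The bulk of the work is the vertex-by-vertex checking of \eqref{eq:zerosum} and \eqref{eq:zerosumn}, which is routine; the only place any thought is needed is the bookkeeping at the two interface vertices $u$ and $v$, where the contribution of the deleted arc $u\to v$ must be absorbed using membership in $\Ker G$ (respectively $\CoKer G$) and then re-supplied along the directed path $u\to v''$, $u'\to v''$, $u'\to v'$, $u''\to v'$, $u''\to v$. I expect the main obstacle in writing the argument cleanly to be making sure that the restriction of an element of $\Ker\widetilde{G}\cap\CoKer\widetilde{G}$ to $V(G)$ genuinely lands in $\Ker G\cap\CoKer G$ — that is, that both local conditions on $G$ are recovered simultaneously, rather than only one of the two kernels.
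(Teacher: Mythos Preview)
Your proof is correct and follows essentially the same approach as the paper: extend the full vector $\x$ to $\widetilde{\x}$ on the four new vertices, verify membership in $\Ker\widetilde{G}\cap\CoKer\widetilde{G}$, and show uniqueness by forcing the values on $u',u'',v',v''$ and restricting back to $G$. For the ``not ambi-nut'' part you invoke Lemma~\ref{lem:degreeBounds} (the source $u'$ prevents $\widetilde{G}$ from being a dextro-nut), whereas the paper instead exhibits the explicit non-full kernel vector supported on $\{u',u''\}$; these are the same observation packaged differently, since that lemma's proof is precisely the construction of such a vector.
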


\begin{figure}[!htbp]
\centering
\subcaptionbox{$G$}{
\begin{tikzpicture}[scale=1.5]
\clip (-2.3, 1.4) rectangle (0.3, -0.2);
\tikzstyle{vertex}=[draw,circle,font=\scriptsize,minimum size=5pt,inner sep=1pt,fill=white]
\tikzstyle{edge}=[draw,thick,decoration={markings,mark=at position 0.55 with {\arrow{Stealth}}},postaction=decorate]
\draw[thick,fill=cyan!30!white] (0, 1) .. controls ($ (0, 1) + (-50:2) $) and ($ (-2, 1) + (-130:2) $) .. (-2, 1) .. controls ($ (-2, 1) + (-50:0.5) $) and ($ (0, 1) + (-130:0.5) $) .. (0, 1);
\node[vertex,label=90:$v$] (v2) at (0, 1) {};
\node[vertex,label=90:$u$] (v3) at (-2, 1) {};
\path[edge] (v3) -- (v2) {};
\end{tikzpicture}
}
\qquad
\subcaptionbox{$\widetilde{G}$}{
\begin{tikzpicture}[scale=1.5]
\clip (-2.3, 1.4) rectangle (0.3, -0.2);
\tikzstyle{vertex}=[draw,circle,font=\scriptsize,minimum size=5pt,inner sep=1pt,fill=white]
\tikzstyle{edge}=[draw,thick,decoration={markings,mark=at position 0.8 with {\arrow{Stealth}}},postaction=decorate]
\draw[thick,fill=cyan!30!white] (0, 1) .. controls ($ (0, 1) + (-50:2) $) and ($ (-2, 1) + (-130:2) $) .. (-2, 1) .. controls ($ (-2, 1) + (-50:0.5) $) and ($ (0, 1) + (-130:0.5) $) .. (0, 1);
\node[vertex,label=90:$v$] (v2) at (0, 1) {};
\node[vertex,label=90:$u$] (v3) at (-2, 1) {};
\node[vertex,label=90:$v''$,fill=gray] (a1) at ({-2 + 0.4}, 1) {};
\node[vertex,label=90:$u'$,fill=gray] (a2) at ({-2 + 2*0.4}, 1) {};
\node[vertex,label=90:$v'$,fill=gray] (a3) at ({-2*0.4}, 1) {};
\node[vertex,label=90:$u''$,fill=gray] (a4) at ({-0.4}, 1) {};
\path[edge] (v3) -- (a1) {};
\path[edge] (a2) -- (a1) {};
\path[edge] (a2) -- (a3) {};
\path[edge] (a4) -- (a3) {};
\path[edge] (a4) -- (v2) {};
\end{tikzpicture}
}
\caption{A possible subdivision construction for digraphs.}
\label{fig:subdivisionGeneralisations}
\end{figure}
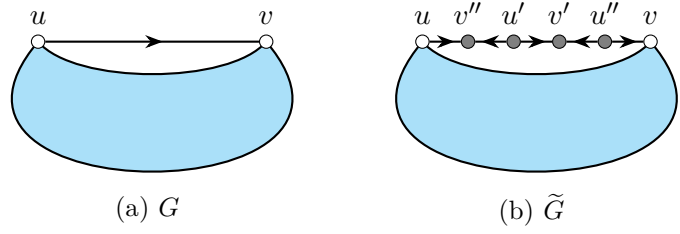

\begin{proof}
If $G$ is an inter-nut digraph, then there exists a full vector $\x \in \Ker G \cap \CoKer G$. The vector $\x$
can be extended to $\bx \in \mathbb{R}^{V(\widetilde{G})}$ by defining $\bx(w) = \x(w)$ for $w \in V(G)$ and $\bx(v'') = -\bx(v') = \x(v)$ and $\bx(u'') = -\bx(u') = \x(u)$. Clearly, $\bx$ is full
and $\bx \in \Ker\widetilde{G} \cap \CoKer \widetilde{G}$. This implies that $\widetilde{G}$ is an inter-core digraph. 

Take any non-trivial vector $\bq \in \Ker\widetilde{G} \cap \CoKer \widetilde{G}$. Equations \eqref{eq:zerosum} and \eqref{eq:zerosumn} imply that 
$\bq(v) = -\bq(v') = \bq(v'')$ and $\bq(u) = -\bq(u') = \bq(u'')$.
Let $\q$ be the restriction of $\bq$ to $G$. More precisely, $\q(w) = \bq(w)$ for $w \in V(G)$. It is easy to see that $\q \in \Ker G \cap \CoKer G$. As $G$ is an inter-nut digraph,
$\q = \lambda \x$ for some $\lambda \neq 0$. But then $\bq = \lambda \bx$. This means that $\bx$ spans $\Ker\widetilde{G} \cap \CoKer \widetilde{G}$ and $\widetilde{G}$ is thus an inter-nut digraph.

Moreover, let $\by, \bz \in \mathbb{R}^{V(\widetilde{G})}$ be vectors
defined as
\begin{equation}
\by(w) = \begin{cases}
 1 & w = u' \text{ or } w = u'', \\
0 & \text{otherwise};
\end{cases}
\quad\text{and}\quad
\bz(w) = \begin{cases}
 1 & w = v' \text{ or } w = v'', \\
0 & \text{otherwise}.
\end{cases}
\end{equation}
It is clear that $\by \in \Ker \widetilde{G}$ and $\by \notin \CoKer \widetilde{G}$, while $\bz \in \CoKer \widetilde{G}$ and $\bz \notin \Ker \widetilde{G}$.
Hence, $\widetilde{G}$ is not an ambi-nut digraph.
\end{proof}

Note that even if $G$ in Theorem~\ref{thm:subdivInter} is an ambi-nut digraph, $\widetilde{G}$ is not an ambi-nut digraph. The described
construction can only be used to produce larger inter-nut digraphs from existing inter-nut digraphs.

As we prove next, inter-nut and bi-nut digraphs have no cut-arcs (i.e. analogues of bridges).

\begin{proposition}
No inter-nut digraph contains a cut-arc.
\end{proposition}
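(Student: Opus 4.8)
The plan is to argue by contradiction via Lemma~\ref{lem:ext}. Suppose $G$ is an inter-nut digraph that contains a cut-arc $a = (u,v)$, i.e.\ an arc whose removal increases the number of (weakly) connected components of the underlying graph. Since $G$ is inter-nut it is connected, so the underlying graph of $G - a$ has exactly two components $V_1$ and $V_2$, which partition $V(G)$; moreover $u \in V_1$ and $v \in V_2$, for otherwise $u$ and $v$ would remain connected in $G - a$. The only edge of the underlying graph of $G$ joining $V_1$ and $V_2$ is $\{u,v\}$, and if $v \to u$ were an arc then that edge would survive in $G - a$, contradicting that $a$ is a cut-arc; hence $a$ is the unique arc of $G$ with one end in $V_1$ and the other in $V_2$. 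Consequently, $G^+(z) \subseteq V_2$ for every $z \in V_2$, $G^+(z) \subseteq V_1$ for every $z \in V_1 \setminus \{u\}$, and $G^+(u) \cap V_2 = \{v\}$.

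Next I would fix a full vector $\x \in \Ker G \cap \CoKer G$ (which exists since $G$ is inter-nut) and define $\y \colon V(G) \to \RR$ by $\y(z) = \x(z)$ for $z \in V_2$ and $\y(z) = 0$ for $z \in V_1$. The structural facts above let me evaluate $\Su_\y^+$ at every vertex: for $z \in V_2$ all out-neighbours of $z$ lie in $V_2$, where $\y$ agrees with $\x$, so $\Su_\y^+(z) = \Su_\x^+(z) = 0$ as $\x \in \Ker G$; for $z \in V_1 \setminus \{u\}$ all out-neighbours lie in $V_1$, where $\y$ vanishes, so $\Su_\y^+(z) = 0$; and finally $\Su_\y^+(u) = \y(v) = \x(v)$, which is non-zero because $\x$ is full. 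Thus $\Su_\y^+(z) = 0$ for all $z \in V(G) \setminus \{u\}$, while $\Su_\y^+(u) \neq 0$.

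To finish, observe that $u$ is a laevo-core vertex: $\x \in \CoKer G$ is full, so $\x(u) \neq 0$. Lemma~\ref{lem:ext}, applied with $w = u$, then forces $\Su_\y^+(u) = 0$, contradicting $\Su_\y^+(u) = \x(v) \neq 0$. Therefore $G$ has no cut-arc.

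There is essentially no computation in this argument; the step that requires the most care is the first paragraph --- correctly reading off from the definition of a cut-arc that $a$ really is the \emph{only} arc across the cut (in particular that the reverse arc $v \to u$ is absent), since this is exactly what confines the failure of the kernel local condition for $\y$ to the single vertex $u$. One must also apply the right half of Lemma~\ref{lem:ext}: here we use out-sums $\Su_\y^+$ together with the laevo-core property of $u$; symmetrically, supporting $\y$ on $V_1$ instead and using in-sums $\Su_\y^-$ with the dextro-core property of $v$ (valid since $\x \in \Ker G$ is full) gives the same conclusion, and indeed the very same argument proves that bi-nut digraphs have no cut-arc, taking $\x$ to span $\Ker G$ and a full co-kernel vector to witness that $u$ is laevo-core.
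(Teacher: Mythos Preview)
Your proof is correct and follows the same contradiction-via-Lemma~\ref{lem:ext} strategy as the paper, with a small twist in execution. The paper restricts the full vector $\widehat{\x}$ to the component $G_2$ containing the head of the cut-arc, observes that this restriction lies in $\Ker G_2$ (so the head is dextro-core in $G_2$), and then applies the \emph{second} half of Lemma~\ref{lem:ext} inside $G_2$ to force the in-sum at the head to vanish there, which makes $\widehat{\x}$ vanish at the tail. You instead extend $\x|_{V_2}$ by zero to a global vector $\y$ on $V(G)$ and apply the \emph{first} half of Lemma~\ref{lem:ext} directly in $G$, using that the tail $u$ is laevo-core because $\x\in\CoKer G$ is full. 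The two arguments are dual and equally short; yours is marginally more streamlined since it never names the induced subgraph, and, as you observe, it also yields the bi-nut corollary directly rather than via strong connectedness as the paper does.
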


\begin{proof}
Suppose that there exists an inter-nut digraph $\widehat{G}$ with a cut-arc $u \to w$.
When the cut-arc is removed, we obtain two connected components, $G_1$ and $G_2$, such that
$u \in V(G_1)$ and $w \in V(G_2)$. Since $\widehat{G}$ is an inter-nut digraph, there exists a full vector
$\widehat{\x} \in \Ker \widehat{G} \cap \CoKer \widehat{G}$. Let $\x$ denote the restriction of $\widehat{\x}$
to $G_2$, i.e. $\x(v) = \widehat{\x}(v)$ for all $v \in V(G_2)$.
Observe that $G_2$ is a dextro-nut digraph and that $\x$ is full and $\x \in \Ker G_2$. Further observe that
$\Su_\x^-(v) = 0$ for all $v \in V(G_2) \setminus \{w\}$. By Lemma~\ref{lem:ext}, $\Su_\x^-(w) = 0$ and 
$\x \in \CoKer G_2$. Observe that $G^-(w) = G_2^-(w) \cup \{u\}$. It follows that $\widehat{\x}(u) = 0$.
But this contradicts the fact that $\widehat{\x}$ is full. Therefore $\widehat{G}$ cannot be an inter-nut digraph.
\end{proof}

The following is a direct consequence of Theorem~\ref{thm:stronglyConnectedBi}.

\begin{corollary}
No bi-nut digraph contains a cut-arc.
\end{corollary}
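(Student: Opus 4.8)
The plan is to derive this directly from Theorem~\ref{thm:stronglyConnectedBi}, which guarantees that a bi-nut digraph $G$ is strongly connected; in particular its underlying graph is connected. I would argue by contradiction: suppose $G$ has a cut-arc $u\to w$. Deleting this single arc disconnects the underlying graph, and since removing one edge from a connected graph leaves exactly two components, we obtain components $G_1$ and $G_2$ whose vertex sets partition $V(G)$, with $u\in V(G_1)$ and $w\in V(G_2)$.

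The key observation is that $u\to w$ is the \emph{only} arc of $G$ with one end in $V(G_1)$ and the other in $V(G_2)$: any further such arc would survive the deletion of $u\to w$ and would then connect $G_1$ to $G_2$ in the underlying graph, contradicting that these are distinct components. Hence $G$ contains no arc directed from a vertex of $V(G_2)$ to a vertex of $V(G_1)$. But then there is no directed walk from $w$ to $u$ in $G$, since any such walk would have to traverse some arc crossing from $V(G_2)$ into $V(G_1)$. This contradicts strong connectivity, and the corollary follows.

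I do not expect any genuine obstacle; the argument is elementary once Theorem~\ref{thm:stronglyConnectedBi} is available. The only step needing a word of care is the passage from ``$u\to w$ is a cut-arc'' to ``$V(G_1)$ and $V(G_2)$ partition $V(G)$ and $u\to w$ is the unique arc between them'', which rests on the already-recorded fact that the underlying graph of a bi-nut digraph is connected. (As an alternative one could mimic the proof of the analogous statement for inter-nut digraphs, using Lemma~\ref{lem:ext} together with the observation that every vertex of a bi-nut digraph is both dextro-core and laevo-core; but the route through strong connectivity is shorter and is precisely the ``direct consequence'' advertised.)
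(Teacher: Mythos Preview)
Your proposal is correct and is precisely the argument the paper has in mind: the paper states only that the corollary ``is a direct consequence of Theorem~\ref{thm:stronglyConnectedBi}'', and what you have written is the one-paragraph unpacking of that direct consequence.
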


Note that every ambi-nut digraph is a bi-nut digraph, and hence ambi-nut digraphs do not contain cut-arcs.
Dextro-nut digraphs and thus laevo-nut digraphs may, however, possess cut-arcs; see Figure~\ref{fig:dextrocoal}(a).

\subsection{Coalescence construction} 

Let $G$ and $H$ be two digraphs and $v$ a vertex of $G$ and $u$ a vertex of $H$. The \emph{coalescence} $(G, v) \odot (H, u)$ is defined as the digraph obtained from the disjoint union 
of $G$ and $H$  by identifying the vertices $v$ and $u$, where the `merged' vertex retains all the out-neighbours and all the in-neighbours of both $v$ and $u$.
Note that when the adjacency relations of $G$ and $H$ are both symmetric and irreflexive (i.e. when $G$ and $H$ are simple, undirected graphs), the coalescence as defined here
coincides with the coalescence of graphs defined in~\cite{ScirihaCoalescence}.

\begin{theorem}
\label{the:coal}
Let $G$ and $H$ be bi-nut digraphs, where $v^*\in V(G)$ and $u^*\in V(H)$. Then $(G, v^*) \odot (H, u^*)$ is a bi-nut digraph. Moreover, if both $G$ and $H$ are ambi-nut digraphs, then so is
$(G, v^*) \odot (H, u^*)$.
\end{theorem}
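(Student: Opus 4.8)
The plan is to work with kernel and co-kernel vectors directly via the local conditions \eqref{eq:zerosum} and \eqref{eq:zerosumn}, and to crucially exploit Corollary~\ref{lem:extendedSciriha}, which says that deleting any vertex of a bi-nut digraph leaves a non-singular digraph. Write $w^*$ for the merged vertex of $K := (G, v^*) \odot (H, u^*)$, and note that $K - w^*$ is the disjoint union of $G - v^*$ and $H - u^*$, both non-singular by Corollary~\ref{lem:extendedSciriha}; hence $K - w^*$ is non-singular. By Corollary~\ref{cor:nullityCorollary}, $\eta(K) \le \eta(K - w^*) + 1 = 1$.

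First I would show $\eta(K) \ge 1$ by exhibiting an explicit full kernel vector. Let $\x_G$ span $\Ker G$ and $\x_H$ span $\Ker H$; both are full since $G, H$ are dextro-nut. Rescale so that $\x_G(v^*) = \x_H(u^*) =: c \ne 0$, and define $\x$ on $V(K)$ by $\x(w) = \x_G(w)$ for $w \in V(G)$ and $\x(w) = \x_H(w)$ for $w \in V(H)$; these agree at $w^*$. For a vertex $w \ne w^*$ lying in $G$, its out-neighbours in $K$ are exactly its out-neighbours in $G$ (the merge only adds arcs at $w^*$), so $\Su_\x^+(w) = \Su_{\x_G}^+(w) = 0$; similarly for $w \in H$. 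At $w^*$, the out-neighbours are $G^+(v^*) \cup H^+(u^*)$, so $\Su_\x^+(w^*) = \Su_{\x_G}^+(v^*) + \Su_{\x_H}^+(u^*) = 0 + 0 = 0$. Thus $\x \in \Ker K$, it is full (being pieced from full vectors, consistent at $w^*$), and combined with $\eta(K) \le 1$ we get $\eta(K) = 1$ with $\Ker K = \langle \x \rangle$ spanned by a full vector, so $K$ is dextro-nut. Applying the identical argument to the reverse digraphs $G^R$ and $H^R$ (which are again bi-nut, and whose coalescence is $K^R$) shows $K$ is laevo-nut; hence $K$ is a bi-nut digraph.

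For the ambi-nut claim, suppose additionally $G$ and $H$ are ambi-nut, so $\Ker G = \CoKer G = \langle \x_G \rangle$ and $\Ker H = \CoKer H = \langle \x_H \rangle$. The vector $\x$ constructed above then also satisfies the in-conditions: for $w \ne w^*$ in $G$ we have $G^-_K(w) = G^-(w)$, so $\Su_\x^-(w) = \Su_{\x_G}^-(w) = 0$ because $\x_G \in \CoKer G$; likewise in $H$; and at $w^*$, $\Su_\x^-(w^*) = \Su_{\x_G}^-(v^*) + \Su_{\x_H}^-(u^*) = 0$. Hence $\x \in \CoKer K$ as well, and since $\Ker K$ and $\CoKer K$ are both one-dimensional (established above) and both contain the nonzero $\x$, they coincide and are spanned by the same full vector $\x$. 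Therefore $K$ is ambi-nut.

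The main obstacle is the upper bound $\eta(K) \le 1$: one must be careful that $K - w^*$ really is the disjoint union $(G - v^*) \sqcup (H - u^*)$ with no surviving arcs between the two parts — this holds because in the coalescence all new adjacencies are incident to the merged vertex — so that its nullity is $\eta(G - v^*) + \eta(H - u^*) = 0$ by Corollary~\ref{lem:extendedSciriha}, and then Corollary~\ref{cor:nullityCorollary} finishes it. The remaining steps are routine bookkeeping with the local conditions.
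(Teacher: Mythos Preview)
Your proof is correct, and the construction of the full kernel vector $\x$ is the same as the paper's. The genuine difference lies in how one-dimensionality of $\Ker K$ is established. The paper takes an arbitrary $\w\in\Ker K$, observes that its restriction $\w_G$ to $V(G)$ satisfies the out-local condition at every vertex of $G$ except possibly $v^*$, and then invokes Lemma~\ref{lem:ext} (using that $v^*$ is laevo-core in the bi-nut $G$) to force $\w_G\in\Ker G$; matching the two restrictions at the merged vertex then gives $\w\in\langle\z\rangle$. You instead bypass this direct analysis by noting that $K-w^*=(G-v^*)\sqcup(H-u^*)$ is non-singular via Corollary~\ref{lem:extendedSciriha}, and then Corollary~\ref{cor:nullityCorollary} immediately caps $\eta(K)\le 1$. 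Your route is shorter once those corollaries are in hand, though of course Corollary~\ref{lem:extendedSciriha} itself rests on Lemma~\ref{lem:ext} through Theorem~\ref{thm:niceThm}, so the underlying machinery is the same. The paper's argument has the minor advantage of making explicit that every kernel vector of $K$ restricts to kernel vectors of the pieces, which is a structural fact one might want elsewhere; your argument has the advantage of cleanly separating ``nullity at most one'' from ``there is a full kernel vector''.
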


\begin{proof}
Let $\x,\y,\bx$, and $\by$ be such that $\Ker G = \langle \x \rangle$, $\CoKer G = \langle \bx \rangle$, $\Ker H = \langle \y \rangle$, and $\CoKer H = \langle \by \rangle$.
Without loss of generality we may assume that $\x(v^*) = \bx(v^*) = \y(u^*) = \by(u^*)$. Further, let $V$ denote the vertex-set of $C \coloneqq (G, v^*) \odot (H, u^*)$. We will abuse the notation slightly
 and denote the vertex of $C$ obtained by identifying $u^*$ and $v^*$  by $u^*$ or $v^*$, or, when we want to avoid ambiguity, by $\overline{uv}$, whichever is most convenient.

Define the functions $\z, \bz \in \RR^V$  by letting them coincide with the functions $\x$ and $\bx$ on $V(G)\setminus \{v^*\}$ and with $\y$ and $\by$ on $V(H) \setminus \{ u^*\}$, respectively. 
Further, let $\z(\overline{uv}) = \x(v^*) = \y(u^*)$ and
$\bz(\overline{uv}) = \bx(v^*)=\by(u^*)$. Observe that $\z \in \Ker C$ and $\bz \in \CoKer C$, showing that both $\Ker C$ and $\CoKer C$ contain a full vector. We now need to show that
they are both one-dimensional.

Suppose that $\w \in \Ker C$. 
Since the out-neighbourhood in $C$ of each vertex in $V(G) \setminus \{v^*\}$ coincides with its out-neighbourhood in $G$, it follows that 
$$
 \sum_{u\in C^+(v)} \w(u)\> =\>  \sum_{u\in G^+(v)} \w(u)\> =\>  0\>\> \hbox{ for all }\> v\in V(G) \setminus \{v^*\}.
$$
Now, since $\y \in \CoKer G$ and $\y(v^*) \not = 0$,  Lemma~\ref{lem:ext} implies that the restriction $\w_G$ of $\w$ to $V(G)$ lies in $\Ker G$. Since $G$ is a dextro-nut digraph, it follows that
$ \w_G  = \lambda  \x$ for some $\lambda \in \RR$, $\lambda \not = 0$. By an analogous argument, one can show that the restriction $\w_H$ of $\w$ to $V(H)$ satisfies $\w_H = \mu \y$.
Since $\x(v^*) = \y(u^*) = \w(\overline{uv})$, it follows that $\lambda = \mu$ and thus $\w = \lambda \z$. This shows that $\Ker C$ is indeed one-dimensional and spanned by $\z$.
The proof that $\CoKer C$ is also one-dimensional can be obtained by applying the above to the reverse digraphs $G^R$ and $H^R$. This shows that $C$ is indeed a bi-nut digraph,
the cokernel of which is spanned by the function $\bz\colon V \to \RR$, coinciding with $\bx$ on $V(G)$ and with $\by$ on $V(H)$.

If $G$ and $H$ are both ambi-nut digraphs, then $\bx = \x$ and $\by = \y$, and thus $\bz =\z$, showing that $C$ is an ambi-nut digraph.
\end{proof}

Note that  the coalescence of two dextro-nut digraphs is not necessarily a dextro-nut digraph. 
Figure~\ref{fig:dextrocoal}(a) shows an example where coalescence of dextro-nut digraphs from Figure~\ref{fig:smallGeneral}(a) 
and~\ref{fig:smallGeneral}(b) produces a dextro-nut digraph (which contains a cut-arc but no leaves).
On the other hand, 
Figure~\ref{fig:dextrocoal}(b) shows an example where a coalescence of two copies of the dextro-nut 
digraph from Figure~\ref{fig:smallGeneral}(a) is not a dextro-nut digraph,
as confirmed by the existence of a non-full kernel eigenvector of the resulting digraph.

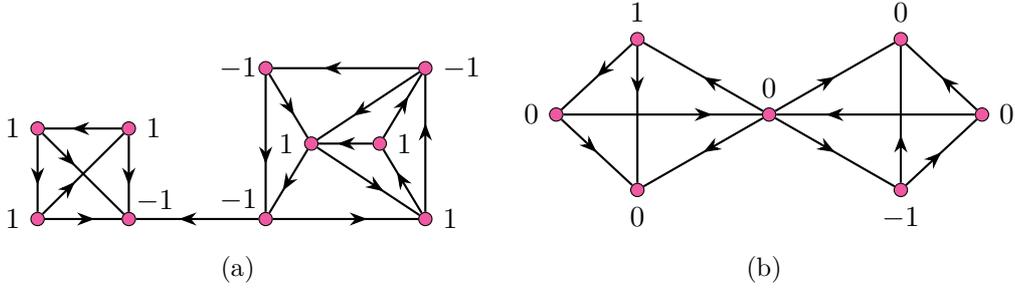
\begin{figure}[!htbp]
\centering
\subcaptionbox{}{
\begin{tikzpicture}[scale=1.0,rotate=90]
\tikzstyle{vertex}=[draw,circle,font=\scriptsize,minimum size=5pt,inner sep=1pt,fill=magenta!80!white]
\tikzstyle{edge}=[draw,thick,decoration={markings,mark=at position 0.5 with {\arrow{Stealth}}},postaction=decorate]
\tikzstyle{sedge}=[draw,thick,decoration={markings,mark=at position 0.4 with {\arrow{Stealth}}},postaction=decorate]
\tikzstyle{ledge}=[draw,thick,decoration={markings,mark=at position 0.65 with {\arrow{Stealth}}},postaction=decorate]
\node[vertex,label={[yshift=-3pt,xshift=2pt]120:$-1$}] (v5) at (-1, 0) {};
\node[vertex,label={[xshift=3.2pt]180:$-1$}] (v1) at (1, 0) {};
\node[vertex,label={[yshift=0pt]180:$1$}] (v6) at (0, -0.6) {};
\node[vertex,label={[yshift=0pt]0:$1$}] (v3) at (0, -1.5) {};
\node[vertex,label=0:$1$] (v0) at (-1, -2.1) {};
\node[vertex,label=0:$-1$] (v4) at (1, -2.1) {};
\node[vertex,label={[yshift=-3pt,xshift=-3pt]45:$-1$}] (v2) at (-1, 1.8) {};
\node[vertex,label=0:$1$] (v7) at ($ (v2) + (1.2, 0) $) {};
\node[vertex,label=180:$1$] (v8) at ($ (v2) + (1.2, 1.2) $) {};
\node[vertex,label=180:$1$] (v9) at ($ (v2) + (0, 1.2) $) {};
\path[ledge] (v3) -- (v4);
\path[ledge] (v3) -- (v6);
\path[ledge] (v0) -- (v3);
\path[ledge] (v0) -- (v4);
\path[ledge] (v4) -- (v1);
\path[ledge] (v4) -- (v6);
\path[ledge] (v1) -- (v5);
\path[ledge] (v1) -- (v6);
\path[ledge] (v6) -- (v0);
\path[ledge] (v6) -- (v5);
\path[ledge] (v5) -- (v0);
\path[ledge] (v5) -- (v2);
\path[ledge] (v7) -- (v2);
\path[sedge] (v8) -- (v2);
\path[ledge] (v9) -- (v2);
\path[ledge] (v7) -- (v8);
\path[ledge] (v8) -- (v9);
\path[sedge] (v9) -- (v7);
\end{tikzpicture}
}
\subcaptionbox{}{
\begin{tikzpicture}[scale=2]
\tikzstyle{vertex}=[draw,circle,font=\scriptsize,minimum size=5pt,inner sep=1pt,fill=magenta!80!white]
\tikzstyle{edge}=[draw,thick,decoration={markings,mark=at position 0.5 with {\arrow{Stealth}}},postaction=decorate]
\tikzstyle{sedge}=[draw,thick,decoration={markings,mark=at position 0.35 with {\arrow{Stealth}}},postaction=decorate]
\tikzstyle{ledge}=[draw,thick,decoration={markings,mark=at position 0.75 with {\arrow{Stealth}}},postaction=decorate]
\node[vertex,label=90:$0$] (v1) at (0, 0) {};
\node[vertex,label=90:$1$] (v2) at (150:1) {};
\node[vertex,label=180:$0$] (v3) at (180:1.4) {};
\node[vertex,label=-90:$0$] (v4) at (210:1) {};
\node[vertex,label=90:$0$] (u2) at (30:1) {};
\node[vertex,label=0:$0$] (u3) at (0:1.4) {};
\node[vertex,label=-90:$-1$] (u4) at (-30:1) {};
\path[edge] (v1) -- (v2);
\path[edge] (v1) -- (v4);
\path[ledge] (v3) -- (v1);
\path[edge] (v1) -- (u2);
\path[edge] (v1) -- (u4);
\path[ledge] (u3) -- (v1);
\path[edge] (v2) -- (v3);
\path[edge] (v3) -- (v4);
\path[sedge] (v2) -- (v4);
\path[edge] (u4) -- (u3);
\path[sedge] (u4) -- (u2);
\path[edge] (u3) -- (u2);
\end{tikzpicture}}
\caption{Coalescence of two dextro-nut digraphs may or may not result in a dextro-nut digraph. (a) Dextro-nut digraphs from Figure~\ref{fig:smallGeneral}(a) and~\ref{fig:smallGeneral}(b) yield a dextro-nut digraph; but (b) two copies of the dextro-nut digraph from Figure~\ref{fig:smallGeneral}(a) (at one of the vertices bearing $1$ in the displayed kernel eigenvector) do not.}
\label{fig:dextrocoal}
\end{figure}

\subsection{Cross-over construction}

Let $u \to v$ be an arc of a digraph $G$ and let $s \to t$ be an arc of a digraph $H$. The {\em cross-over} of $G$ and $H$ with respect to $u \to v$ and $s \to t$, denoted by $(G, u,v) \bowtie (H, s,t)$,
is defined as the digraph obtained from the disjoint union of $G$ and $H$ by deleting the arcs $u \to v$ and $s \to t$ and adding the arcs $u \to t$ and $s \to v$; see Figure~\ref{fig:schemcross}.

\begin{figure}[!htbp]
\centering
\subcaptionbox{\label{fig:dextrocross1}}{
\begin{tikzpicture}[scale=2]
\clip (-0.2, 0.3) rectangle (2.7, -1.3);
\tikzstyle{vertex}=[draw,circle,font=\scriptsize,minimum size=5pt,inner sep=1pt,fill=white]
\tikzstyle{edge}=[draw,thick,decoration={markings,mark=at position 0.5 with {\arrow{Stealth}}},postaction=decorate]
\tikzstyle{sedge}=[draw,thick,decoration={markings,mark=at position 0.35 with {\arrow{Stealth}}},postaction=decorate]
\tikzstyle{ledge}=[draw,thick,decoration={markings,mark=at position 0.75 with {\arrow{Stealth}}},postaction=decorate]
\draw[thick,fill=cyan!30!white] (1, 0) .. controls ($ (1, 0) + (140:2) $) and ($ (1, -1) + (-140:2) $) .. (1, -1) .. controls ($ (1, -1) + (140:0.5) $) and ($ (1, 0) + (-140:0.5) $) .. (1, 0);
\draw[thick,fill=cyan!30!white] (1.5, 0) .. controls ($ (1.5, 0) + (40:2) $) and ($ (1.5, -1) + (-40:2) $) .. (1.5, -1)  .. controls ($ (1.5, -1) + (40:0.5) $) and ($ (1.5, 0) + (-40:0.5) $) .. (1.5, 0);
\node[vertex,label=90:$v$] (v2) at (1, 0) {};
\node[vertex,label=-90:$u$] (v3) at (1, -1) {};
\node[vertex,label=90:$t$] (u2) at (1.5, 0) {};
\node[vertex,label=-90:$s$] (u3) at (1.5, -1) {};
\path[edge] (u3) -- (u2) {};
\path[edge] (v3) -- (v2) {};
\node[fill=none,draw=none] at (0.3, -0.5) {$G$};
\node[fill=none,draw=none] at (2.2, -0.5) {$H$};
\end{tikzpicture}}
\qquad
\subcaptionbox{\label{fig:dextrocross2}}{
\begin{tikzpicture}[scale=2]
\clip (-0.2, 0.3) rectangle (2.7, -1.3);
\tikzstyle{vertex}=[draw,circle,font=\scriptsize,minimum size=5pt,inner sep=1pt,fill=white]
\tikzstyle{edge}=[draw,thick,decoration={markings,mark=at position 0.5 with {\arrow{Stealth}}},postaction=decorate]
\tikzstyle{sedge}=[draw,thick,decoration={markings,mark=at position 0.35 with {\arrow{Stealth}}},postaction=decorate]
\tikzstyle{ledge}=[draw,thick,decoration={markings,mark=at position 0.75 with {\arrow{Stealth}}},postaction=decorate]
\draw[thick,fill=cyan!30!white] (1, 0) .. controls ($ (1, 0) + (140:2) $) and ($ (1, -1) + (-140:2) $) .. (1, -1) .. controls ($ (1, -1) + (140:0.5) $) and ($ (1, 0) + (-140:0.5) $) .. (1, 0);
\draw[thick,fill=cyan!30!white] (1.5, 0) .. controls ($ (1.5, 0) + (40:2) $) and ($ (1.5, -1) + (-40:2) $) .. (1.5, -1)  .. controls ($ (1.5, -1) + (40:0.5) $) and ($ (1.5, 0) + (-40:0.5) $) .. (1.5, 0);
\node[vertex,label=90:$v$] (v2) at (1, 0) {};
\node[vertex,label=-90:$u$] (v3) at (1, -1) {};
\node[vertex,label=90:$t$] (u2) at (1.5, 0) {};
\node[vertex,label=-90:$s$] (u3) at (1.5, -1) {};
\path[sedge] (u3) .. controls ($ (u3) + (0, 0.5) $) and ($ (v2) + (0, -0.5) $) .. (v2);
\path[sedge] (v3) .. controls ($ (v3) + (0, 0.5) $) and ($ (u2) + (0, -0.5) $) .. (u2);
\end{tikzpicture}}
\caption{Schematic of the cross-over construction. Panel (a) shows the disjoint union of digraph $G$ and $H$ that contain
arcs $u \to v$ and $s \to t$, respectively. Panel (b) shows the digraph $(G, u,v) \bowtie (H, s,t)$.}
\label{fig:schemcross}
\end{figure}
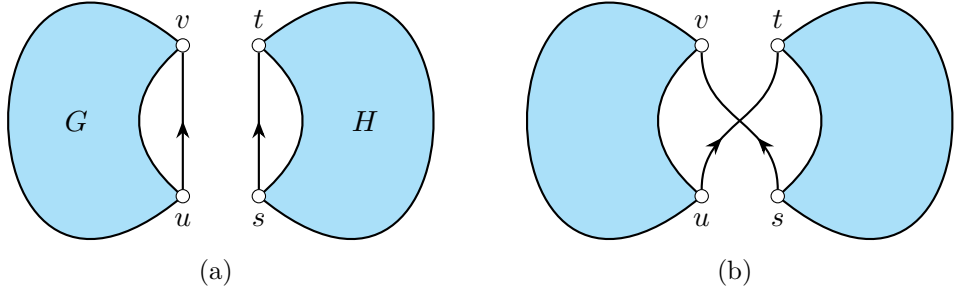

\begin{theorem}
Let $G$ and $H$ be two ambi-nut digraphs with 
$\Ker G = \langle \x \rangle=\CoKer G$ and 
$\Ker H = \langle \y \rangle=\CoKer H$.
If $u^* \to v^*$ is an arc of $G$ and $s^* \to t^*$ an arc of $H$ such that
\begin{equation}
\label{eq:cross}
\x(u^*) = \y(s^*),\quad
\x(v^*) = \y(t^*),
\end{equation}
then the digraph $(G, u^*,v^*) \bowtie (H, s^*,t^*)$ is an ambi-nut digraph.
\end{theorem}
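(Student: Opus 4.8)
The plan is to exhibit an explicit full vector $\z \in \Ker C \cap \CoKer C$ for $C := (G, u^*, v^*) \bowtie (H, s^*, t^*)$, and then to prove that $\dim \Ker C = 1$. Since $\dim \Ker C = \dim \CoKer C$ holds for every digraph, this will force $\Ker C = \CoKer C = \langle \z \rangle$, which is exactly the ambi-nut property.

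First I would do the bookkeeping on neighbourhoods. The vertex set of $C$ is the disjoint union $V(G) \sqcup V(H)$, and the only neighbourhoods that differ from those in $G$ or $H$ are $C^+(u^*) = (G^+(u^*) \setminus \{v^*\}) \cup \{t^*\}$ and $C^+(s^*) = (H^+(s^*) \setminus \{t^*\}) \cup \{v^*\}$ on the out side, and symmetrically $C^-(v^*) = (G^-(v^*) \setminus \{u^*\}) \cup \{s^*\}$ and $C^-(t^*) = (H^-(t^*) \setminus \{s^*\}) \cup \{u^*\}$ on the in side. Then I would let $\z$ coincide with $\x$ on $V(G)$ and with $\y$ on $V(H)$. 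For $w \notin \{u^*, s^*\}$ the equality $\Su_\z^+(w) = 0$ is immediate from $\x \in \Ker G$ and $\y \in \Ker H$; at $w = u^*$ one computes $\Su_\z^+(u^*) = \bigl(\sum_{p \in G^+(u^*)} \x(p)\bigr) - \x(v^*) + \y(t^*) = -\x(v^*) + \y(t^*) = 0$ by \eqref{eq:cross}, and $w = s^*$ is handled the same way. The dual computation, using $\x \in \CoKer G$, $\y \in \CoKer H$ and the companion equations of \eqref{eq:cross} at $v^*$ and $t^*$, gives $\z \in \CoKer C$; and $\z$ is full because $\x$ and $\y$ are.

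The substantive step will be the one-dimensionality of $\Ker C$, and here I would follow the pattern of the coalescence proof (Theorem~\ref{the:coal}). Take $\w \in \Ker C$ and let $\w_G$, $\w_H$ be its restrictions to $V(G)$, $V(H)$. For every $w \in V(G) \setminus \{u^*\}$ the out-neighbourhood of $w$ in $C$ equals $G^+(w) \subseteq V(G)$, so $\Su_{\w_G}^+(w) = \Su_\w^+(w) = 0$. Since $G$ is ambi-nut, $\CoKer G = \langle \x \rangle$ with $\x$ full, so $u^*$ is a laevo-core vertex of $G$; Lemma~\ref{lem:ext} then upgrades this to $\Su_{\w_G}^+(u^*) = 0$ as well, whence $\w_G \in \Ker G = \langle \x \rangle$, say $\w_G = \lambda \x$. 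Likewise $\w_H = \mu \y$. To tie the two scalars together I would evaluate the kernel condition of $C$ at $u^*$: $0 = \Su_\w^+(u^*) = \lambda\bigl(\sum_{p \in G^+(u^*)} \x(p) - \x(v^*)\bigr) + \mu\,\y(t^*) = -\lambda\,\x(v^*) + \mu\,\x(v^*)$ by \eqref{eq:cross}, and since $\x(v^*) \neq 0$ this forces $\mu = \lambda$, so $\w = \lambda \z$. Hence $\Ker C = \langle \z \rangle$.

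To conclude, $\eta(C) = \dim \Ker C = \dim \CoKer C = 1$, and $\z$ is a nonzero element of $\CoKer C$, so $\CoKer C = \langle \z \rangle = \Ker C$ with $\z$ full; that is, $C$ is an ambi-nut digraph. I expect the only genuine subtlety to be the invocation of Lemma~\ref{lem:ext}: the restriction $\w_G$ does not \emph{a priori} satisfy the local kernel condition at $u^*$, because $u^*$ has acquired the out-neighbour $t^* \in V(H)$, so the laevo-core property of $u^*$ in $G$ (which is available precisely because $G$ is ambi-nut, not merely dextro-nut) is essential there; the analogous point for $H$ and the matching of $\lambda$ with $\mu$ via the cross-over arc are where condition \eqref{eq:cross} is used. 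Everything else is routine local-condition bookkeeping.
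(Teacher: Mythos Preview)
Your proof is correct and follows essentially the same approach as the paper's: construct the full vector $\z$ by gluing $\x$ and $\y$, then use Lemma~\ref{lem:ext} (via the ambi-nut hypothesis on $G$ and $H$) to force the restrictions of an arbitrary $\w \in \Ker C$ into $\langle \x \rangle$ and $\langle \y \rangle$, and finally match the two scalars. Your version is arguably tidier in two places: you make the matching $\lambda = \mu$ explicit via the local condition at $u^*$ (the paper normalises $\w(u^*) = \x(u^*)$ and then asserts the $H$-side ``analogously'' without spelling out the link), and you conclude $\CoKer C = \langle \z \rangle$ from $\dim \Ker C = \dim \CoKer C = 1$ together with $\z \in \CoKer C$, whereas the paper re-runs the argument on the reverse digraphs.
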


\begin{proof}
Let $C \coloneqq (G, u^*,v^*) \bowtie (H, s^*,t^*)$. Without loss of generality, we shall assume that $V(G)$ and $V(H)$ are disjoint, so that $V(C)$ is simply the union of $V(G)$ and $V(H)$.
Further, let $\z\colon V(C) \to \RR$ be defined by letting $\z(u)$ to be equal to $\x(u)$ for all $u\in V(G)$ and equal to $\y(u)$ for all $u\in V(H)$. By the definition of
the cross-over and by the assumption (\ref{eq:cross}), it follows that $\z$ is a full vector contained both in $\Ker C$ and in $\CoKer C$.
What remains to show is that $\Ker C$ and $\CoKer C$ are both $1$-dimensional.

Let $\w\colon V(C) \to \RR$ be an element of $\Ker C$ such that $\w(u^*) = \x(u^*)$ and let $\w_G$ be its restriction to $V(G)$.
Since the out-neighbourhood $C^+(v)$ is contained in $V(G)$ for every $v\in V(G) \setminus \{u^*\}$ (and is in fact equal to $G^+(v)$,
we see that $\w_G$ satisfies the local condition $\Su_{\w_G}^+(v) = 0$ for all $v\in V(G)$ except possibly for $v=u^*$. However, the existence of
a full vector in $\CoKer G$, together with Lemma~\ref{lem:ext},  implies that this condition is satisfied also for $v=u^*$, and thus that $\w_G \in \Ker G$.
Since $\Ker G$ is a one-dimensional space spanned by $\x$ and since $\w_G$ coincides with $\x$ in one vertex, we see that $\w_G = \x$.
Using an analogous argument, one can also show that the restriction of $\w$ to $V(H)$ equals $\y$, implying that $\w=\z$, as required.

Moreover, applying the above to the reverse digraphs $G^R$ and $H^R$, one can show that $\CoKer C$ is one-dimensional (and thus spanned by $\z$),
proving that $C$ is an ambi-nut.
\end{proof}

Note that a minor refinement of the above proof (by separately investigating the kernel and co-kernel) yields the slightly stronger claim that the cross-over of two bi-nut digraphs $G$ and $H$ with their kernel and co-kernel 
vectors being compatible on the arcs $u^* \to v^*$ and $s^* \to t^*$ is also a bi-nut. However, as the example in Figure~\ref{fig:dextrocross} shows, the cross-over of dextro-nut digraphs is not necessarily a dextro-nut digraph.

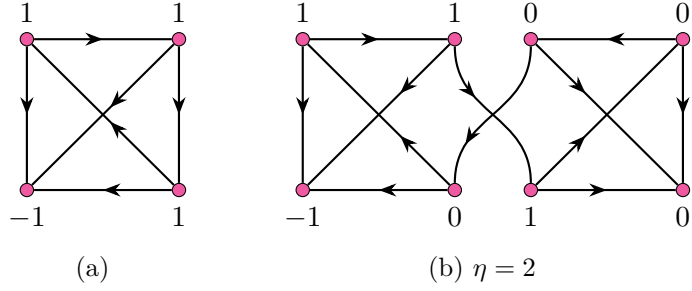
\begin{figure}[!htbp]
\centering
\subcaptionbox{\label{fig:dextrocross1}}{
\begin{tikzpicture}[scale=2]
\tikzstyle{vertex}=[draw,circle,font=\scriptsize,minimum size=5pt,inner sep=1pt,fill=magenta!80!white]
\tikzstyle{edge}=[draw,thick,decoration={markings,mark=at position 0.5 with {\arrow{Stealth}}},postaction=decorate]
\tikzstyle{sedge}=[draw,thick,decoration={markings,mark=at position 0.45 with {\arrow{Stealth}}},postaction=decorate]
\node[vertex,label=90:$1$] (v1) at (0, 0) {};
\node[vertex,label=90:$1$] (v2) at (1, 0) {};
\node[vertex,label=-90:$1$] (v3) at (1, -1) {};
\node[vertex,label=-90:$-1$] (v4) at (0, -1) {};
\path[edge] (v1) -- (v2);
\path[edge] (v2) -- (v3);
\path[sedge] (v3) -- (v1);
\path[edge] (v1) -- (v4);
\path[sedge] (v2) -- (v4);
\path[edge] (v3) -- (v4);
\end{tikzpicture}}
\qquad
\subcaptionbox{$\eta = 2$\label{fig:dextrocross2}}{
\begin{tikzpicture}[scale=2]
\tikzstyle{vertex}=[draw,circle,font=\scriptsize,minimum size=5pt,inner sep=1pt,fill=magenta!80!white]
\tikzstyle{edge}=[draw,thick,decoration={markings,mark=at position 0.5 with {\arrow{Stealth}}},postaction=decorate]
\tikzstyle{sedge}=[draw,thick,decoration={markings,mark=at position 0.35 with {\arrow{Stealth}}},postaction=decorate]
\tikzstyle{ledge}=[draw,thick,decoration={markings,mark=at position 0.75 with {\arrow{Stealth}}},postaction=decorate]
\node[vertex,label=90:$1$] (v1) at (0, 0) {};
\node[vertex,label=90:$1$] (v2) at (1, 0) {};
\node[vertex,label=-90:$0$] (v3) at (1, -1) {};
\node[vertex,label=-90:$-1$] (v4) at (0, -1) {};
\node[vertex,label=90:$0$] (u1) at (2.5, 0) {};
\node[vertex,label=90:$0$] (u2) at (1.5, 0) {};
\node[vertex,label=-90:$1$] (u3) at (1.5, -1) {};
\node[vertex,label=-90:$0$] (u4) at (2.5, -1) {};
\path[edge] (v1) -- (v2);
\path[sedge] (v3) -- (v1);
\path[edge] (v1) -- (v4);
\path[sedge] (v2) -- (v4);
\path[edge] (v3) -- (v4);
\path[sedge] (v2) .. controls ($ (v2) + (0, -0.5) $) and ($ (u3) + (0, 0.5) $) .. (u3);
\path[ledge] (u2) .. controls ($ (u2) + (0, -0.5) $) and ($ (v3) + (0, 0.5) $) .. (v3);
\path[edge] (u1) -- (u2);
\path[sedge] (u3) -- (u1);
\path[edge] (u1) -- (u4);
\path[sedge] (u2) -- (u4);
\path[edge] (u3) -- (u4);
\end{tikzpicture}}
\caption{Crossover of two copies of the dextro-nut digraph in (a) results in the digraph in (b), which is not a dextro-nut
as witnessed by the presence of zero entries in the displayed non-full kernel vector.}
\label{fig:dextrocross}
\end{figure}

\subsection{Multiplier constructions}

In \cite[Section 4.2]{NutOrbitPaper}  triangle- and pentagon-multiplier constructions for nut graphs were studied.
These constructions proved useful for obtaining nut graphs from any regular graph of even degree,
and were crucial for proving that any finite group can be realised by a nut graph~\cite{GroupNutPaper}.
Here, we generalise this theory to ambi-nut digraphs.

We begin by defining the notion of a \emph{gadget}. Let $M_{(i)}$ denote the matrix obtained from $M$ by deleting the $i$-th row.
For the present purpose, then:

\begin{definition}
\label{def:theMagicVector}
A gadget is a pair $(G, r)$, where $G$ is a digraph and $r \in V(G)$, with the property that there exists a full vector $\x$ that spans the respective
($1$-dimensional) kernels of both $A(G)_{(r)}$ and $A(G^R)_{(r)}$. The vertex $r$ will be called the \emph{root} of the gadget.
\end{definition}

Note that the vector $\x$ in the above definition satisfies $\Su_\x^+(v) = \Su_\x^-(v) = 0$ for all $v \in V(G) \setminus \{r\}$. Recall that $\Su_\x^+(v)$ and $\Su_\x^-(v)$ are defined in \eqref{eq:inAndOutLocal}.

\begin{proposition}
\label{prop:inDemandOutDemand}
Let $(G, r)$ be a gadget. Then $\Su_\x^+(r) = \Su_\x^-(r)$, where $\x$ is the full vector from Definition~\ref{def:theMagicVector}.
\end{proposition}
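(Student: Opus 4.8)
The plan is to use the bilinear pairing $\langle\cdot,\cdot\rangle$ introduced in the proof of Lemma~\ref{lem:ext}, namely $\langle \w,\z\rangle = \sum_{v\to u}\w(v)\z(u)$, but now with $\x$ playing both roles. Concretely, I would compute $\langle \x,\x\rangle$ in the two different ways afforded by its definition: summing first over out-neighbourhoods and summing first over in-neighbourhoods. Writing $S = \langle \x,\x\rangle$, the ``out'' grouping gives $S = \sum_{v\in V(G)} \x(v)\,\Su_\x^+(v)$, and the ``in'' grouping gives $S = \sum_{u\in V(G)} \x(u)\,\Su_\x^-(u)$.

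Now invoke the defining property of a gadget: since $\x$ spans the kernel of $A(G)_{(r)}$, we have $\Su_\x^+(v) = 0$ for every $v \in V(G)\setminus\{r\}$, and since $\x$ spans the kernel of $A(G^R)_{(r)}$, we have $\Su_\x^-(v) = 0$ for every $v\in V(G)\setminus\{r\}$ (this is exactly the remark following Definition~\ref{def:theMagicVector}). Substituting into the two expressions for $S$, all terms except the one indexed by $r$ vanish, so
\begin{equation*}
\x(r)\,\Su_\x^+(r) \;=\; S \;=\; \x(r)\,\Su_\x^-(r).
\end{equation*}
Since $\x$ is full, $\x(r) \neq 0$, and dividing through yields $\Su_\x^+(r) = \Su_\x^-(r)$, as claimed.

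I do not anticipate a genuine obstacle here: the argument is a direct specialisation of the double-counting identity already established for $\langle\cdot,\cdot\rangle$ in Lemma~\ref{lem:ext}, and the only ingredients are that identity, the gadget hypothesis on $\x$, and fullness of $\x$. The one point requiring a little care is making sure the two ways of expanding $\langle\x,\x\rangle$ are written correctly — the ``out'' expansion groups the sum $\sum_{v\to u}$ by the tail $v$ (yielding $\Su_\x^+(v)$ weighted by $\x(v)$), while the ``in'' expansion groups it by the head $u$ (yielding $\Su_\x^-(u)$ weighted by $\x(u)$) — but this is exactly the same bookkeeping done in the proof of Lemma~\ref{lem:ext}, so it is routine. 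If one prefers a matrix formulation, the same computation reads $\x^\intercal A(G)\,\x$ evaluated two ways, with all rows/columns other than the $r$-th killed by the gadget condition; either phrasing works.
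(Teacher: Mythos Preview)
Your proposal is correct and is essentially identical to the paper's own proof: the paper also computes $M = \sum_{v\to u}\x(v)\x(u)$ by grouping over heads and over tails, collapses each sum to the single $r$-term using the gadget condition, and cancels $\x(r)\neq 0$. The only cosmetic difference is that the paper names the quantity $M$ rather than $\langle\x,\x\rangle$.
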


\begin{proof}
We shall use a double-counting approach, as in the proof of Lemma~\ref{lem:ext}.
Let
\begin{equation}
\label{eq:normx}
M :=  \sum_{u,v \in V(G) \atop v\to u} \x(v)\x(u).
\end{equation}
On one hand, we see that
\begin{equation}
M = \sum_{u \in V(G)} \sum_{v\in G^-(u)} \x(v)\x(u) = \sum_{u \in V(G)} \x(u) \Su_\x^-(u) = \x(r) \Su_\x^-(r).
\end{equation}
On the other hand
\begin{equation}
M = \sum_{v \in V(G)} \sum_{u\in G^+(v)} \x(v)\x(u) = \sum_{v \in V(G)}  \x(v) \Su_\x^+(v)  = \x(r) \Su_\x^+(r).
\end{equation}
Since, by assumption, $\x$ is a full vector, it follows that
$\Su_\x^-(r) = \Su_\x^+(r)$, as claimed.
\end{proof}

\begin{definition}
The \emph{demand} of a gadget $(G, r)$, denoted $\dem (G, r)$, is the value $-\Su_\x^+(r) / \x(r)$, where $\x$ is the full vector from Definition~\ref{def:theMagicVector}.
\end{definition}

The demand is well defined, since $\x(r) \neq 0$ and $\x$ is unique up to scalar multiplication.
By Proposition~\ref{prop:inDemandOutDemand}, $\dem (G, r) = -\Su_\x^-(r) / \x(r)$. Note that $(G, r)$ is a gadget with $\dem(G, r) = 0$ if $G$ is an 
ambi-nut digraph and $r$ is an arbitrary vertex of $G$. Moreover, the following holds:

\begin{proposition}
\label{prop:gadgetsAreAmbi}
Let $(G, r)$ be a gadget with $\dem(G, r) = 0$. Then $G$ is an ambi-nut digraph. 
\end{proposition}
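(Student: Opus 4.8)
The plan is to translate the gadget hypothesis and the vanishing of the demand into the two local conditions \eqref{eq:zerosum} and \eqref{eq:zerosumn} holding \emph{everywhere}, and then to sandwich $\Ker G$ (and $\CoKer G$) between a nonzero vector it contains and a one-dimensional space containing it.

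First I would recall Definition~\ref{def:theMagicVector}: there is a full vector $\x$ spanning the one-dimensional kernels of $A(G)_{(r)}$ and of $A(G^R)_{(r)}$, which means precisely that $\Su_\x^+(v) = 0$ and $\Su_\x^-(v) = 0$ for every $v \in V(G)\setminus\{r\}$. Next I would use $\dem(G,r)=0$: by the definition of demand, $\Su_\x^+(r) = -\dem(G,r)\,\x(r) = 0$, and Proposition~\ref{prop:inDemandOutDemand} then yields $\Su_\x^-(r) = \Su_\x^+(r) = 0$ as well. Hence $\x$ satisfies \eqref{eq:zerosum} and \eqref{eq:zerosumn} at every vertex, so $\x \in \Ker G \cap \CoKer G$, and $\x$ is full.

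It remains to show $\Ker G$ and $\CoKer G$ are one-dimensional. Here I would use the trivial inclusion $\Ker G = \Ker A(G) \subseteq \Ker A(G)_{(r)}$ --- deleting the row indexed by $r$ can only enlarge the solution space --- and note the right-hand side is one-dimensional (spanned by $\x$) by the gadget hypothesis; since $\x \in \Ker G$ is nonzero, this forces $\Ker G = \langle \x\rangle$. Applying the same reasoning to $G^R$, whose adjacency matrix is $A(G)^\intercal$, gives $\CoKer G = \Ker A(G^R) \subseteq \Ker A(G^R)_{(r)} = \langle \x\rangle$, hence $\CoKer G = \langle\x\rangle$. Thus $\Ker G = \CoKer G = \langle\x\rangle$ with $\x$ full, which is exactly the definition of an ambi-nut digraph.

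I do not expect a real obstacle --- the argument is bookkeeping with the local conditions --- but the point requiring the most care is the row-deletion convention: a zero in the row of $A(G)$ indexed by $v$ applied to $\x$ encodes $\Su_\x^+(v)=0$, whereas the corresponding row of $A(G^R)=A(G)^\intercal$ encodes $\Su_\x^-(v)=0$, so one must keep the dextro/laevo sides matched when invoking the two parts of the gadget definition and when passing to $G^R$.
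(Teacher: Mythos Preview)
Your proof is correct and follows essentially the same approach as the paper: both first verify that $\x\in\Ker G\cap\CoKer G$ via the gadget definition, $\dem(G,r)=0$, and Proposition~\ref{prop:inDemandOutDemand}, and then use the containment $\Ker G\subseteq\Ker A(G)_{(r)}$ (which the paper phrases as a contradiction argument) to force one-dimensionality. Your version is slightly more explicit in handling $\CoKer G$ separately, whereas the paper relies implicitly on $\dim\Ker G=\dim\CoKer G$ once $\eta(G)=1$ is established.
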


\begin{proof}
Let $\x$ be the full vector from Definition~\ref{def:theMagicVector}. We already noted that  $\Su_\x^+(v) = \Su_\x^-(v) = 0$ for all $v \in V(G) \setminus \{r\}$.
From the definition of demand and Proposition~\ref{prop:inDemandOutDemand} it follows that $\Su_\x^+(r) = \Su_\x^-(r) = 0$. Hence, \eqref{eq:zerosum} and 
\eqref{eq:zerosumn} hold, so $\x \in \Ker G$ and $\x \in \CoKer G$. Suppose that $\eta(G) > 1$. Then there exists $\y \in \Ker G$ which is linearly independent from $\x$.
But then $\y$ is also in the kernel of $A(G)_{(r)}$, contradicting the requirement of Definition~\ref{def:theMagicVector} that $\Ker A(G)_{(r)}$ is $1$-dimensional.
Thus, $\eta(G) = 1$ and $G$ is therefore an ambi-nut digraph.
\end{proof}

Figure~\ref{fig:bunchOfGadgets} shows examples of gadgets with demands from the set $\{1, -1/2, -1/3, -1, 2/3\}$.
Note that the demand of a gadget is always a rational number.

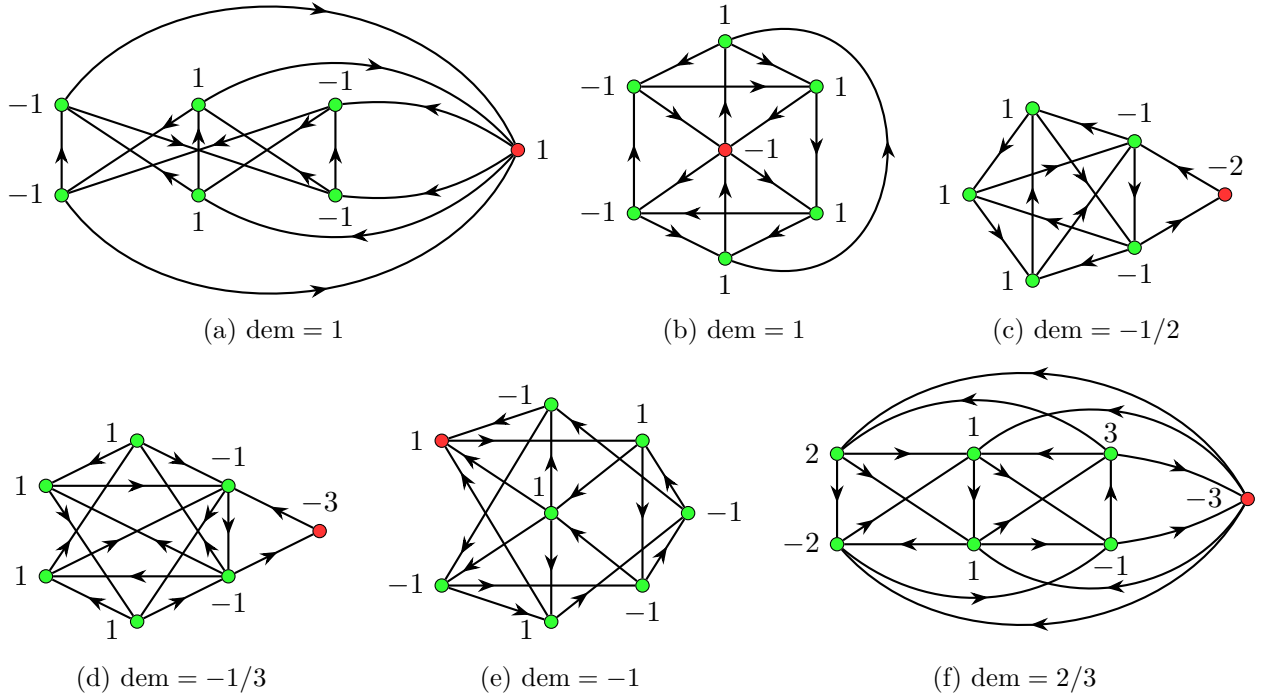
\begin{figure}[!h]
\centering
\subcaptionbox{$\dem = 1$}{
\begin{tikzpicture}[scale=1.2]
\clip (-0.6, 1.15) rectangle (5.35, -2.15);
\tikzstyle{vertex}=[draw,circle,font=\scriptsize,minimum size=5pt,inner sep=1pt,fill=green!80!white]
\tikzstyle{edge}=[draw,thick,decoration={markings,mark=at position 0.55 with {\arrow{Stealth}}},postaction=decorate]
\tikzstyle{sedge}=[draw,thick,decoration={markings,mark=at position 0.25 with {\arrow{Stealth}}},postaction=decorate]
\tikzstyle{medge}=[draw,thick,decoration={markings,mark=at position 0.45 with {\arrow{Stealth}}},postaction=decorate]
\tikzstyle{ledge}=[draw,thick,decoration={markings,mark=at position 0.8 with {\arrow{Stealth}}},postaction=decorate]
\node[vertex,label=180:$-1$] (v4) at (0, 0) {};
\node[vertex,label=90:$1$] (v3) at (1.5, 0) {};
\node[vertex,label={[yshift=-2pt]90:$-1$}] (v5) at (3, 0) {};
\node[vertex,label=180:$-1$] (v2) at (0, -1) {};
\node[vertex,label=-90:$1$] (v0) at (1.5, -1) {};
\node[vertex,label={[yshift=2pt]-90:$-1$}] (v1) at (3, -1) {};

\node[vertex,label=0:$1$,fill=red!80!white] (v6) at (5, -0.5) {};
\path[medge] (v4) -- (v1) {};
\path[sedge] (v0) -- (v4) {};
\path[edge] (v2) -- (v4) {};

\path[sedge] (v3) -- (v2) {};
\path[ledge] (v0) -- (v3) {};
\path[sedge] (v1) -- (v3) {};

\path[edge] (v1) -- (v5) {};
\path[medge] (v5) -- (v2) {};
\path[sedge] (v5) -- (v0) {};
\path[edge] (v6) to[bend left=20] (v1) {};
\path[edge] (v6) to[bend right=20] (v5) {};
\path[edge] (v6) to[bend left=40] (v0) {};
\path[edge] (v4) to[bend left=60] (v6) {};
\path[edge] (v3) to[bend left=40] (v6) {};
\path[edge] (v2) to[bend right=60] (v6) {};
\end{tikzpicture}
}
\subcaptionbox{$\dem = 1$}{
\begin{tikzpicture}[scale=1.2]
\clip (-0.6, 0.95) rectangle (2.9, -2.35);
\tikzstyle{vertex}=[draw,circle,font=\scriptsize,minimum size=5pt,inner sep=1pt,fill=green!80!white]
\tikzstyle{cedge}=[draw,thick,decoration={markings,mark=at position 0.52 with {\arrow{Stealth}}},postaction=decorate]
\tikzstyle{edge}=[draw,thick,decoration={markings,mark=at position 0.55 with {\arrow{Stealth}}},postaction=decorate]
\tikzstyle{ledge}=[draw,thick,decoration={markings,mark=at position 0.75 with {\arrow{Stealth}}},postaction=decorate]
\tikzstyle{medge}=[draw,thick,decoration={markings,mark=at position 0.45 with {\arrow{Stealth}}},postaction=decorate]
\node[vertex,label=180:$-1$] (v3) at (0, 0) {};
\node[vertex,label=0:$1$] (v1) at (2, 0) {};
\node[vertex,label=90:$1$] (v5) at (1, 0.5) {};
\node[vertex,label=180:$-1$] (v0) at (0, -1.4) {};
\node[vertex,label=0:$1$] (v4) at (2, -1.4) {};
\node[vertex,label=-90:$1$] (v2) at (1, {-1.4-0.5}) {};
\node[vertex,label=0:$-1$,fill=red!80!white] (v6) at (1, -0.7) {};
\path[ledge] (v3) -- (v1) {};
\path[edge] (v5) -- (v1) {};
\path[edge] (v5) -- (v3) {};
\path[ledge] (v4) -- (v0) {};
\path[edge] (v4) -- (v2) {};
\path[edge] (v0) -- (v2) {};
\path[edge] (v0) -- (v3) {};
\path[edge] (v1) -- (v4) {};
\path[cedge] (v2) .. controls ($ (v2) + (-20:2.5) $) and ($ (v5) + (20:2.5) $) ..  (v5) {};
\path[edge] (v6) -- (v0) {};
\path[edge] (v6) -- (v4) {};
\path[medge] (v6) -- (v5) {};
\path[edge] (v1) -- (v6) {};
\path[ledge] (v2) -- (v6) {};
\path[edge] (v3) -- (v6) {};
\end{tikzpicture}
}
\subcaptionbox{$\dem = -1/2$}{
\begin{tikzpicture}[scale=1.2]
\tikzstyle{vertex}=[draw,circle,font=\scriptsize,minimum size=5pt,inner sep=1pt,fill=green!80!white]
\tikzstyle{edge}=[draw,thick,decoration={markings,mark=at position 0.55 with {\arrow{Stealth}}},postaction=decorate]
\node[vertex,label=180:$1$] (v0) at (180:1) {};
\node[vertex,label=180:$1$] (v2) at ({180 + 72}:1) {};
\node[vertex,label=-90:$-1$] (v5) at ({180 + 2*72}:1) {};
\node[vertex,label=90:$-1$] (v4) at ({180 + 3*72}:1) {};
\node[vertex,label=180:$1$] (v3) at ({180 + 4*72}:1) {};
\node[vertex,label=90:$-2$,fill=red!80!white] (v1) at (0:1.8) {};
\path[edge] (v0) -- (v2) {};
\path[edge] (v5) -- (v2) {};
\path[edge] (v4) -- (v5) {};
\path[edge] (v4) -- (v3) {};
\path[edge] (v3) -- (v0) {};
\path[edge] (v5) -- (v0) {};
\path[edge] (v0) -- (v4) {};
\path[edge] (v2) -- (v4) {};
\path[edge] (v2) -- (v3) {};
\path[edge] (v3) -- (v5) {};
\path[edge] (v1) -- (v4) {};
\path[edge] (v5) -- (v1) {};
\end{tikzpicture}
}

\subcaptionbox{$\dem = -1/3$}{
\begin{tikzpicture}[scale=1.2]
\tikzstyle{vertex}=[draw,circle,font=\scriptsize,minimum size=5pt,inner sep=1pt,fill=green!80!white]
\tikzstyle{edge}=[draw,thick,decoration={markings,mark=at position 0.55 with {\arrow{Stealth}}},postaction=decorate]
\tikzstyle{sedge}=[draw,thick,decoration={markings,mark=at position 0.25 with {\arrow{Stealth}}},postaction=decorate]
\tikzstyle{medge}=[draw,thick,decoration={markings,mark=at position 0.40 with {\arrow{Stealth}}},postaction=decorate]
\node[vertex,label=90:$-1$] (v5) at (0, 1) {};
\node[vertex,label=-90:$-1$] (v6) at (0, 0) {};
\node[vertex,label={[yshift=3pt]180:$1$}] (v3) at (-1, 1.5) {};
\node[vertex,label={[yshift=-3pt]180:$1$}] (v4) at (-1, -0.5) {};
\node[vertex,label=180:$1$] (v1) at (-2, 1) {};
\node[vertex,label=180:$1$] (v0) at (-2, 0) {};
\node[vertex,label=90:$-3$,fill=red!80!white] (v2) at (1, 0.5) {};
\path[edge] (v3) -- (v1) {};
\path[sedge] (v1) -- (v4) {};
\path[edge] (v4) -- (v0) {};
\path[sedge] (v0) -- (v3) {};
\path[edge] (v6) -- (v0) {};
\path[medge] (v6) -- (v1) {};
\path[edge] (v1) -- (v5) {};
\path[medge] (v0) -- (v5) {};
\path[sedge] (v6) -- (v3) {};
\path[edge] (v3) -- (v5) {};
\path[sedge] (v5) -- (v4) {};
\path[edge] (v4) -- (v6) {};
\path[edge] (v5) -- (v6) {};
\path[edge] (v2) -- (v5) {};
\path[edge] (v6) -- (v2) {};
\end{tikzpicture}
}
\subcaptionbox{$\dem = -1$}{
\begin{tikzpicture}[scale=1.2]
\tikzstyle{vertex}=[draw,circle,font=\scriptsize,minimum size=5pt,inner sep=1pt,fill=green!80!white]
\tikzstyle{edge}=[draw,thick,decoration={markings,mark=at position 0.55 with {\arrow{Stealth}}},postaction=decorate]
\tikzstyle{sedge}=[draw,thick,decoration={markings,mark=at position 0.25 with {\arrow{Stealth}}},postaction=decorate]
\tikzstyle{ledge}=[draw,thick,decoration={markings,mark=at position 0.85 with {\arrow{Stealth}}},postaction=decorate]
\tikzstyle{lledge}=[draw,thick,decoration={markings,mark=at position 0.90 with {\arrow{Stealth}}},postaction=decorate]
\node[vertex,label={[yshift=3pt]180:$-1$}] (v1) at (0,1.2) {};
\node[vertex,label={[xshift=3pt]110:$1$}] (v7) at (0,0) {};
\node[vertex,label={[yshift=-3pt]180:$1$}] (v2) at (0,-1.2) {};
\node[vertex,label=90:$1$] (v0) at (1,0.8) {};
\node[vertex,label=0:$-1$] (v6) at (1.5,0) {};
\node[vertex,label=-90:$-1$] (v3) at (1,-0.8) {};
\node[vertex,label=180:$1$,fill=red!80!white] (v4) at (-1.2,0.8) {};
\node[vertex,label=180:$-1$] (v5) at (-1.2,-0.8) {};
\path[edge] (v7) -- (v1) {};
\path[edge] (v7) -- (v2) {};
\path[ledge] (v0) -- (v7) {};
\path[ledge] (v3) -- (v7) {};
\path[edge] (v0) -- (v3) {};
\path[edge] (v3) -- (v6) {};
\path[edge] (v6) -- (v0) {};
\path[sedge] (v2) -- (v6) {};
\path[lledge] (v6) -- (v1) {};
\path[ledge] (v5) -- (v2) {};
\path[sedge] (v5) -- (v3) {};
\path[edge] (v1) -- (v4) {};
\path[sedge] (v4) -- (v0) {};
\path[ledge] (v7) -- (v5) {};
\path[ledge] (v7) -- (v4) {};
\path[ledge] (v1) -- (v5) {};
\path[ledge] (v2) -- (v4) {};
\end{tikzpicture}
}
\subcaptionbox{$\dem = 2/3$}{
\begin{tikzpicture}[scale=1.2]
\clip (-0.6, 1.15) rectangle (4.60, -2.15);
\tikzstyle{vertex}=[draw,circle,font=\scriptsize,minimum size=5pt,inner sep=1pt,fill=green!80!white]
\tikzstyle{edge}=[draw,thick,decoration={markings,mark=at position 0.55 with {\arrow{Stealth}}},postaction=decorate]
\tikzstyle{sedge}=[draw,thick,decoration={markings,mark=at position 0.25 with {\arrow{Stealth}}},postaction=decorate]
\tikzstyle{medge}=[draw,thick,decoration={markings,mark=at position 0.45 with {\arrow{Stealth}}},postaction=decorate]
\tikzstyle{ledge}=[draw,thick,decoration={markings,mark=at position 0.7 with {\arrow{Stealth}}},postaction=decorate]
\node[vertex,label=180:$2$] (v2) at (0, 0) {};
\node[vertex,label=90:$1$] (v4) at (1.5, 0) {};
\node[vertex,label={[yshift=-2pt]90:$3$}] (v0) at (3, 0) {};
\node[vertex,label=180:$-2$] (v1) at (0, -1) {};
\node[vertex,label=-90:$1$] (v6) at (1.5, -1) {};
\node[vertex,label={[yshift=2pt]-90:$-1$}] (v3) at (3, -1) {};
\node[vertex,label={[xshift=-3pt]180:$-3$},fill=red!80!white] (v5) at (4.5, -0.5) {};
\path[edge] (v2) -- (v1) {};
\path[sedge] (v2) -- (v6) {};
\path[edge] (v2) -- (v4) {};
\path[sedge] (v1) -- (v4) {};
\path[edge] (v6) -- (v1) {};
\path[sedge] (v4) -- (v3) {};
\path[sedge] (v6) -- (v0) {};
\path[ledge] (v3) -- (v0) {};
\path[edge] (v4) -- (v6) {};
\path[edge] (v6) -- (v3) {};
\path[edge] (v0) -- (v4) {};
\path[edge] (v0) to[bend left=10] (v5) {};
\path[edge] (v5) to[bend right=50] (v4) {};
\path[edge] (v5) to[bend right=55] (v2) {};
\path[edge] (v5) to[bend left=55] (v1) {};
\path[edge] (v5) to[bend left=50] (v6) {};
\path[edge] (v3) to[bend right=10] (v5) {};
\path[edge] (v0) to[bend right=40] (v2) {};
\path[edge] (v1) to[bend right=40] (v3) {};
\end{tikzpicture}
}
\caption{Examples of gadgets with various demands. The root vertex is coloured red. Labels show the vector $\x$ from Definition~\ref{def:theMagicVector}.}
\label{fig:bunchOfGadgets}
\end{figure}

\begin{lemma}[{\cite[Lemma 1.2]{BanjaLukaPaper}}]
\label{lem:rationalRootsDigraphs}
Let $G$ be a digraph. If $\lambda$ is a rational eigenvalue of $G$ then $\lambda$ is an integer.
\end{lemma}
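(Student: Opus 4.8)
The plan is to reduce the statement to the rational root theorem applied to the characteristic polynomial of the adjacency matrix. First I would record the key structural fact: the adjacency matrix $A(G)$ has all entries in $\{0,1\} \subseteq \ZZ$, so the characteristic polynomial
\begin{equation}
\chi_G(x) = \det\bigl(xI_n - A(G)\bigr)
\end{equation}
is a \emph{monic} polynomial of degree $n = |V(G)|$ whose remaining coefficients are integers, since each coefficient is (up to sign) a sum of principal minors of $A(G)$, and these are integers as $A(G) \in \ZZ^{n\times n}$. Every eigenvalue of $G$ (equivalently, of $A(G)$, whether one counts algebraic or geometric multiplicities) is by definition a root of $\chi_G$.

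The second step is the number-theoretic input. Suppose $\lambda \in \QQ$ is an eigenvalue of $G$, and write $\lambda = p/q$ in lowest terms with $q \geq 1$. Since $\chi_G(\lambda) = 0$ and $\chi_G$ is monic with integer coefficients, the rational root theorem (equivalently: a rational number that is an algebraic integer lies in $\ZZ$, because $\ZZ$ is integrally closed in $\QQ$) forces $q$ to divide the leading coefficient $1$, hence $q = 1$ and $\lambda = p \in \ZZ$. This is exactly the claim.

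I do not anticipate a genuine obstacle here: the entire argument is the observation that $A(G)$ is an integer matrix together with a standard fact about monic integer polynomials. The only points requiring any care are (i) making explicit that all entries of $A(G)$ are integers (immediate from the definition $A(G)_{ij} \in \{0,1\}$), so that $\chi_G \in \ZZ[x]$ is monic, and (ii) stating the rational-root/integral-closure fact in the precise form needed. Since the lemma is attributed to \cite[Lemma~1.2]{BanjaLukaPaper}, one could alternatively simply cite it; the short self-contained argument above is included for completeness.
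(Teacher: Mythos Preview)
Your proposal is correct and takes essentially the same approach as the paper: both observe that the characteristic polynomial of $A(G)$ is monic with integer coefficients and then invoke the Rational Root Theorem. Your write-up simply adds a bit more detail on why the coefficients are integers and spells out the lowest-terms argument, but the underlying idea is identical.
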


\begin{proof}
The characteristic polynomial $\varphi$ of a digraph is a monic polynomial with integer coefficients. By the Rational Root Theorem, the rational roots of $\varphi$ are integers.
\end{proof}

Lemma~\ref{lem:rationalRootsDigraphs} will be significant for the formulation of the main theorem of the present section (Theorem~\ref{thm:theGrandMultiplierTheorem}), for which
we will require gadgets that have integer demand. The next theorem shows that ambi-nut digraphs provide a ready source of gadgets with demand $-1$.

\begin{theorem}
\label{thm:maxineyGadgets}
Let $H$ be an ambi-nut digraph and let $\widetilde{\x} \in \ker H$ be a non-trivial vector. Let $(u_1, u_2) \in E(H)$ such that $\widetilde{\x}(u_1) = \widetilde{\x}(u_2)$.
Let $G$ be a digraph with the vertex set $V(G) = V(H) \cup \{ u_0 \}$ and the arc-set $E(G) = E(H) \setminus \{ (u_1, u_2) \} \cup \{ (u_1, u_0), (u_0, u_2) \}$.
Then $(G, u_0)$ is a gadget with $\dem (G, u_0) = -1$.
\end{theorem}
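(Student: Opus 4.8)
The plan is to exhibit the full vector required by Definition~\ref{def:theMagicVector} explicitly, verify the $1$-dimensionality of the two relevant kernels via Lemma~\ref{lem:ext}, and then read off the demand. First I would define $\x \in \RR^{V(G)}$ by $\x(v) = \widetilde{\x}(v)$ for $v \in V(H)$ and $\x(u_0) = \widetilde{\x}(u_1) = \widetilde{\x}(u_2)$, which is unambiguous by the hypothesis $\widetilde{\x}(u_1) = \widetilde{\x}(u_2)$; since $\widetilde{\x}$ is full (because $H$ is ambi-nut), $\x$ is full as well. Next I would check the local conditions at every vertex other than $u_0$. For $v \in V(G) \setminus \{u_0, u_1\}$ the out-neighbourhood of $v$ in $G$ coincides with that in $H$, so $\Su_\x^+(v) = 0$ because $\widetilde{\x} \in \Ker H$; at $u_1$, replacing the out-neighbour $u_2$ by $u_0$ changes the sum by $\x(u_0) - \widetilde{\x}(u_2) = 0$, so $\Su_\x^+(u_1) = 0$ as well. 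The symmetric computation, using $\widetilde{\x} \in \CoKer H$ (again because $H$ is ambi-nut), gives $\Su_\x^-(v) = 0$ for all $v \in V(G) \setminus \{u_0\}$, the only nontrivial case being $v = u_2$, where replacing the in-neighbour $u_1$ by $u_0$ changes the sum by $\x(u_0) - \widetilde{\x}(u_1) = 0$. Hence $\x$ lies in the kernels of both $A(G)_{(u_0)}$ and $A(G^R)_{(u_0)}$.

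The main step is to show these kernels are exactly $1$-dimensional and thus spanned by $\x$. Let $\q \in \Ker A(G)_{(u_0)}$, i.e.\ $\Su_\q^+(v) = 0$ for all $v \in V(H)$, and let $\q_H$ denote its restriction to $V(H)$. For $v \in V(H) \setminus \{u_1\}$ the out-neighbourhood is unchanged, so $\Su_{\q_H}^+(v) = 0$ computed in $H$; since $H$ is ambi-nut, $u_1$ is a laevo-core vertex of $H$, and Lemma~\ref{lem:ext} applied to $H$ forces $\Su_{\q_H}^+(u_1) = 0$ too, whence $\q_H \in \Ker H = \langle \widetilde{\x} \rangle$, say $\q_H = \lambda \widetilde{\x}$. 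Expanding $\Su_\q^+(u_1) = 0$ in $G$ then yields $\q(u_0) = \q(u_2) = \lambda \widetilde{\x}(u_2) = \lambda \x(u_0)$, so $\q = \lambda \x$ on all of $V(G)$; thus $\Ker A(G)_{(u_0)} = \langle \x \rangle$. Applying the same argument to the reverse digraph (using that every vertex of $H$, in particular $u_2$, is dextro-core) gives $\Ker A(G^R)_{(u_0)} = \langle \x \rangle$. Therefore $(G, u_0)$ is a gadget whose distinguished full vector is $\x$.

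Finally, since $G^+(u_0) = \{u_2\}$ we have $\Su_\x^+(u_0) = \x(u_2) = \x(u_0)$, so $\dem(G, u_0) = -\Su_\x^+(u_0)/\x(u_0) = -1$, as required. I expect the only delicate point to be the bookkeeping in the $1$-dimensionality argument: one must notice that the failure of $\q_H$ to be a kernel (resp.\ co-kernel) vector of $H$ is concentrated at the single vertex $u_1$ (resp.\ $u_2$), which is precisely the situation Lemma~\ref{lem:ext} handles, and that the ambi-nut hypothesis on $H$ is exactly what supplies the core property of that vertex.
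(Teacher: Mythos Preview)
Your proof is correct and follows essentially the same approach as the paper's: define the extension $\x$ of $\widetilde{\x}$ to $u_0$, verify the local conditions away from $u_0$, use Lemma~\ref{lem:ext} (with $u_1$ laevo-core, resp.\ $u_2$ dextro-core) to reduce any element of the relevant kernel to a scalar multiple of $\widetilde{\x}$ on $V(H)$, and then compare the local condition at $u_1$ in $G$ versus in $H$ to pin down the value at $u_0$. Your write-up is in fact slightly more explicit than the paper's about which vertex plays the role of $w$ in Lemma~\ref{lem:ext} and why the ambi-nut hypothesis supplies the needed core property.
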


\begin{proof}
Let us define $\x\colon V(G) \to \mathbb{R}$ by $\x(v) = \widetilde{\x}(v)$ for $v \in V(H)$ and $\x(u_0) = \widetilde{\x}(u_1)$. It is clear that $\x$ is full, $\x \in \ker A(G)_{(u_0)}$ and 
$\x \in \ker A(G^R)_{(u_0)}$. Suppose that $\y \in \ker A(G)_{(u_0)}$. Let us define $\widetilde{\y}\colon V(H) \to \mathbb{R}$ by $\widetilde{\y}(v) = \y(v)$ for $v \in V(H)$.
By Lemma~\ref{lem:ext}, $\widetilde{\y}(v) \in \ker H$. But $H$ is an ambi-nut digraph, so $\widetilde{\y} = \mu \widetilde{\x}$ for some $\mu \in \mathbb{R}$.
Now, $\mu\widetilde{\x}$ satisfies the out-local condition at vertex $u_1$ in the graph $H$, i.e.\
\begin{equation}
\label{eq:proofThm34_1} 
\Su_{\mu\widetilde{\x}}^+(u_1) = \sum_{v \in H^+(u_1)}\mu\widetilde{\x}(v) = \sum_{v \in H^+(u_1) \setminus \{u_2\}}\mu\widetilde{\x}(v) + \mu\widetilde{\x}(u_2) = 0.
\end{equation}
Similarly, $\y$ satisfies
the out-local condition at vertex $u_1$ in the graph $G$, i.e.\ 
\begin{equation}
\label{eq:proofThm34_2}
\Su_{\y}^+(u_1) = \sum_{v \in G^+(u_1)}\y(v) = \sum_{v \in G^+(u_1) \setminus \{u_0\}}\y(v) + \y(u_0) = 0.
\end{equation}
From \eqref{eq:proofThm34_1} and \eqref{eq:proofThm34_2} we obtain $\y(u_0) = \mu\widetilde{\x}(u_2)$. This implies that $\y = \mu \x$ and thus $\ker A(G)_{(u_0)}$ is $1$-dimensional.
By analogy we can show that $\ker A(G^R)_{(u_0)}$ is also $1$-dimensional, so $\x$ satisfies the requirements of Definition~\ref{def:theMagicVector}, and thus $(G, u_0)$ is a gadget.
As $u_0$ has out-degree one, connecting only to $u_2$, it is straightforward to see that $\dem (G, u_0) = -1$.
\end{proof}

Note that the condition in Theorem~\ref{thm:maxineyGadgets}, namely the existence of an arc $(u_1, u_2) \in E(H)$ such that $\widetilde{\x}(u_1) = \widetilde{\x}(u_2)$, is not obeyed by all ambi-nut digraphs. The smallest
examples among oriented graphs that are unsuitable for the theorem have $10$ vertices; there are exactly $6$ of them~\cite{GitHubSupplement}. All ambi-nut digraphs among oriented graphs up to order $9$ contain at least one `suitable' arc.

In light of the following lemma, gadgets with fractional demand can also be useful.

\begin{lemma}
\label{lem:compositeLemma}
Let $(G_1, r_1)$ and $(G_2, r_2)$ be gadgets. Then the digraph $(G_1, r_1) \odot (G_2, r_2)$ with the root being the merged vertex is a gadget with demand $\dem(G_1, r_1) + \dem(G_2, r_2)$.
\end{lemma}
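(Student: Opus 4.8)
The plan is to write out the definitions explicitly and combine the kernel data for $G_1$ and $G_2$ exactly as in the coalescence theorem (Theorem~\ref{the:coal}), but now tracking the demand. Let $\x_1$ be the full vector from Definition~\ref{def:theMagicVector} for $(G_1,r_1)$, so that $\Su_{\x_1}^+(v)=\Su_{\x_1}^-(v)=0$ for all $v\in V(G_1)\setminus\{r_1\}$, and similarly let $\x_2$ be the corresponding vector for $(G_2,r_2)$. By scaling we may assume $\x_1(r_1)=\x_2(r_2)$; denote this common value by $c$ (which is nonzero since both vectors are full). Let $C=(G_1,r_1)\odot(G_2,r_2)$, write $\rho$ for the merged root vertex, and define $\x\in\RR^{V(C)}$ by $\x(v)=\x_1(v)$ for $v\in V(G_1)$, $\x(v)=\x_2(v)$ for $v\in V(G_2)$, and $\x(\rho)=c$; this is well defined and full.

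First I would check that $\x$ spans $\ker A(C)_{(\rho)}$. For any vertex $v\in V(G_1)\setminus\{r_1\}$ we have $C^+(v)=G_1^+(v)$ by the construction of coalescence, so $\Su_\x^+(v)=\Su_{\x_1}^+(v)=0$; the same holds for vertices of $G_2$. Hence $\x$ lies in $\ker A(C)_{(\rho)}$. For uniqueness, suppose $\y\in\ker A(C)_{(\rho)}$. Its restriction $\y_1$ to $V(G_1)$ satisfies $\Su_{\y_1}^+(v)=0$ for all $v\in V(G_1)\setminus\{r_1\}$, so $\y_1\in\ker A(G_1)_{(r_1)}$, which is one-dimensional, whence $\y_1=\lambda\x_1$ for some scalar $\lambda$; similarly $\y|_{V(G_2)}=\mu\x_2$. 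Since $\y(\rho)$ equals both $\lambda\x_1(r_1)=\lambda c$ and $\mu\x_2(r_2)=\mu c$, and $c\neq0$, we get $\lambda=\mu$, so $\y=\lambda\x$. An identical argument applied to the reverse digraph $C^R$ (using $A(C^R)=A(C)^\intercal$ and $(G_1\odot G_2)^R=G_1^R\odot G_2^R$) shows $\ker A(C^R)_{(\rho)}$ is one-dimensional, spanned by the same $\x$. Therefore $(C,\rho)$ is a gadget.

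Finally I would compute the demand. By construction, $C^+(\rho)=G_1^+(r_1)\cup G_2^+(r_2)$ as a disjoint union, so
\begin{equation}
\Su_\x^+(\rho)=\sum_{u\in G_1^+(r_1)}\x_1(u)+\sum_{u\in G_2^+(r_2)}\x_2(u)=\Su_{\x_1}^+(r_1)+\Su_{\x_2}^+(r_2).
\end{equation}
Dividing by $-\x(\rho)=-c=-\x_1(r_1)=-\x_2(r_2)$ and using the definition of demand term by term gives
\begin{equation}
\dem(C,\rho)=\frac{-\Su_\x^+(\rho)}{\x(\rho)}=\frac{-\Su_{\x_1}^+(r_1)}{\x_1(r_1)}+\frac{-\Su_{\x_2}^+(r_2)}{\x_2(r_2)}=\dem(G_1,r_1)+\dem(G_2,r_2),
\end{equation}
as claimed. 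I do not expect a genuine obstacle here: the construction is a direct adaptation of Theorem~\ref{the:coal}, and the only points needing a little care are (a) the normalization $\x_1(r_1)=\x_2(r_2)$, which is permitted because $\x_i$ is unique only up to scaling and $\x_i(r_i)\neq0$, and (b) making sure the ``deleted row'' in $A(C)_{(\rho)}$ is precisely the row indexed by the merged vertex, so that the local conditions at all other vertices of $C$ really do split cleanly between $G_1$ and $G_2$.
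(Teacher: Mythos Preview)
Your proof is correct and follows essentially the same approach as the paper: normalize the two gadget vectors to agree at the root, glue them into a single full vector on the coalescence, verify one-dimensionality of both $\ker A(C)_{(\rho)}$ and $\ker A(C^R)_{(\rho)}$ by restricting, and compute the demand as a sum. Your uniqueness argument is in fact slightly more explicit than the paper's, since you spell out why the two scalars $\lambda$ and $\mu$ must agree (via $\lambda c=\mu c$ with $c\neq 0$), a step the paper leaves implicit.
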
 

\begin{proof}
Let $H \coloneqq (G_1, r_1) \odot (G_2, r_2)$.
Let $\x^{(1)} \in \ker A(G_1)_{(r_1)} \cap \ker A(G_1^R)_{(r_1)}$ and $\x^{(2)} \in \ker A(G_2)_{(r_2)} \cap \ker A(G_2^R)_{(r_2)}$, which exist by Definition~\ref{def:theMagicVector}. 
We can assume that $\x^{(1)}(r_1) = \x^{(2)}(r_2) \neq 0$. Define $\y \colon V(H) \to \mathbb{R}$ by
$\y(u) = \x^{(1)}(u)$ if $u \in V(G_1)$ and $\y(u) = \x^{(2)}(u)$ if $u \in V(G_2)$. Vector $\y$ is full and $\y \in \ker A(H)_{(r_1)} \cap \ker A(H^R)_{(r_1)}$. 

Let $\z \in \ker A(H)_{(r_1)}$. The restriction of $\z$ to $G_1$ (resp.\ $G_2$) is in $\ker A(G_1)_{(r_1)}$ (resp.\ $\ker A(G_2)_{(r_2)}$), 
which is $1$-dimensional by Definition~\ref{def:theMagicVector},
hence the restriction of $\z$ to $G_1$ (resp.\ $G_2$) is a multiple of $\x^{(1)}$ (resp.\ $\x^{(2)}$).
This implies that $\z$ is a multiple of $\y$ and hence, $\ker A(H)_{(r_1)}$ is $1$-dimensional. By similar reasoning, we can show that  $\ker A(H^R)_{(r_1)}$ is also $1$-dimensional.

Moreover, 
\begin{equation}
\begin{aligned}
\dem(H, r_1) & = - \frac{\sum_{u \in H^+(r_1)} \y(u)}{\y(r_1)} \\
 & = - \frac{\sum_{u \in G_1^+(r_1)}\y(u)}{\y(r_1)} - \frac{\sum_{u \in G_2^+(r_2)} \y(u)}{\y(r_2)} = \dem(G_1, r_1) + \dem(G_2, r_2). 
\end{aligned} 
\end{equation}
\end{proof}

A gadget that can be obtained by (possibly repeated) use of Lemma~\ref{lem:compositeLemma} will be called a \emph{composite} gadget. Note that this allows
gadgets with integer demand to be obtained by coalescing gadgets of fractional demand; the gadget in Figure~\ref{fig:compositeGadgetAmbiNutExample}(a) is an example.
 Note that if we combine Lemma~\ref{lem:compositeLemma} with Proposition~\ref{prop:gadgetsAreAmbi}, we have yet another avenue for obtaining large ambi-nut
 digraphs. If we coalesce, at their common root, a set of gadgets for which the demands add to $0$, the result is an ambi-nut digraph; the ambi-nut digraph in
 Figure~\ref{fig:compositeGadgetAmbiNutExample}(b) is an example.
 
\begin{figure}[!htbp]
\centering
\subcaptionbox{}{
\begin{tikzpicture}[scale=1.1]
\tikzstyle{vertex}=[draw,circle,font=\scriptsize,minimum size=5pt,inner sep=1pt,fill=green!80!white]
\tikzstyle{uertex}=[draw,circle,font=\scriptsize,minimum size=5pt,inner sep=1pt,fill=cyan!80!white]
\tikzstyle{edge}=[draw,thick,decoration={markings,mark=at position 0.55 with {\arrow{Stealth}}},postaction=decorate]
\tikzstyle{sedge}=[draw,thick,decoration={markings,mark=at position 0.35 with {\arrow{Stealth}}},postaction=decorate]
\tikzstyle{ssedge}=[draw,thick,decoration={markings,mark=at position 0.25 with {\arrow{Stealth}}},postaction=decorate]
\tikzstyle{ledge}=[draw,thick,decoration={markings,mark=at position 0.85 with {\arrow{Stealth}}},postaction=decorate]
\tikzstyle{lledge}=[draw,thick,decoration={markings,mark=at position 0.90 with {\arrow{Stealth}}},postaction=decorate]
\node[vertex,label={90:$2$},fill=red!80!white] (v3) at (0, 0) {};
\node[vertex,label={0:$-1$}] (v2) at (1, 0) {};
\node[vertex,label={0:$2$}] (v0) at (2, 0) {};
\node[vertex,label={0:$-2$}] (v1) at (3, 0) {};
\node[vertex,label={90:$1$}] (v7) at (1.5, 1) {};
\node[vertex,label={[yshift=-0.8pt]90:$-1$}] (v5) at (2.5, 1) {};
\node[vertex,label={-90:$1$}] (v6) at (1.5, -1) {};
\node[vertex,label={-90:$-1$}] (v4) at (2.5, -1) {};
\node[uertex,label={90:$1$}] (u6) at (-1, 0.5) {};
\node[uertex,label={-90:$1$}] (u7) at (-1, -0.5) {};
\node[uertex,label={90:$1$}] (u1) at (-2, 1) {};
\node[uertex,label={[yshift=-1pt,xshift=-2pt]90:$-1$}] (u0) at (-2, -0.1) {};
\node[uertex,label={-90:$-1$}] (u2) at (-2, -1) {};
\node[uertex,label={180:$-1$}] (u4) at (-3, 0.5) {};
\node[uertex,label={180:$-1$}] (u5) at (-3, -0.5) {};
\path[edge] (v7) -- (v5) {};
\path[edge] (v4) -- (v6) {};
\path[ledge] (v4) -- (v2) {};
\path[edge] (v7) -- (v2) {};
\path[sedge] (v2) -- (v5) {};
\path[ledge] (v7) -- (v0) {};
\path[ledge] (v5) -- (v0) {};
\path[edge] (v5) -- (v1) {};
\path[ledge] (v6) -- (v1) {};
\path[sedge] (v1) -- (v7) {};
\path[sedge] (v0) -- (v4) {};
\path[edge] (v1) -- (v4) {};
\path[edge] (v2) -- (v6) {};
\path[sedge] (v0) -- (v6) {};
\path[edge] (v3) -- (v7) {};
\path[edge] (v6) -- (v3) {};
\path[edge] (u6) -- (u1) {};
\path[edge] (u1) -- (u4) {};
\path[edge] (u4) -- (u6) {};
\path[edge] (u7) -- (u5) {};
\path[edge] (u2) -- (u5) {};
\path[edge] (u2) -- (u7) {};
\path[sedge] (u0) -- (u4) {};
\path[sedge] (u0) -- (u6) {};
\path[ledge] (u5) -- (u0) {};
\path[ledge] (u7) -- (u0) {};
\path[ssedge] (u5) -- (u1) {};
\path[lledge] (u1) -- (u7) {};
\path[ssedge] (u6) -- (u2) {};
\path[ssedge] (u4) -- (u2) {};
\path[edge] (u7) -- (v3) {};
\path[edge] (v3) -- (u6) {};
\end{tikzpicture}
}
\subcaptionbox{}{
\begin{tikzpicture}[scale=1.1]
\tikzstyle{vertex}=[draw,circle,font=\scriptsize,minimum size=5pt,inner sep=1pt,fill=green!80!white]
\tikzstyle{uertex}=[draw,circle,font=\scriptsize,minimum size=5pt,inner sep=1pt,fill=orange!80!white]
\tikzstyle{edge}=[draw,thick,decoration={markings,mark=at position 0.55 with {\arrow{Stealth}}},postaction=decorate]
\tikzstyle{sedge}=[draw,thick,decoration={markings,mark=at position 0.35 with {\arrow{Stealth}}},postaction=decorate]
\tikzstyle{ssedge}=[draw,thick,decoration={markings,mark=at position 0.25 with {\arrow{Stealth}}},postaction=decorate]
\tikzstyle{ledge}=[draw,thick,decoration={markings,mark=at position 0.85 with {\arrow{Stealth}}},postaction=decorate]
\tikzstyle{lledge}=[draw,thick,decoration={markings,mark=at position 0.90 with {\arrow{Stealth}}},postaction=decorate]
\node[vertex,label={90:$2$},fill=red!80!white] (v3) at (0, 0) {};
\node[vertex,label={0:$-1$}] (v2) at (1, 0) {};
\node[vertex,label={0:$2$}] (v0) at (2, 0) {};
\node[vertex,label={0:$-2$}] (v1) at (3, 0) {};
\node[vertex,label={90:$1$}] (v7) at (1.5, 1) {};
\node[vertex,label={[yshift=-0.8pt]90:$-1$}] (v5) at (2.5, 1) {};
\node[vertex,label={-90:$1$}] (v6) at (1.5, -1) {};
\node[vertex,label={-90:$-1$}] (v4) at (2.5, -1) {};
\node[uertex,label={90:$-1$}] (u6) at (-1, 0.5) {};
\node[uertex,label={-90:$-1$}] (u7) at (-1, -0.5) {};
\node[uertex,label={[yshift=-2pt]90:$1$}] (u1) at (-2, 0) {};
\node[uertex,label={-90:$-1$}] (u0) at (-2, -1) {};
\node[uertex,label={90:$-1$}] (u2) at (-2, 1) {};
\node[uertex,label={180:$1$}] (u4) at (-3, 0.5) {};
\node[uertex,label={180:$1$}] (u5) at (-3, -0.5) {};
\path[edge] (v7) -- (v5) {};
\path[edge] (v4) -- (v6) {};
\path[ledge] (v4) -- (v2) {};
\path[edge] (v7) -- (v2) {};
\path[sedge] (v2) -- (v5) {};
\path[ledge] (v7) -- (v0) {};
\path[ledge] (v5) -- (v0) {};
\path[edge] (v5) -- (v1) {};
\path[ledge] (v6) -- (v1) {};
\path[sedge] (v1) -- (v7) {};
\path[sedge] (v0) -- (v4) {};
\path[edge] (v1) -- (v4) {};
\path[edge] (v2) -- (v6) {};
\path[sedge] (v0) -- (v6) {};
\path[edge] (v3) -- (v7) {};
\path[edge] (v6) -- (v3) {};
\path[edge] (u6) -- (u7) {};
\path[edge] (u4) -- (u5) {};
\path[edge] (u4) -- (u2) {};
\path[ledge] (u7) -- (u2) {};
\path[sedge] (u2) -- (u5) {};
\path[edge] (u2) -- (u6) {};
\path[sedge] (u1) -- (u7) {};
\path[sedge] (u1) -- (u4) {};
\path[ledge] (u6) -- (u1) {};
\path[ledge] (u5) -- (u1) {};
\path[edge] (u7) -- (u0) {};
\path[edge] (u5) -- (u0) {};
\path[sedge] (u0) -- (u4) {};
\path[sedge] (u0) -- (u6) {};
\path[edge] (u7) -- (v3) {};
\path[edge] (v3) -- (u6) {};
\end{tikzpicture}
}
\caption{(a): A composite gadget of demand $-1$ obtained from two non-isomorphic gadgets with demands $-1/2$.  (b): A composite gadget of demand 0, which is an ambi-nut digraph, obtained from two non-isomorphic gadgets with respective demands $1/2$ and $-1/2$.}
\label{fig:compositeGadgetAmbiNutExample}
\end{figure}
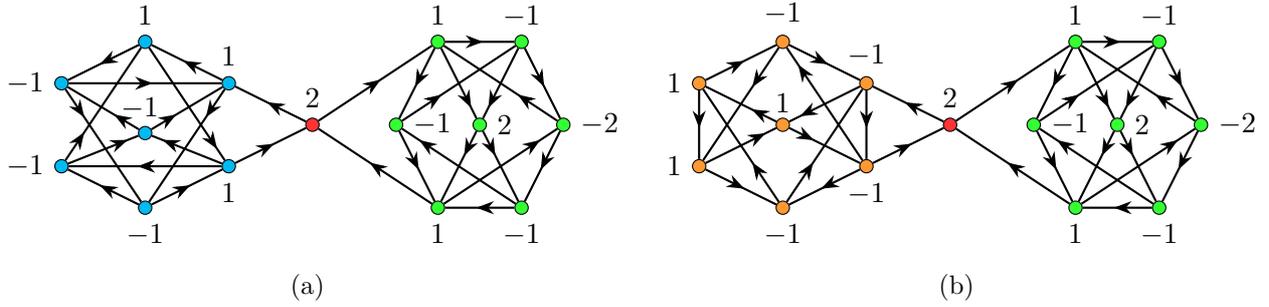
 
We now give a theorem that can be used to produce ambi-nut digraphs by attaching gadgets to a \emph{base digraph}.
 
\begin{theorem}
\label{thm:theGrandMultiplierTheorem}
Let $G$ be the base digraph, i.e.\ a digraph of order $n$ with $V(G) = \{v_1, v_2, \ldots, v_n\}$ and an integer eigenvalue $\lambda$ such that the $\lambda$-eigenspaces of $G$ and
 $G^R$ are both $1$-dimensional and spanned by the same full vector~$\x$.
Let $(\Gamma_1, r_1), (\Gamma_2, r_2), \ldots, (\Gamma_n, r_n)$ be gadgets, such that $\dem (\Gamma_i, r_i) = \lambda$ for all $i = 1, \ldots, n$.
Then the digraph $M$ obtained from the disjoint union $G \sqcup \Gamma_1 \sqcup \Gamma_2 \sqcup \cdots \sqcup \Gamma_n$ by identifying $v_i$ with $r_i$ for all $i = 1,\ldots, n$
 is ambi-nut.
\end{theorem}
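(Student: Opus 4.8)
The plan is to follow the template of the coalescence proof (Theorem~\ref{the:coal}), with the gadget vectors of Definition~\ref{def:theMagicVector} and the demand bookkeeping of Proposition~\ref{prop:inDemandOutDemand} playing the role that the kernel/co-kernel vectors played there. For each $i$, let $\x^{(i)}$ be the full vector attached to the gadget $(\Gamma_i, r_i)$; since $\x^{(i)}$ is unique up to scaling and $\x(v_i)\neq 0$, rescale so that $\x^{(i)}(r_i)=\x(v_i)$. Viewing $V(G)$ and each $V(\Gamma_i)$ as subsets of $V(M)$ (with $v_i$ and $r_i$ naming the same merged vertex), define $\z\colon V(M)\to\RR$ to coincide with $\x$ on $V(G)$ and with $\x^{(i)}$ on $V(\Gamma_i)$; these agree on the merged vertex by the choice of normalisation, so $\z$ is well defined, and it is full since $\x$ and all the $\x^{(i)}$ are.

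Next I would check $\z\in\Ker M$. For $u\in V(\Gamma_i)\setminus\{r_i\}$ one has $M^+(u)=\Gamma_i^+(u)$, so $\Su_\z^+(u)=\Su_{\x^{(i)}}^+(u)=0$ because $\x^{(i)}\in\ker A(\Gamma_i)_{(r_i)}$. At the merged vertex $w$ arising from $v_i=r_i$, one has $M^+(w)=G^+(v_i)\cup\Gamma_i^+(r_i)$, so
\begin{equation*}
\Su_\z^+(w)=\sum_{x\in G^+(v_i)}\x(x)+\sum_{x\in\Gamma_i^+(r_i)}\x^{(i)}(x)=\lambda\,\x(v_i)-\dem(\Gamma_i,r_i)\,\x^{(i)}(r_i)=\lambda\,\x(v_i)-\lambda\,\x(v_i)=0,
\end{equation*}
where the first summand equals $\lambda\x(v_i)$ since $\x$ is a $\lambda$-eigenvector of $G$, and the second equals $-\dem(\Gamma_i,r_i)\,\x^{(i)}(r_i)$ by the definition of demand. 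Hence $\z\in\Ker M$. Running the identical computation on $M^R$ gives $\z\in\CoKer M$: $\x$ spans the one-dimensional $\lambda$-eigenspace of $G^R$ by hypothesis, each $(\Gamma_i^R,r_i)$ is again a gadget with the same defining vector $\x^{(i)}$, and $\dem(\Gamma_i^R,r_i)=-\Su_{\x^{(i)}}^-(r_i)/\x^{(i)}(r_i)=\dem(\Gamma_i,r_i)=\lambda$ by Proposition~\ref{prop:inDemandOutDemand}.

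Finally I would show $\dim\Ker M=1$, which is the step I expect to carry the real content. Let $\w\in\Ker M$. Since $M^+(u)=\Gamma_i^+(u)$ for every $u\in V(\Gamma_i)\setminus\{r_i\}$, the restriction $\w|_{V(\Gamma_i)}$ satisfies the out-local condition everywhere on $\Gamma_i$ except possibly at $r_i$, i.e.\ $\w|_{V(\Gamma_i)}\in\ker A(\Gamma_i)_{(r_i)}$; this kernel is one-dimensional and spanned by $\x^{(i)}$, so $\w|_{V(\Gamma_i)}=c_i\,\x^{(i)}$ for some scalar $c_i$, and in particular $\w(v_i)=c_i\,\x(v_i)$. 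Evaluating the out-local condition for $\w$ at the merged vertex $v_i=r_i$ and using $\Su_{\x^{(i)}}^+(r_i)=-\lambda\,\x^{(i)}(r_i)$ gives, for $1\le i\le n$,
\begin{equation*}
\sum_{v_j\in G^+(v_i)}c_j\,\x(v_j)=\lambda\,c_i\,\x(v_i).
\end{equation*}
Thus the vector of $V(G)$ whose $i$-th entry is $c_i\,\x(v_i)$ lies in the $\lambda$-eigenspace of $A(G)$, which is spanned by $\x$; since $\x$ is full this forces $c_1=\cdots=c_n$, so $\w$ is a scalar multiple of $\z$ and $\Ker M=\langle\z\rangle$. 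Applying the same argument to $M^R$ yields $\CoKer M=\langle\z\rangle$ as well, and as $\z$ is full, $M$ is ambi-nut. The delicate point is precisely the coincidence of the per-gadget coefficients $c_i$: a priori they could differ, and it is exactly the assumed one-dimensionality of the base $\lambda$-eigenspace together with the fullness of $\x$ that glues them together. (Integrality of $\lambda$ is not actually used here; it holds automatically, since gadget demands are rational and $\lambda$ is a rational eigenvalue of $G$, hence an integer by Lemma~\ref{lem:rationalRootsDigraphs}.)
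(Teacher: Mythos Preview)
Your proof is correct and follows essentially the same approach as the paper's: construct the full vector by patching together the rescaled gadget vectors, verify it lies in both $\Ker M$ and $\CoKer M$ via the demand bookkeeping at the merged vertices, and then prove one-dimensionality by showing that any kernel vector restricts to a multiple of $\x^{(i)}$ on each gadget and that the local condition at the root forces these multiples to assemble into a $\lambda$-eigenvector of $G$, hence a scalar multiple of $\x$. Your explicit isolation of the step ``$\x$ full forces $c_1=\cdots=c_n$'' is a welcome clarification of what the paper leaves implicit, and your remark on the (non-)role of integrality is accurate.
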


\begin{proof}
For each $i$, let $\x^{(i)}$ be the vector that spans the kernel of $A(\Gamma_i)_{(r_i)}$. 
We can assume that $\x^{(i)}(r_i) = \x(v_i)$. Let $\y \colon V(M) \to \mathbb{R}$, such that for each $i$ and for each $v \in V(\Gamma_i)$,  $\y(v) = \x^{(i)}(v)$.
From Definition~\ref{def:theMagicVector} it follows that every vertex $v \in V(M) \setminus V(G)$ satisfies $\Su_{\y}^{+}(v) = \Su_{\y}^{-}(v) = 0$. Moreover,
$\Su_{\y}^{+}(v_i) = \sum_{u \in M^+(v_i)} \y(u) = \sum_{u \in G^+(v_i)} \y(u) +  \sum_{u \in \Gamma_i^+(v_i)} \y(u) = \lambda \x(v_i) - \dem(\Gamma_i, r_i) \x(v_i) = 0$.
Similarly, $\Su_{\y}^{-}(v_i) = 0$. It follows that $\y \in \ker M$ and $\y \in \CoKer M$. Since $\x$ and all $\x^{(i)}$ are full, $\y$ is also full.

Let $\z \in \ker M$. The vector $\z$ restricted to $\Gamma_i$ is in $\ker A(\Gamma_i)_{(r_i)}$ and therefore it is a multiple of $\x^{(i)}$. The
local condition at vertex $r_i$ is
\begin{equation}
\label{eq:localGadgetBaseGraphBurek}
\sum_{u \in M^+(r_i)} \z(u) = \sum_{u \in G^+(r_i)} \z(u) + \sum_{u \in \Gamma_i^+(r_i)} \z(u) = 0.
\end{equation}
Recall that $\sum_{u \in \Gamma_i^+(r_i)} \z(u) = -\z(r_i) \dem(\Gamma_i, r_i)$. Equation \eqref{eq:localGadgetBaseGraphBurek} yields
\begin{equation}
\sum_{u \in G^+(r_i)} \z(u) -\z(r_i) \dem(\Gamma_i, r_i) = 0.
\end{equation}
Since $\dem(\Gamma_i, r_i) = \lambda$, we further obtain
\begin{equation}
\sum_{u \in G^+(r_i)} \z(u)  = \lambda \z(r_i) .
\end{equation}
This implies that $\z$ restricted to $G$ is an eigenvector for $\lambda$, so it is a multiple of $\x$. In summary, $\z$ is a multiple of $\y$. We conclude that
$\ker M$ is $1$-dimensional. By similar reasoning, we can see that $\CoKer M$ is also $1$-dimensional.
\end{proof}

Using Theorem~\ref{thm:maxineyGadgets}, Lemma~\ref{lem:compositeLemma} and the examples in Figure~\ref{fig:bunchOfGadgets}, 
it is easy to produce 
an abundance of gadgets with integer demands. 
Following the definition used in \cite{Tuite2019},
a digraph $G$ is \emph{diregular of degree $k$} (or \emph{$k$-diregular}) if $d^+(v) = d^-(v) = k$ for $v \in V(G)$.
(Some authors, e.g.~\cite{Brualdi2010}, use the term $k$-regular digraph to mean what we are calling a $k$-diregular digraph.
However, in the present paper, we have already used  \emph{$k$-regular digraph} to denote a digraph whose underlying
graph is $k$-regular.)
Further examples of gadgets can be found in \cite{GitHubSupplement}. 
For the base digraph  
$G$ in Theorem~\ref{thm:theGrandMultiplierTheorem}, there exist whole classes of examples:
\begin{enumerate}[label=(\roman*)]
\item
Every connected $k$-diregular digraph is a base digraph for $\lambda = k$.
\item
Every connected bipartite $k$-diregular digraph is a base digraph for $\lambda = -k$ (in addition to $\lambda = k$).
\item
Figure~\ref{ref:baseGraphsDifferentEvals} shows examples of base digraphs for $\lambda \in \{-1, 1, 2\}$ that are not diregular.
More can be found in~\cite{GitHubSupplement}.
\end{enumerate}

\begin{figure}[!htbp]
\centering
\subcaptionbox{$\lambda = -1$}{
\begin{tikzpicture}[scale=1.5]
\tikzstyle{vertex}=[draw,circle,font=\scriptsize,minimum size=5pt,inner sep=1pt,fill=green!80!white]
\tikzstyle{edge}=[draw,thick,decoration={markings,mark=at position 0.6 with {\arrow{Stealth}}},postaction=decorate]
\tikzstyle{ledge}=[draw,thick,decoration={markings,mark=at position 0.8 with {\arrow{Stealth}}},postaction=decorate]
\node[vertex,label=90:$1$] (v0) at (1, 1) {};
\node[vertex,label=-90:$1$] (v2) at (1, 0) {};
\node[vertex,label=90:$-1$] (v5) at (0, 1) {};
\node[vertex,label=-90:$-1$] (v4) at (0, 0) {};
\node[vertex,label=90:$1$] (v1) at ($ (0, 1) + (-150:1) $) {};
\node[vertex,label=90:$-1$] (v3) at ($ (1, 0) + (30:1) $) {};
\path[edge] (v0) -- (v3) {};
\path[edge] (v0) -- (v2) {};
\path[edge] (v3) -- (v2) {};
\path[edge] (v5) -- (v4) {};
\path[edge] (v5) -- (v1) {};
\path[edge] (v1) -- (v4) {};
\path[ledge] (v0) -- (v4) {};
\path[ledge] (v2) -- (v5) {};
\path[edge] (v5) -- (v0) {};
\path[edge] (v4) -- (v2) {};
\end{tikzpicture}
}
\subcaptionbox{$\lambda = -1$}{
\begin{tikzpicture}[scale=1.5]
\tikzstyle{vertex}=[draw,circle,font=\scriptsize,minimum size=5pt,inner sep=1pt,fill=green!80!white]
\tikzstyle{edge}=[draw,thick,decoration={markings,mark=at position 0.6 with {\arrow{Stealth}}},postaction=decorate]
\tikzstyle{ledge}=[draw,thick,decoration={markings,mark=at position 0.8 with {\arrow{Stealth}}},postaction=decorate]
\node[vertex,label=90:$-1$] (v5) at (36:1) {};
\node[vertex,label=180:$1$] (v0) at ({36 + 1 * 72}:1) {};
\node[vertex,label={[xshift=-4pt]90:$-1$}] (v4) at ({36 + 2 * 72}:1) {};
\node[vertex,label=180:$1$] (v1) at ({36 + 3 * 72}:1) {};
\node[vertex,label=-90:$-1$] (v6) at ({36 + 4 * 72}:1) {};
\node[vertex,label=90:$1$] (v3) at ($ (v5) + (-30:1) $) {};
\node[vertex,label=90:$1$] (v2) at ($ (v5) + (210:1) $) {};
\path[edge] (v5) -- (v0) {};
\path[edge] (v0) -- (v4) {};
\path[edge] (v4) -- (v1) {};
\path[edge] (v1) -- (v6) {};
\path[edge] (v6) -- (v5) {};
\path[edge] (v3) -- (v5) {};
\path[edge] (v6) -- (v3) {};
\path[edge] (v6) -- (v2) {};
\path[edge] (v2) -- (v5) {};
\end{tikzpicture}
}
\subcaptionbox{$\lambda = -1$}{
\begin{tikzpicture}[scale=0.8]
\tikzstyle{vertex}=[draw,circle,font=\scriptsize,minimum size=5pt,inner sep=1pt,fill=green!80!white]
\tikzstyle{edge}=[draw,thick,decoration={markings,mark=at position 0.6 with {\arrow{Stealth}}},postaction=decorate]
\tikzstyle{ledge}=[draw,thick,decoration={markings,mark=at position 0.8 with {\arrow{Stealth}}},postaction=decorate]
\node[vertex,label=180:$2$] (v0) at (-5, 4.8) {};
\node[vertex,label=0:$-1$] (v3) at (-0.5, 4.8) {};
\node[vertex,label=180:$-2$] (v4) at (-5, 0.6) {};
\node[vertex,label=0:$1$] (v5) at (-0.5, 0.6) {};
\node[vertex,label={[xshift=-2pt,yshift=4pt]0:$1$}] (v1) at ({-2.75-0.7}, 2.0) {};
\node[vertex,label={[xshift=2pt,yshift=4pt]180:$1$}] (v2) at ({-2.75+0.7}, 2.0) {};
\node[vertex,label={[yshift=-3pt]90:$-1$}] (v6) at (-2.75, 4.0) {};
\path[edge] (v0) -- (v3) {};
\path[edge] (v0) -- (v6) {};
\path[edge] (v1) -- (v4) {};
\path[edge] (v1) -- (v5) {};
\path[edge] (v2) -- (v4) {};
\path[edge] (v2) -- (v5) {};
\path[edge] (v3) -- (v5) {};
\path[edge] (v4) -- (v0) {};
\path[edge] (v4) -- (v5) {};
\path[edge] (v4) -- (v6) {};
\path[edge] (v5) -- (v6) {};
\path[edge] (v6) -- (v1) {};
\path[edge] (v6) -- (v2) {};
\path[edge] (v6) -- (v3) {};
\end{tikzpicture}
}

\subcaptionbox{$\lambda = -1$}{
\begin{tikzpicture}[scale=1.5]
\tikzstyle{vertex}=[draw,circle,font=\scriptsize,minimum size=5pt,inner sep=1pt,fill=green!80!white]
\tikzstyle{edge}=[draw,thick,decoration={markings,mark=at position 0.6 with {\arrow{Stealth}}},postaction=decorate]
\tikzstyle{ledge}=[draw,thick,decoration={markings,mark=at position 0.8 with {\arrow{Stealth}}},postaction=decorate]
\node[vertex,label=90:$1$] (v0) at (1, 1) {};
\node[vertex,label=-90:$1$] (v2) at (1, 0) {};
\node[vertex,label=90:$-1$] (v5) at (0, 1) {};
\node[vertex,label=-90:$-1$] (v4) at (0, 0) {};
\node[vertex,label=90:$1$] (v1) at ($ (0, 1) + (-150:1) $) {};
\node[vertex,label=90:$-1$] (v3) at ($ (1, 0) + (30:1) $) {};
\path[edge] (v0) -- (v3) {};
\path[edge] (v0) -- (v2) {};
\path[edge] (v3) -- (v2) {};
\path[edge] (v5) -- (v4) {};
\path[edge] (v1) -- (v5) {};
\path[edge] (v4) -- (v1) {};
\path[ledge] (v0) -- (v4) {};
\path[ledge] (v5) -- (v2) {};
\path[edge] (v5) -- (v0) {};
\path[edge] (v2) -- (v4) {};
\end{tikzpicture}
}
\subcaptionbox{$\lambda = -1$}{
\begin{tikzpicture}[scale=1.5]
\tikzstyle{vertex}=[draw,circle,font=\scriptsize,minimum size=5pt,inner sep=1pt,fill=green!80!white]
\tikzstyle{edge}=[draw,thick,decoration={markings,mark=at position 0.6 with {\arrow{Stealth}}},postaction=decorate]
\tikzstyle{ledge}=[draw,thick,decoration={markings,mark=at position 0.8 with {\arrow{Stealth}}},postaction=decorate]
\node[vertex,label=90:$-1$] (v5) at (36:1) {};
\node[vertex,label=180:$1$] (v0) at ({36 + 1 * 72}:1) {};
\node[vertex,label={[xshift=-4pt]90:$-1$}] (v4) at ({36 + 2 * 72}:1) {};
\node[vertex,label=180:$1$] (v1) at ({36 + 3 * 72}:1) {};
\node[vertex,label=-90:$-1$] (v6) at ({36 + 4 * 72}:1) {};
\node[vertex,label=90:$1$] (v3) at ($ (v5) + (-30:1) $) {};
\node[vertex,label=90:$1$] (v2) at ($ (v5) + (210:1) $) {};
\path[edge] (v5) -- (v0) {};
\path[edge] (v0) -- (v4) {};
\path[edge] (v4) -- (v1) {};
\path[edge] (v1) -- (v6) {};
\path[edge] (v5) -- (v6) {};
\path[edge] (v5) -- (v3) {};
\path[edge] (v3) -- (v6) {};
\path[edge] (v6) -- (v2) {};
\path[edge] (v2) -- (v5) {};
\end{tikzpicture}
}
\subcaptionbox{$\lambda = -1$}{
\begin{tikzpicture}[scale=0.8]
\tikzstyle{vertex}=[draw,circle,font=\scriptsize,minimum size=5pt,inner sep=1pt,fill=green!80!white]
\tikzstyle{edge}=[draw,thick,decoration={markings,mark=at position 0.6 with {\arrow{Stealth}}},postaction=decorate]
\tikzstyle{ledge}=[draw,thick,decoration={markings,mark=at position 0.8 with {\arrow{Stealth}}},postaction=decorate]
\node[vertex,label=180:$-1$] (v0) at (-5, 4.8) {};
\node[vertex,label=0:$2$] (v3) at (-0.5, 4.8) {};
\node[vertex,label=180:$1$] (v4) at (-5, 0.6) {};
\node[vertex,label=0:$-2$] (v5) at (-0.5, 0.6) {};
\node[vertex,label={[xshift=-2pt,yshift=4pt]0:$1$}] (v1) at ({-2.75-0.7}, 2.0) {};
\node[vertex,label={[xshift=2pt,yshift=4pt]180:$1$}] (v2) at ({-2.75+0.7}, 2.0) {};
\node[vertex,label={[yshift=-3pt]90:$-1$}] (v6) at (-2.75, 4.0) {};
\path[edge] (v0) -- (v3) {};
\path[edge] (v0) -- (v6) {};
\path[edge] (v1) -- (v6) {};
\path[edge] (v2) -- (v6) {};
\path[edge] (v3) -- (v5) {};
\path[edge] (v4) -- (v0) {};
\path[edge] (v4) -- (v5) {};
\path[edge] (v4) -- (v1) {};
\path[edge] (v4) -- (v2) {};
\path[edge] (v5) -- (v1) {};
\path[edge] (v5) -- (v2) {};
\path[edge] (v6) -- (v4) {};
\path[edge] (v6) -- (v5) {};
\path[edge] (v6) -- (v3) {};
\end{tikzpicture}
}

\subcaptionbox{$\lambda = 1$}{
\begin{tikzpicture}[scale=1.2]
\tikzstyle{vertex}=[draw,circle,font=\scriptsize,minimum size=5pt,inner sep=1pt,fill=green!80!white]
\tikzstyle{edge}=[draw,thick,decoration={markings,mark=at position 0.55 with {\arrow{Stealth}}},postaction=decorate]
\node[vertex,label={[yshift=-2pt]90:$1$}] (v6) at (0, -0.1) {};
\node[vertex,label={[yshift=2pt]-90:$1$}] (v4) at (0, -0.9) {};
\node[vertex,label=180:$1$] (v0) at (-1, 0.5) {};
\node[vertex,label=180:$1$] (v2) at (-1, -1.5) {};
\node[vertex,label={[xshift=-4pt,yshift=-2pt]90:$-1$}] (v3) at (1, -0.1) {};
\node[vertex,label={[xshift=-4pt,yshift=2pt]-90:$-1$}] (v1) at (1, -0.9) {};
\node[vertex,label=0:$-1$] (v5) at (2, 0.5) {};
\node[vertex,label=0:$-1$] (v7) at (2, -1.5) {};
\path[edge] (v0) -- (v4) {};
\path[edge] (v2) -- (v6) {};
\path[edge] (v0) -- (v2) {};
\path[edge] (v2) -- (v4) {};
\path[edge] (v4) -- (v6) {};
\path[edge] (v6) -- (v0) {};
\path[edge] (v3) -- (v7) {};
\path[edge] (v1) -- (v5) {};
\path[edge] (v3) -- (v5) {};
\path[edge] (v5) -- (v7) {};
\path[edge] (v7) -- (v1) {};
\path[edge] (v1) -- (v3) {};
\path[edge] (v3) -- (v6) {};
\path[edge] (v0) -- (v5) {};
\path[edge] (v2) -- (v7) {};
\path[edge] (v1) -- (v4) {};
\end{tikzpicture}
}
\subcaptionbox{$\lambda = 1$}{
\begin{tikzpicture}[scale=1.2]
\tikzstyle{vertex}=[draw,circle,font=\scriptsize,minimum size=5pt,inner sep=1pt,fill=green!80!white]
\tikzstyle{edge}=[draw,thick,decoration={markings,mark=at position 0.55 with {\arrow{Stealth}}},postaction=decorate]
\tikzstyle{ledge}=[draw,thick,decoration={markings,mark=at position 0.80 with {\arrow{Stealth}}},postaction=decorate]
\node[vertex,label=90:$1$] (v1) at (0, 0) {};
\node[vertex,label=-90:$1$] (v3) at (0, -1) {};
\node[vertex,label=90:$1$] (v7) at (1, 0) {};
\node[vertex,label=-90:$1$] (v5) at (1, -1) {};
\node[vertex,label={[xshift=-2pt]-90:$-1$}] (v6) at (-1, 0) {};
\node[vertex,label=-90:$-1$] (v0) at (2, 0) {};
\node[vertex,label=0:$-2$] (v4) at (0.5, 1.6) {};
\node[vertex,label=-90:$-2$] (v2) at (0.5, 1.0) {};
\path[edge] (v4) -- (v6) {};
\path[edge] (v6) -- (v2) {};
\path[edge] (v4) -- (v0) {};
\path[edge] (v0) -- (v2) {};
\path[ledge] (v2) -- (v4) {};
\path[edge] (v1) -- (v6) {};
\path[ledge] (v6) -- (v3) {};
\path[edge] (v7) -- (v0) {};
\path[ledge] (v0) -- (v5) {};
\path[edge] (v1) -- (v3) {};
\path[edge] (v3) -- (v5) {};
\path[edge] (v5) -- (v7) {};
\path[edge] (v7) -- (v1) {};
\path[ledge] (v7) -- (v3) {};
\path[ledge] (v1) -- (v5) {};
\end{tikzpicture}
}
\subcaptionbox{$\lambda = 1$}{
\begin{tikzpicture}[scale=1.2]
\tikzstyle{vertex}=[draw,circle,font=\scriptsize,minimum size=5pt,inner sep=1pt,fill=green!80!white]
\tikzstyle{edge}=[draw,thick,decoration={markings,mark=at position 0.55 with {\arrow{Stealth}}},postaction=decorate]
\tikzstyle{ledge}=[draw,thick,decoration={markings,mark=at position 0.8 with {\arrow{Stealth}}},postaction=decorate]
\node[vertex,label=90:$-2$] (v5) at (0, 0) {};
\node[vertex,label=-90:$-2$] (v4) at (0, -1) {};
\node[vertex,label=90:$-1$] (v1) at (1, 0) {};
\node[vertex,label=-90:$-1$] (v2) at (1, -1) {};
\node[vertex,label=90:$1$] (v6) at (2, 0) {};
\node[vertex,label=-90:$1$] (v7) at (2, -1) {};
\node[vertex,label=90:$2$] (v3) at (3, 0) {};
\node[vertex,label=-90:$2$] (v0) at (3, -1) {};
\path[edge] (v4) -- (v5) {};
\path[edge] (v5) -- (v1) {};
\path[edge] (v1) -- (v6) {};
\path[edge] (v3) -- (v6) {};
\path[edge] (v0) -- (v3) {};
\path[edge] (v7) -- (v0) {};
\path[edge] (v2) -- (v7) {};
\path[edge] (v2) -- (v4) {};
\path[ledge] (v5) -- (v2) {};
\path[ledge] (v1) -- (v4) {};
\path[ledge] (v6) -- (v2) {};
\path[ledge] (v7) -- (v1) {};
\path[ledge] (v6) -- (v0) {};
\path[ledge] (v3) -- (v7) {};
\end{tikzpicture}
}

\subcaptionbox{$\lambda = 1$}{
\begin{tikzpicture}[scale=1.2]
\tikzstyle{vertex}=[draw,circle,font=\scriptsize,minimum size=5pt,inner sep=1pt,fill=green!80!white]
\tikzstyle{edge}=[draw,thick,decoration={markings,mark=at position 0.55 with {\arrow{Stealth}}},postaction=decorate]
\tikzstyle{ledge}=[draw,thick,decoration={markings,mark=at position 0.8 with {\arrow{Stealth}}},postaction=decorate]
\node[vertex,label=90:$2$] (v5) at (0, 0) {};
\node[vertex,label=-90:$2$] (v4) at (0, -1) {};
\node[vertex,label=180:$2$] (v2) at ({-sqrt(3)/2},-0.5) {};
\node[vertex,label=90:$-1$] (v7) at (2, 0) {};
\node[vertex,label=-90:$-1$] (v6) at (2, -1) {};
\node[vertex,label=90:$-2$] (v0) at (3, 0) {};
\node[vertex,label=-90:$-2$] (v3) at (3, -1) {};
\node[vertex,label=90:$1$] (v1) at (1, -0.6) {};
\path[edge] (v4) -- (v5) {};
\path[edge] (v2) -- (v4) {};
\path[edge] (v5) -- (v2) {};
\path[edge] (v5) -- (v7) {};
\path[edge] (v6) -- (v4) {};
\path[edge] (v6) -- (v7) {};
\path[edge] (v7) -- (v0) {};
\path[edge] (v3) -- (v6) {};
\path[edge] (v0) -- (v3) {};
\path[ledge] (v6) -- (v0) {};
\path[ledge] (v3) -- (v7) {};
\path[edge] (v5) -- (v1) {};
\path[edge] (v1) -- (v6) {};
\path[edge] (v7) -- (v1) {};
\path[edge] (v1) -- (v4) {};
\end{tikzpicture}
}
\subcaptionbox{$\lambda = 1$}{
\begin{tikzpicture}[scale=1.2]
\tikzstyle{vertex}=[draw,circle,font=\scriptsize,minimum size=5pt,inner sep=1pt,fill=green!80!white]
\tikzstyle{edge}=[draw,thick,decoration={markings,mark=at position 0.55 with {\arrow{Stealth}}},postaction=decorate]
\node[vertex,label=90:$-1$] (v7) at (0, 0) {};
\node[vertex,label=-90:$-1$] (v5) at (0, -1) {};
\node[vertex,label=90:$1$] (v4) at (3, 0) {};
\node[vertex,label=-90:$1$] (v6) at (3, -1) {};
\node[vertex,label=0:$-1$] (v3) at ({sqrt(3)/2},-0.5) {};
\node[vertex,label=180:$1$] (v1) at ({3 - sqrt(3)/2},-0.5) {};
\node[vertex,label=180:$-1$] (v2) at ({-sqrt(3)/2},-0.5) {};
\node[vertex,label=0:$1$] (v0) at ({3 + sqrt(3)/2},-0.5) {};
\path[edge] (v7) -- (v4) {};
\path[edge] (v4) -- (v6) {};
\path[edge] (v6) -- (v5) {};
\path[edge] (v5) -- (v7) {};
\path[edge] (v7) -- (v3) {};
\path[edge] (v3) -- (v5) {};
\path[edge] (v7) -- (v2) {};
\path[edge] (v2) -- (v5) {};
\path[edge] (v6) -- (v1) {};
\path[edge] (v1) -- (v4) {};
\path[edge] (v6) -- (v0) {};
\path[edge] (v0) -- (v4) {};
\end{tikzpicture}
}

\subcaptionbox{$\lambda = 2$}{
\begin{tikzpicture}[scale=1.5]
\tikzstyle{vertex}=[draw,circle,font=\scriptsize,minimum size=5pt,inner sep=1pt,fill=green!80!white]
\tikzstyle{edge}=[draw,thick,decoration={markings,mark=at position 0.55 with {\arrow{Stealth}}},postaction=decorate]
\tikzstyle{ledge}=[draw,thick,decoration={markings,mark=at position 0.8 with {\arrow{Stealth}}},postaction=decorate]
\node[vertex,label=90:$2$] (v5) at (36:1) {};
\node[vertex,label=180:$2$] (v0) at ({36 + 1 * 72}:1) {};
\node[vertex,label={[xshift=-4pt]90:$2$}] (v4) at ({36 + 2 * 72}:1) {};
\node[vertex,label=180:$2$] (v3) at ({36 + 3 * 72}:1) {};
\node[vertex,label=-90:$2$] (v6) at ({36 + 4 * 72}:1) {};
\node[vertex,label=90:$1$] (v1) at ($ (v5) + (0:1) $) {};
\node[vertex,label=90:$1$] (v2) at ($ (v6) + (0:1) $) {};
\path[edge] (v0) -- (v5) {};
\path[edge] (v4) -- (v0) {};
\path[edge] (v3) -- (v4) {};
\path[edge] (v3) -- (v6) {};
\path[edge] (v5) -- (v4) {};
\path[edge] (v5) -- (v3) {};
\path[edge] (v0) -- (v3) {};
\path[edge] (v4) -- (v6) {};
\path[edge] (v6) -- (v0) {};
\path[edge] (v6) -- (v2) {};
\path[ledge] (v6) -- (v1) {};
\path[ledge] (v2) -- (v5) {};
\path[edge] (v1) -- (v5) {};
\end{tikzpicture}
}
\subcaptionbox{$\lambda = 2$}{
\begin{tikzpicture}[scale=1.5]
\tikzstyle{vertex}=[draw,circle,font=\scriptsize,minimum size=5pt,inner sep=1pt,fill=green!80!white]
\tikzstyle{edge}=[draw,thick,decoration={markings,mark=at position 0.55 with {\arrow{Stealth}}},postaction=decorate]
\tikzstyle{ledge}=[draw,thick,decoration={markings,mark=at position 0.8 with {\arrow{Stealth}}},postaction=decorate]
\node[vertex,label=90:$2$] (v5) at (36:1) {};
\node[vertex,label=180:$2$] (v0) at ({36 + 1 * 72}:1) {};
\node[vertex,label={[xshift=-4pt]90:$2$}] (v4) at ({36 + 2 * 72}:1) {};
\node[vertex,label=180:$2$] (v3) at ({36 + 3 * 72}:1) {};
\node[vertex,label=-90:$2$] (v6) at ({36 + 4 * 72}:1) {};
\node[vertex,label=90:$1$] (v1) at ($ (v5) + (0:1) $) {};
\node[vertex,label=90:$1$] (v2) at ($ (v6) + (0:1) $) {};
\path[edge] (v5) -- (v0) {};
\path[edge] (v0) -- (v4) {};
\path[edge] (v3) -- (v4) {};
\path[edge] (v3) -- (v6) {};
\path[edge] (v4) -- (v5) {};
\path[edge] (v5) -- (v3) {};
\path[edge] (v0) -- (v3) {};
\path[edge] (v4) -- (v6) {};
\path[edge] (v6) -- (v0) {};
\path[edge] (v6) -- (v2) {};
\path[ledge] (v6) -- (v1) {};
\path[ledge] (v2) -- (v5) {};
\path[edge] (v1) -- (v5) {};
\end{tikzpicture}
}
\subcaptionbox{$\lambda = 2$}{
\begin{tikzpicture}[scale=1.2]
\tikzstyle{vertex}=[draw,circle,font=\scriptsize,minimum size=5pt,inner sep=1pt,fill=green!80!white]
\tikzstyle{edge}=[draw,thick,decoration={markings,mark=at position 0.55 with {\arrow{Stealth}}},postaction=decorate]
\tikzstyle{sedge}=[draw,thick,decoration={markings,mark=at position 0.35 with {\arrow{Stealth}}},postaction=decorate]
\tikzstyle{ledge}=[draw,thick,decoration={markings,mark=at position 0.80 with {\arrow{Stealth}}},postaction=decorate]
\node[vertex,label={[yshift=-4pt]120:$2$}] (v6) at (150:0.5) {};
\node[vertex,label={[yshift=4pt]-60:$2$}] (v7) at (150+180:0.5) {};
\node[vertex,label={90:$1$}] (v0) at (0:1.5) {};
\node[vertex,label={0:$1$}] (v1) at (60:1.5) {};
\node[vertex,label={180:$1$}] (v2) at (120:1.5) {};
\node[vertex,label={90:$1$}] (v3) at (180:1.5) {};
\node[vertex,label={180:$1$}] (v4) at (240:1.5) {};
\node[vertex,label={0:$1$}] (v5) at (300:1.5) {};
\path[ledge] (v6) -- (v5) {};
\path[edge] (v5) -- (v7) {};
\path[edge] (v6) -- (v4) {};
\path[edge] (v4) -- (v7) {};
\path[edge] (v7) -- (v0) {};
\path[sedge] (v0) -- (v6) {};
\path[ledge] (v7) -- (v3) {};
\path[edge] (v3) -- (v6) {};
\path[ledge] (v7) -- (v2) {};
\path[edge] (v2) -- (v6) {};
\path[edge] (v7) -- (v1) {};
\path[edge] (v1) -- (v6) {};
\path[edge] (v6) -- (v7) {};
\end{tikzpicture}
}
\caption{Non-diregular base digraphs for Theorem~\ref{thm:theGrandMultiplierTheorem} with $\lambda \in \{-1, 1, 2\}$. The labels show entries of the unique (up to scalar multiplication)
eigenvector for the chosen eigenvalue $\lambda$.}
\label{ref:baseGraphsDifferentEvals}
\end{figure}
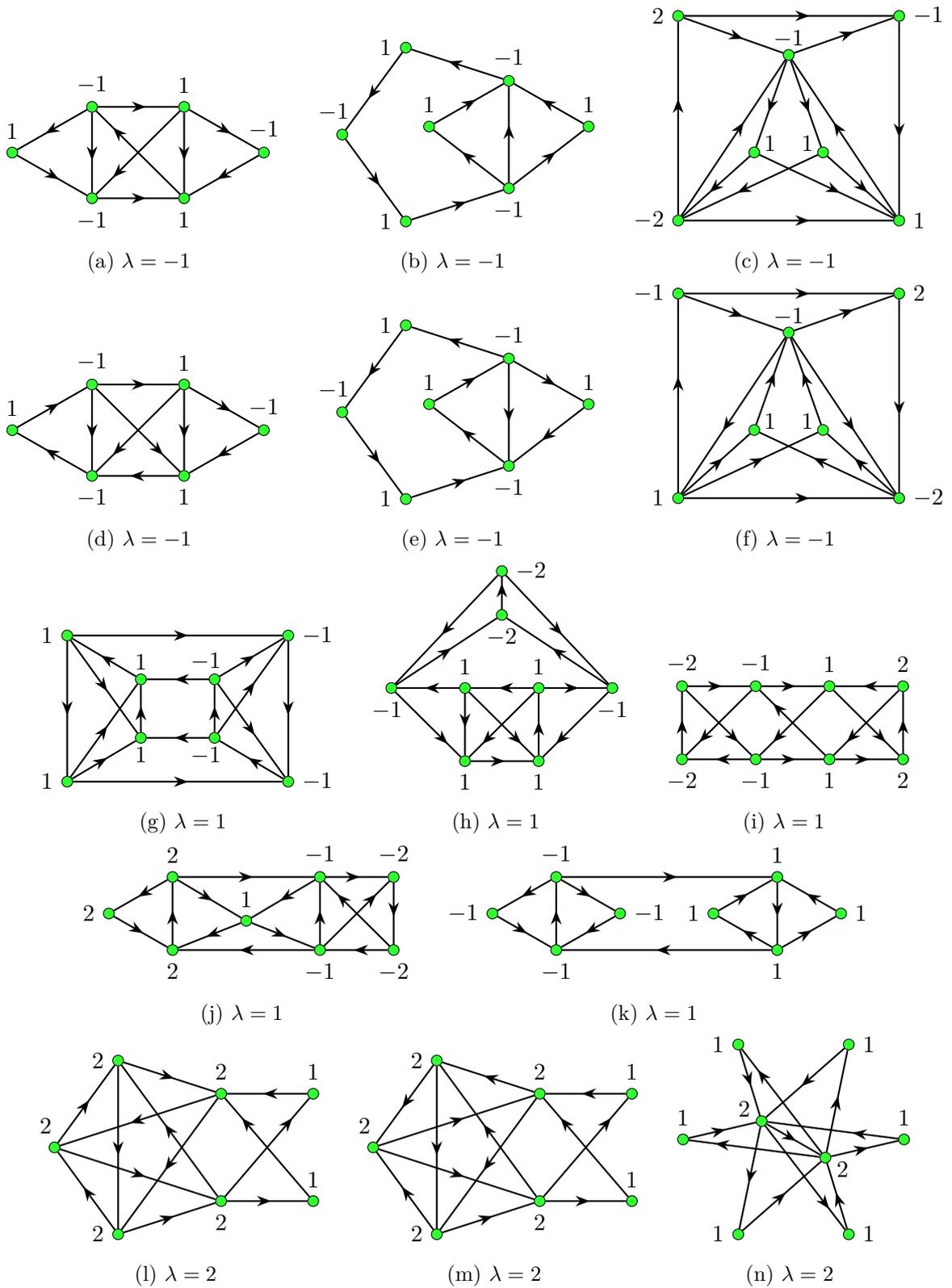

It is not difficult to see the following.

\begin{proposition}
\label{prop:everyDiregularStrongly}
Every connected $k$-diregular digraph ($k \geq 0$) is strongly connected.
\end{proposition}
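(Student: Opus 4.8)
The plan is to argue by contradiction, locating a set of vertices that is ``closed under out-neighbours'' and then using a degree count to show that such a set must in fact be isolated in the underlying graph. First I would dispose of the trivial case $k = 0$: a digraph with no arcs whose underlying graph is connected is $K_1$, which is (vacuously) strongly connected. So assume $k \ge 1$ and suppose, for contradiction, that $G$ is connected and $k$-diregular but not strongly connected.

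Since $G$ is not strongly connected, there is a vertex $b$ from which not every vertex of $G$ is reachable by a directed path. Let $W$ be the set of all vertices reachable from $b$ (with $b \in W$), and put $U = V(G) \setminus W$; then $U \neq \emptyset$ and, by construction, $W$ is closed under taking out-neighbours, i.e.\ no arc goes from a vertex of $W$ to a vertex of $U$. Now comes the key step, a double count. The number of arcs with tail in $W$ is $\sum_{v \in W} d^+(v) = k|W|$, and by the previous sentence every such arc has its head in $W$ as well; thus all $k|W|$ of these arcs lie among the arcs with head in $W$. On the other hand, the number of arcs with head in $W$ is $\sum_{v \in W} d^-(v) = k|W|$. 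Comparing the two counts, the arcs with both endpoints in $W$ already exhaust all arcs with head in $W$, so there is no arc from $U$ to $W$ either. Hence there is no arc between $U$ and $W$ in either direction, which means $W$ is a union of connected components of the underlying graph of $G$; since both $U$ and $W$ are non-empty, this contradicts the connectedness of $G$.

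I do not anticipate a genuine obstacle: the only points needing care are treating $k = 0$ separately and being precise in the double count (that the $k|W|$ ``internal'' arcs are exactly the arcs with tail in $W$ and are contained among the arcs with head in $W$, so equality of the two totals forces them to coincide). If one prefers to stay closer to the linear-algebraic language already used in the paper, the same argument can be run on the reducible block form of $A(G)$, exactly as in the proof of Theorem~\ref{thm:stronglyConnectedBi}: the zero lower-left block says no arc leaves the second part $W$, and the equality of row-sums and column-sums of $A(G)$ restricted to $W$ (all equal to $k$) forces the off-diagonal blocks linking $U$ and $W$ to both vanish.
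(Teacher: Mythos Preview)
Your proof is correct and follows essentially the same approach as the paper: both isolate a nonempty proper vertex set across which arcs can go in at most one direction, and then use the equality $\sum d^+ = \sum d^-$ over that set to conclude that no arcs cross in either direction, contradicting connectedness. The only cosmetic difference is that the paper obtains its set as a source strongly-connected component of the condensation (a DAG), whereas you take the out-reachability set of a vertex; your version is arguably a touch more direct since it avoids invoking the condensation.
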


\begin{proof}
Let $G$ be a connected $k$-diregular digraph and let $\widetilde{G}$ be the condensation of $G$.
Recall that the vertices of $\widetilde{G}$  are the strongly connected components $S_i$ of $G$ with $(S_i, S_j) \in E(\widetilde{G})$ if there exist vertices $u \in S_i$ and $v \in S_j$ such that $(u, v) \in E(G)$~\cite{Harary1971}. If $\widetilde{G}$ contains
exactly one vertex, there is nothing to prove, as the whole graph $G$ is a strongly connected component.
Suppose that $\widetilde{G}$ contains at least two vertices. As $\widetilde{G}$ is a DAG (directed acyclic graph),
there exists a vertex $S^* \in V(\widetilde{G})$ with $d^-(S^*) = 0$, which implies $$\sum_{\substack{v \to u \\ u \in S^*, v \notin S^*}} 1 = 0.$$ Since $G$ is connected, $d^+(S^*) > 0$, which implies
$$ \sum_{\substack{u \to v \\ u \in S^*, v \notin S^*}} 1 > 0.$$
For $U \subseteq V(G)$, let us define
\begin{equation}
\Delta(U) = \sum_{u \in U} d^+(u) - \sum_{u \in U} d^-(u).
\end{equation}
Since $G$ is diregular, we obtain $\Delta(S^*) = 0$.
On the other hand, we can write
\begin{equation}
\label{eq:deltaFunctionProp}
\Delta(U) = \sum_{\substack{u \to v \\ u \in U}} 1 \;\;-\;\; \sum_{\substack{v \to u \\ u \in U}} 1 = 
\sum_{\substack{u \to v \\ u, v \in U}}  \hspace{-0.3em} 1\;\; + \sum_{\substack{u \to v \\ u \in U, v \notin U}} \hspace{-0.8em} 1 \;\; - \sum_{\substack{v \to u \\ u, v \in U}} \hspace{-0.3em} 1 \;\; -  \sum_{\substack{v \to u \\ u \in U, v \notin U}} \hspace{-0.8em} 1 = 
{\sum_{\substack{u \to v \\ u \in U, v \notin U}} \hspace{-0.8em} 1}\;\;  -  {\sum_{\substack{v \to u \\ u \in U, v \notin U}} \hspace{-0.8em}  1}.
\end{equation}
Choosing $U = S^*$ in \eqref{eq:deltaFunctionProp} gives that $\Delta(S^*) > 0$, a contradiction. Therefore, $\widetilde{G}$ cannot
have two or more vertices.
\end{proof}

By Proposition~\ref{prop:everyDiregularStrongly}, digraphs from (i) and (ii) above are strongly regular, i.e., have irreducible
adjacency matrices and the Perron-Frobenius theory for non-symmetric matrices applies. If $G$ is a $k$-diregular digraph, then the spectral radius $\rho(A(G)) = k$ by \cite[Theorem~8.3.6]{BrualdiCvetkovicBook}.
Moreover, by~\cite[Theorem~8.3.4]{BrualdiCvetkovicBook}, $G$ has the Perron eigenvalue $k$ with (algebraic and geometric) multiplicity $1$. The corresponding eigenspace is, in fact,
spanned by the all-ones vector. Finally, by \cite[Theorem~3.1]{Brualdi2010}, the spectrum of a bipartite digraph is invariant under multiplication by $-1$.

Theorem~\ref{thm:theGrandMultiplierTheorem} also applies to undirected graphs (viewed as digraphs where for every arc the opposite arc
is also present) and hence generalises Propositions 17--19 from \cite{NutOrbitPaper}. For convenience, we give the undirected versions of Definition~\ref{def:theMagicVector} and Theorem~\ref{thm:theGrandMultiplierTheorem}:

\begin{definition}
\label{def:theMagicVectorUndirected}
An \emph{undirected gadget} is a pair $(G, r)$, where $G$ is a graph and $r \in V(G)$, with the property that there exists a full vector $\x$ that spans the 
($1$-dimensional) kernel of $A(G)_{(r)}$. The vertex $r$ will be called the \emph{root}.
\end{definition}

Note that the triangle and pentagon graphs used in Section 4.2 of~\cite{NutOrbitPaper} are, in fact, undirected gadgets with respective demands $2$ and $-2$.
The various bouquets used in that section correspond to composite undirected gadgets.

\begin{theorem}
\label{thm:theGrandMultiplierTheoremUndirected}
Let $G$ be the base graph, i.e.\ a graph of order $n$ with $V(G) = \{v_1, v_2, \ldots, v_n\}$ and an integer eigenvalue $\lambda$ such that the ($1$-dimensional) $\lambda$-eigenspace of $G$ is spanned 
by a full vector~$\x$.
Let $(\Gamma_1, r_1), (\Gamma_2, r_2), \ldots, (\Gamma_n, r_n)$ be undirected gadgets, such that $\dem (\Gamma_i, r_i) = \lambda$ for all $i = 1, \ldots, n$.
Then the graph $M$ obtained from the disjoint union $G \sqcup \Gamma_1 \sqcup \Gamma_2 \sqcup \cdots \sqcup \Gamma_n$ by identifying $v_i$ with $r_i$ for all $i = 1,\ldots, n$
 is a nut graph.
\end{theorem}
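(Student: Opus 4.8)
The plan is to deduce Theorem~\ref{thm:theGrandMultiplierTheoremUndirected} from its directed counterpart, Theorem~\ref{thm:theGrandMultiplierTheorem}, by regarding every undirected graph as a digraph in which each edge is replaced by a pair of oppositely oriented arcs. Write $G'$, $\Gamma_i'$ and $M'$ for the digraphs obtained in this way from $G$, $\Gamma_i$ and $M$. Since disjoint union and identification of a vertex of one object with a vertex of another commute with passing to the symmetric closure of the adjacency relation (exactly as in the remark that coalescence of graphs agrees with the coalescence of digraphs when the relations are symmetric and irreflexive), the digraph $M'$ is precisely the digraph produced by the construction of Theorem~\ref{thm:theGrandMultiplierTheorem} when it is applied to the base digraph $G'$ and the gadgets $\Gamma_1',\ldots,\Gamma_n'$. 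Moreover $M'$ is irreflexive and symmetric, i.e.\ it is a digraph which is a (simple) graph, and its underlying graph is $M$.

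First I would check that $G'$ and the $\Gamma_i'$ satisfy the hypotheses of Theorem~\ref{thm:theGrandMultiplierTheorem}, with the same numerical data. The adjacency matrix $A(G') = A(G)$ is symmetric, so $A((G')^R) = A(G')^\intercal = A(G')$; hence the $\lambda$-eigenspaces of $G'$ and $(G')^R$ coincide, and by hypothesis this common space is $1$-dimensional and spanned by the full vector $\x$. For each undirected gadget $(\Gamma_i, r_i)$, symmetry of $A(\Gamma_i)$ gives $\Ker A(\Gamma_i)_{(r_i)} = \Ker A(\Gamma_i^R)_{(r_i)}$, because the defining conditions $\Su_{\x}^+(u)=0$ and $\Su_{\x}^-(u)=0$ for $u \neq r_i$ are literally the same: the in- and out-neighbourhoods agree. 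Thus $(\Gamma_i', r_i)$ is a gadget in the sense of Definition~\ref{def:theMagicVector}; and since $\Su_{\x}^+(r_i) = \Su_{\x}^-(r_i)$ trivially, its demand $-\Su_{\x}^+(r_i)/\x(r_i)$ equals the undirected demand, namely $\lambda$.

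Now I would invoke Theorem~\ref{thm:theGrandMultiplierTheorem}: the digraph $M'$ is an ambi-nut digraph, so $\eta(M') = 1$ and $\Ker M' = \CoKer M'$ is spanned by a full vector. But each generalisation of the notion of nut graph introduced in the paper was designed so that it coincides with the standard definition of a nut graph when applied to a digraph which is a graph; since $M'$ is such a digraph, its being ambi-nut means exactly that its underlying graph $M$ is a nut graph. Concretely, $A(M') = A(M)$, so $\Ker M = \Ker M'$ is $1$-dimensional and spanned by a full vector, which is the definition of a nut graph (with $M \neq K_1$, as $M'$ being ambi-nut already places it outside the trivial case). This completes the argument.

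I do not expect a genuine obstacle here: the statement is essentially a translation, and the only thing requiring care is the bookkeeping that verifies "undirected object viewed as a digraph" satisfies the directed hypotheses with the same $\lambda$, the same full spanning vector, and the same demand — and the eigenspace/kernel-space identities needed for that are immediate from symmetry of the adjacency matrices. As an alternative to this reduction, one could re-run the proof of Theorem~\ref{thm:theGrandMultiplierTheorem} verbatim in the undirected setting, using symmetry to identify every occurrence of $\Ker$ with the corresponding $\CoKer$ and thereby suppress the redundant half of each step; this gives the same conclusion but is less economical than the reduction above.
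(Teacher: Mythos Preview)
Your proposal is correct and follows precisely the route the paper itself indicates: the text preceding Theorem~\ref{thm:theGrandMultiplierTheoremUndirected} states that ``Theorem~\ref{thm:theGrandMultiplierTheorem} also applies to undirected graphs (viewed as digraphs where for every arc the opposite arc is also present)'', and the undirected theorem is then stated without further proof. Your write-up supplies the bookkeeping that the paper leaves implicit --- checking that the directed hypotheses are met with the same $\lambda$, spanning vector, and demand, and that ambi-nut for a symmetric digraph is the same as nut for the underlying graph --- and this is exactly the intended argument.
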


\section{Perspective}

To give an indication of how digraphs and hence nut digraphs may have applications to physics and chemistry, 
we first note some applications of graphs. 
Adjacency matrices have many applications to physics and chemistry, where it is often appropriate to use a system of linear equations. 
For example, in the H\"{u}ckel (tight-binding) model of $\pi$ systems,
 linear combinations of basis functions on atomic sites (vertices of the molecular graph)
are used to form molecular orbitals, which are then used to construct a many-electron wavefunction for the physical state of the system. 
Interactions are limited to adjacent sites, and in the simplest cases the one-electron Hamiltonian determining the molecular orbitals 
reduces to a scaled and shifted adjacency matrix of the molecular graph, and $\pi$ electron distribution and $\pi$ energy 
follow from the eigenvectors and eigenvalues of this matrix.

Adjacency matrices also arise in approximate descriptions of photonic, optomechanical, or phononic devices. 
In these systems a set of oscillatory modes (for example, a photon at resonant frequency in a cavity) defines a basis. 
Modes may be coupled to one another (for instance, due to the spatial proximity of cavities) with couplings defining edges. 
Eigenvalues then correspond to the resonant frequencies of the entire coupled system, and eigenvectors to the spatial profile of the many-mode wavefunctions at that frequency.

Typically $H$ for an isolated system is Hermitian to ensure real energy eigenvalues (a requirement of the postulates of quantum mechanics), 
but connecting the system to a larger environment in a controlled non-conservative way allows an effective non-Hermitian matrix 
(including directional arc weights~\cite{ReservoirEngineeringPaper, MicrowaveNonreciprocal}) to model the system \cite{HatanoNelson,HatanoNelson2,NonHermitianSSHmodel,BosonicKitaevChain} and is readily realisable experimentally \cite{ArbitraryBosonicNetworks_Exps,slim2024optomechanical,OptomechanicalSqueezingNHexps,li2024observation,zhang2021observation}.  
This generalisation implies that many chemical and physical applications may be usefully formulated in terms of digraphs.

In topological physics, non-Hermitian model Hamiltonians may undergo so-called \textit{topological phase transitions}, 
where Hamiltonians are split into distinct equivalence classes, and may be mapped from one class to another 
by some continuous adjustment of system parameters 
(for example, altering arc weights of a digraph to change the parity of the number of negative-eigenvalue 
states \cite{ClassificationTable1}). 
In some approaches to the classification of such phases, ambi- and bi- nut digraphs may describe systems that 
have two distinct topological phases \cite{ClassificationTable1, Classification2, Classification3} and 
(with unit arc weights) 
may lie on a topological phase boundary. 
This connection gives a strong hint that nut digraphs may also be applied to the classification of topological phases. 

Ambi- and bi-nut digraphs also have potential applications in chemical physics, for example in models of molecular conduction. 
In the eponymous source-and-sink model, where a device in a circuit is modelled by connection of 
vertices of the molecular graph to special source and sink vertices, graph nullity is a crucial factor. 
In particular, a molecular graph can be a \emph{strong omni-conductor}, such that
connection to any vertex or pair of vertices leads to a prediction of transmission 
at the Fermi level~\cite{SSPmodel,Omniconductors}.
The nut graphs are exactly the strong omni-conductors of nullity 1, as the property
depends on the fact that all first minors of the adjacency matrix are non-zero, 
a requirement fulfilled by all ambi- and bi-nut digraphs. (By Corollary~\ref{lem:extendedSciriha}, all
principal minors are non-zero, but this result is easily extended to all first minors.)

Laevo- and dextro-nut digraphs could be useful in the description of novel physical phenomena. Left and right null-vectors are not simultaneously 
full in such digraphs, and the Greens function of the adjacency matrix \cite{DoubledGreenFunctionsExample, FormalGreensFunctions} 
 indicates that the digraph will display (asymmetric) directional transport.
Furthermore, such digraphs are prime candidates for exhibiting a form of \textit{topological protection} 
(which has been experimentally observed for undirected graphs in \cite{LinearPaper} and will be subjected to further theoretical exploration). 

Our discussion indicates that applications of
laevo- and dextro-nut digraphs will be distinct from those of  ambi- and bi-nut digraphs, 
but we expect that the construction of larger digraphs using the various forms of nut digraphs as building blocks  will result in systems 
with novel physics. 

We suspect that there are many more applications of nut digraphs, beyond the few outlined above.

\section*{Acknowledgements}

We thank Dr James Tuite (The Open University) for asking a stimulating question at the IWONT2023 
meeting held at the International Centre for Mathematical Sciences, Edinburgh, July 2023.
The work of Nino Bašić and Primo\v{z} Poto\v{c}nik is supported in part by the Slovenian Research Agency (Research Program P1-0294).
Maxine McCarthy thanks the EPSRC for a PhD studentship (Project Reference 2482664),
and Dr Clara Wanjura (MPL) for useful discussions on non-Hermitian physics.
Patrick Fowler thanks the Leverhulme Trust for an Emeritus Fellowship on the theme of {\lq Modelling molecular currents, conduction and aromaticity\rq}.

\end{document}